\numberwithin{equation}{section}
\numberwithin{figure}{section}
\newtheorem{thm}{Theorem}
\newtheorem{lem}{Lemma}
\newcommand{\wh}{\widehat}
\newcommand{\wt}{\widetilde}
\newcommand{\argmin}{\mathop{\rm arg\min}}
\newcommand{\argmax}{\mathop{\rm arg\max}}
\newcommand{\MM}{B}
\newcommand{\pbleach}{p_{\mathrm{b}}}
\newcommand{\data}{\mathbf{X}^k}
\newcommand{\Bin}{\operatorname{Bin}}
\newcommand{\Var}{\operatorname{Var}}
\newcommand{\E}{\mathbb{E}}
\renewcommand{\P}{\mathbb{P}}
\newcommand{\TV}{\operatorname{TV}}
\newcommand{\Beta}{\operatorname{Beta}}
\begin{document}

\begin{frontmatter}

\title{Posterior analysis of $n$ in the binomial $(n,p)$ problem with both
  parameters unknown - with applications to quantitative nanoscopy}
\runtitle{Posterior analysis in the binomial model}



\author{\fnms{Johannes} \snm{Schmidt-Hieber}\thanksref{a2,x1},
        \ead[label=e2]{a.j.schmidt-hieber@utwente.nl}}
\author{\fnms{Laura Fee} \snm{Schneider}\thanksref{a1, x1},
        \ead[label=e1]{laura-fee.schneider@mathematik.uni-goettingen.de}}
\author{\fnms{Thomas} \snm{Staudt}\thanksref{a1, x1},
        \ead[label=e3]{thomas.staudt@uni-goettingen.de}}
\author{\fnms{Andrea} \snm{Krajina}\thanksref{a1},
        \ead[label=e4]{andrea.krajina@mathematik.uni-goettingen.de}}
\author{\fnms{Timo} \snm{Aspelmeier}\thanksref{a1},
        \ead[label=e5]{timo.aspelmeier@mathematik.uni-goettingen.de}}
\author{\fnms{Axel} \snm{Munk}\thanksref{a1}
        \ead[label=e6]{munk@math.uni-goettingen.de}}

\affiliation{University of G\"ottingen\thanksmark{a1} and University of Twente\thanksmark{a2}}

\thankstext{x1}{These authors have contributed equally to this work}

\runauthor{Schmidt-Hieber et al.}
\end{frontmatter}

\thispagestyle{empty}

\begin{center}
\begin{minipage}{0.80\textwidth}\footnotesize
Estimation of the population size $n$ from $k$ i.i.d.\ binomial
observations with unknown success probability $p$ is relevant to a multitude of
applications and has a long history. Without additional prior information this
is a notoriously difficult task when $p$ becomes small, and the Bayesian
approach becomes particularly useful. For a large class of priors, we establish
posterior contraction and a Bernstein-von Mises type theorem in a setting where
$p\rightarrow0$ and $n\rightarrow\infty$ as $k\to\infty$. 
Furthermore, we suggest a new class of Bayesian estimators for $n$ and provide
a comprehensive simulation study in which we investigate their performance.
To showcase the advantages of a Bayesian approach on real data, we also benchmark
our estimators in a novel application from super-resolution microscopy.
\end{minipage}
\end{center}

\vspace{1ex}

\noindent
\begin{minipage}{\textwidth}\footnotesize
\emph{MSC 2010 subject classifications:} primary 62G05; secondary 62F15,62F12, 62P10, 62P35\\[1ex]
\emph{Keywords:} Bayesian estimation, posterior contraction, Bernstein-von Mises theorem, binomial distribution, beta-binomial likelihood, quantitative cell imaging
\end{minipage}

\vspace{1ex}

%
%

\section{Introduction}\label{S:Intro} The binomial distribution
with parameters $n$ and $p$ is the most fundamental model for the
repetition of independent success/failure events. Motivated by several important
applications, we focus on the situation where both $p$ and $n$ are unknown. For
example, $n$ might corresponds to the population size of a certain species
\citep{Otis, Royle2004, Raftery}, the number of defective appliances \citep{DG},
or the number of faults in software reliability \citep{Basu}. In
Section~\ref{S:DataExample} we elaborate on a novel application where $n$ is the
number of unknown fluorescent markers in quantitative super-resolution
microscopy \citep{Betzig, Hell, AEM2015}. 

The joint estimation of the population size $n$ and the success probability $p$
of a binomial distribution from $k$ independent observations has a long history
dating back at least to \citet{Fisher}. In comparison to the estimation of one
of the parameters when the other is known \citep{Lehmann}, this problem turns
out to be much harder. 
Fisher, who regarded the assumption of an unknown integer $n$ as ``entirely
academic'', suggested the use of the sample maximum, arguing that this estimator
is necessarily good if the sample size is sufficiently large. Indeed, if
$X_1,\dots, X_k$ are i.i.d.\ $\Bin(n,p)$ distributed random variables for fixed
$n\in\mathbb{N}$ and $p\in (0, 1)$, the sample maximum $M_k := \max_{i=1,\ldots,
k}X_i$ converges exponentially fast to $n$ as $k\rightarrow \infty,$ since
\begin{equation}\label{max_exp_fast}
	\mathbb{P}\big( M_k = n \big)
	= 1-\mathbb{P}\big( \max_{i=1, \ldots, k} X_i <n \big)
	=1-(1-p^{n})^k.
\end{equation}
In practice, however, the regime with small $p$ (``rare events'') is often the
relevant one (see the references below and Section~\ref{S:DataExample}). In this
setting, the sample maximum strongly underestimates the true $n$ even for
large sample sizes $k.$ This is
explicitly quantified in \cite{Gupta}: if $p=0.1$ and $n=10$, then the sample
size $k$ needs to be larger than $3635$ to ensure $\mathbb{P}(M_k \ge n/2) \ge
1/2$. If $p=0.1$ and $n=20$, one would even need a sample size of more than
$k=900\,000$ for the same probability.

The erratic behavior of the sample maximum can be explored by allowing the
parameters $n$ and $p$ to depend on $k$. Applying Bernoulli's inequality and the
bound $1-x \leq e^{-x}$, it follows from \eqref{max_exp_fast} that
$1-e^{-kp^{n}} \leq \mathbb{P}(M_k = n) \leq kp^{n}$. Therefore, the sample
maximum $M_k$ becomes an inconsistent estimator of $n$ if $kp^{n}\rightarrow 0$
as $k\to\infty$ (see Lemma~\ref{Consistency} for a characterization of domains
of consistency and inconsistency of $M_k$).
One particular example where consistency breaks down is the domain of attraction
of the Poisson distribution: when $n \geq \log(k) \rightarrow \infty$ and $p
\rightarrow 0$ such that $np \rightarrow \mu \in (0,\infty)$, then $kp^{n}\leq
k^{1 + \log p}\to 0$. In this case, $\Bin(n,p)$ approaches the Poisson distribution
with intensity parameter $\mu$, leading to non-identifiability of the parameters
$(n,p)$ in the limit. Consequently, more refined estimation techniques become necessary.

Since \citet{Fisher}, a variety of methods have been proposed to improve upon
the sample maximum. A definite answer, however, remains elusive until today. The general
lesson from the attempts to obtain better estimators in the small $p$ regime is
that further information on $n$ and $p$ is required,
which calls for a Bayesian approach. An early Bayesian estimator of the binomial
parameters dates back to \cite{DG}, who suggested to use the posterior mode
under a uniform prior for $n$ (upper bounded by some maximal value), and
a $\Beta(a,b)$ prior for $p$ with $a,b>0$. Later, \cite{Raftery}, \cite{Chilko},
\cite{Hamedani88}, and \cite{Berger}, besides others, considered different
Bayesian estimators. \cite{Raftery}, for example, introduced a hierarchical
Bayes approach that utilizes a Poisson prior on $n$ with intensity parameter
$\lambda > 0$ and a uniform prior distribution on $[0, 1]$ for $p$. Under the choice
$\pi(\lambda) \sim 1/\lambda$ as hyperprior, this hierarchical approach is equivalent to
choosing the (improper) scale prior $1/n$ for $n$. This prior is also recommended as
an objective prior for $n$ in \cite{Berger}. A broader perspective on objective priors
for discrete parameter spaces is offered in \cite{villa2014, villa2015}, who propose a
prior on $n\,|\,p$ that depends on the Kullback-Leibler divergence between two
successive values of $n$. The Villa-Walker construction therefore also models the
dependency between $n$ and $p.$

Besides considering the posterior
mode and posterior median as estimators, \cite{Raftery} suggested to minimize
the Bayes risk with respect to the relative quadratic loss. From extensive
simulation studies (see the aforementioned references and
Section~\ref{S:SimulationStudy} of this article), it is understood that these
Bayesian estimators generally deliver good results, especially when compared to
frequentist approaches. To the best of our knowledge, however, there is no
rigorous theoretical underpinning of these findings. In particular, little is
known about the posterior concentration of such estimators, and no systematic
understanding of the role of the prior has been established. 

Our contribution to this topic is threefold. First (i), we propose a new class
of Bayesian estimators for $n$, generalizing the approach in \cite{Raftery}.
Secondly (ii), we analyze the asymptotic behavior of the posterior distribution
of $n$ for a large class of priors and asymptotic regimes. This includes
statements of posterior consistency as well as a novel Bernstein-von Mises type
theorem.
Finally (iii), we extend the i.i.d.\ $\Bin(n,p)$ model to a regression setting
and apply the suggested estimators to count the number of fluorophores from
super-resolution images. This is a difficult issues of quantitative biology
and a target of ongoing research.

\paragraph*{Ad (i)}
We consider product priors of the form $\Pi_n\otimes \Pi_p$ on $(n,p)$ with
$\Pi_p\sim\Beta(a,b)$ for some
$a,b>0$ and $\Pi_n(n)\propto n^{-\gamma}$ for all $n\in\mathbb{N}$ and
some $\gamma>1$. Independence of $n$ and $p$ in the prior is a natural
assumption and can be justified in our application based on physical
considerations (Section~\ref{S:DataExample}). The beta prior for $p$
is the standard choice and makes the problem analytically tractable
due to its conjugacy property \citep{DG}. The priors $n^{-\gamma}$
for $n$, which we call scale priors with scaling parameter $\gamma$,
are widely studied in the literature on the binomial $(n,p)$ problem
and its variations,
see \citep{Raftery, Berger, Link, MR2301466, MR4122517}.

Based on these prior choices, we focus on two Bayesian scale estimators for $n$.
The first is the posterior mode estimator $\hat{n}_\mathrm{pm}$ and the second
is the Bayes estimator $\hat{n}_{\mathrm{rql}}$ with respect to the relative
quadratic loss, $\ell(x,y)=(x/y-1)^2$. Following \cite{Raftery}, the respective
estimators are given by
\begin{subequations}
\begin{align}
  \hat{n}_\mathrm{pm}
    &= \argmax_{n \ge M_k} \frac{L_{a, b}(n)}{n^\gamma}, \label{eq:scale_pm} \\
  \hat{n}_\mathrm{rql}
    &= \frac{\E \left[\frac{1}{n}|\data\right]}{\E \left[\frac{1}{n^2}|\data\right]}
    = \frac{\sum_{n=M_k}^\infty \frac{1}{n^{1+\gamma}}L_{a,b}(n)}
      {\sum_{n=M_k}^\infty \frac{1}{n^{2+\gamma}}L_{a,b}(n)}, \label{eq:scale_rql}
\end{align}
\label{eq:scale}%
\end{subequations}
where $\data = (X_1, \ldots, X_k)$ denotes the data vector and $L_{a,b}(n)$
is the (data dependent) beta-binomial likelihood defined in
equation~\eqref{E:posteriorN} below (see also \cite{Carroll}). In our
applications (Section~\ref{S:SimulationStudy} and \ref{S:DataExample}),
we assume $\hat{n}_\mathrm{pm}$ and $\hat{n}_\mathrm{rql}$ to be integer
valued by taking the $\argmax$ over $\mathbb{N}$ and by rounding
$\hat{n}_\mathrm{rql}$ to the nearest integer.

\paragraph*{Ad (ii)}
We provide asymptotic conditions under which the marginal posterior for $n$
concentrates all mass around the true population size. As before, we assume
product priors on $(n,p)$ with a beta prior on $p$. For $n$, we allow
general proper priors that decay at most polynomially,
\begin{equation}\label{a}
  \Pi_n(n) \geq \beta n^{-\alpha},
\end{equation}
for all $n\in\mathbb{N}$ and some $\alpha > 1$ and $\beta > 0$.
To investigate the asymptotic behavior of the posterior
distribution, we let $n$ and $p$ depend on the sample size $k$.
We formalize this by considering parameter domains of the form
\begin{align}\label{eq:mathcalM}
  \mathcal{M}_k(\lambda) := \left\{ (n, p)\,\colon\,
    \frac 1{\lambda} \leq np\leq\lambda, \  n \leq \lambda\,
    \frac{\sqrt{k}}{\log^6(k)} \right\},
\end{align}
where $\lambda > 1$ can be chosen arbitrarily. This class describes binomial
variables $\mathrm{Bin}(n, p)$ with expectation values $np$ bounded away from
$0$ and infinity, such that $n$ grows (at most) slightly slower than $\sqrt{k}$.
Under the condition that $\Pi_n$ satisfies \eqref{a}, posterior contraction
around the true population size $n_0$ is studied in Theorem~\ref{result}.
If $n_0$ does not grow faster than $k^{1/4}/\log(k)$, we will see that the
posterior mass eventually concentrates on the true $n_0$. In
Theorem~\ref{thm:bounded_n}, we then extend our analysis to a different
asymptotic domain in which the true population size $n_0$ stays bounded but
$p_0$ is allowed to decay. Lower bounds that we establish in
Theorem~\ref{thm:lower_bound} guarantee that the rates for consistency in
Theorem~\ref{result} and \ref{thm:bounded_n} are indeed sharp up to logarithmic
factors. We also derive a Bernstein-von Mises type result for the posterior on
$n$ in Theorem~\ref{thm.BvM}, which shows that the limit distribution can be
viewed as a discretized version of a normal distribution.

The main building block underlying the recent advances in the frequentist
analysis of posterior concentration are the connection to posterior mass
conditions and the existence of separating statistical test, see \cite{Schwartz,
Ghosal-2000, Ghosal-book}.
To establish model selection properties of the posterior requires typically
different tools \citep{Castillo2012, Castillo2015, Gao}. Since proving that the
posterior concentrates on the true population size can be viewed as posterior
model selection, it is not surprising that we do not follow the standard
posterior contraction proof technique.
In fact, a much more refined analysis of the likelihood is necessary and we
crucially rely on a decomposition of the log-likelihood via a telescoping sum
that is due to \cite{Hall}.
The main challenge in our approach consists of obtaining uniform results over
parameter classes where $n \to \infty$ and $p \to 0$ is allowed (in order to
capture the small $p$ regime).  For fixed $n$ and $p$ as $k\to\infty$, in
contrast, posterior consistency already follows from Doob's consistency theorem,
see \cite{Doob}.

\paragraph*{Ad (iii)}
Modern cell microscopy allows researchers to observe the activity and
interactions of biomolecules in unprecedented detail. Especially since the
development of super-resolution nanoscopy, for which the 2014 Nobel Prize in
Chemistry was awarded, it has become an indispensable tool for understanding the
biochemical function of proteins (see \cite{Hell2015} for a survey).
Super-resolution techniques rely on photon counts obtained from fluorescent
markers (or fluorophores), which are tagged to the specific protein of interest
and excited by a laser beam.
In this article, we are concerned with single marker switching (SMS) microscopy
\citep{Betzig, Rust, Hess, Folling} where the activation of fluorophores and the
emission of photons is inherently random: after excitation by a laser,
a fluorophore undergoes a complicated cycling through (typically unknown)
quantum mechanical states on different time scales. This severely hinders
a precise determination of the number of molecules at a certain spot in the
specimen, see, e.g., \cite{Lee}, \cite{Rollins}, \cite{AEM2015},
\cite{Staudt2020}. In Section~\ref{S:DataExample} we show how the number of
fluorophores can be obtained from a modified binomial $(n,p)$ model. A common
difficulty in such experiments is that the number of active markers decreases
over the measurement process due to bleaching effects.  We show that the initial
number $n_0$ can still be inferred from observations at later time points by
linking them through an exponential decay. This leads to a variant of the
binomial $(n, p)$ model where the bleaching probability of a fluorophore can be
estimated jointly with $n_0$. We apply this model to experimental data and
determine the number of fluorophores on DNA origami test beds.

\paragraph{Outline}
This paper is organized as follows. Our results on posterior contraction and the
Bernstein-von Mises type theorem can be found in
Section~\ref{S:PosteriorContraction}. For a broader perspective, we also discuss
previous results on the asymptotics of several frequentist estimators for $n$.
Section~\ref{S:SimulationStudy} contains an extensive simulation study in which
we examine the posterior of $n$ for moderate to large $k$ and compare the finite
sample properties of several Bayesian and frequentist estimators. Furthermore,
we study the choice of suitable scale priors in different settings and
investigate robustness against model deviations from the Bin$(n,p)$ model.
In Section~\ref{S:DataExample}, we apply our estimators to data from
super-resolution microscopy.
The proof of our main posterior contraction result (Theorem~\ref{result})
and some auxiliary results about binomial random variables are collected in
Section~\ref{S:Proof}. Further proofs as well as additional figures are
deferred to the supplementary material.

\section{Asymptotic results}\label{S:PosteriorContraction}

Recall that we observe $k$ independent  random variables $X_1,\dots,X_k$ with
$\Bin(n,p)$ distribution. We refer to this setting as the binomial $(n,p)$
model.  The joint distribution of the data $\data =(X_1,\ldots, X_k)$ is denoted
by $\P_{n,p}$ and the expectation with respect to this distribution is
$\E_{n,p}$.  We study product priors $\Pi_n \otimes \Pi_p$  on ($n$,$p$) and set
$\Pi_p =\Beta(a,b)$ with parameters $a,b>0$. The prior $\Pi_n$ for $n$ can be
chosen as any proper probability distribution on the positive integers such that
condition \eqref{a} holds for some $\alpha > 1$ and $\beta>0$. We write $M_k
= \max_{i=1, \ldots, k} X_i$ for the sample maximum and $\smash{S_k
= \sum_{i=1}^k X_i}$ for the sample sum. The true parameter values are denoted
by $n_0$ and $p_0$.

For a measurable set $A\subseteq [0,1]$ and $n\in\mathbb{N}$, the joint posterior
distribution for $(p,n)$ is given by
\begin{align*}
  &\Pi\big( p \in A, n \, | \, \data \big)
    = \\
  &\quad\qquad\qquad\qquad\frac{\int_{A}
    t^{S_k +a-1}(1-t)^{kn- S_k+b-1} \mathrm{d}t \cdot 
      \prod_{i=1}^k \binom{n}{X_i}\cdot \Pi_n(n)}{\sum_{m=1}^{\infty}
      \int_0^1 t^{S_k +a-1}(1-t)^{km- S_k+b-1} \mathrm{d}t \cdot
      \prod_{i=1}^k \binom{m}{X_i}\cdot \Pi_n(m)}
\end{align*}
if $n \ge M_k$ and $\Pi( p \in A, n \, | \, \data )=0$ otherwise. The marginal
posterior distribution of $n$ is thus
\begin{equation}\label{E:posteriorN}
  \Pi\big( n \, | \, \data  \big)
    \propto \underbrace{\prod_{i=1}^k \binom{n}{X_i}
    \frac{\Gamma(kn- S_k+b)\,\Gamma( S_k +a)}{\Gamma(kn+a+b)}\,
    \mathbf{1}( n \geq M_k )}_{=:\,L_{a,b}(n)} \Pi_n(n),
\end{equation}
where $\Gamma$ is the Gamma function, $\mathbf{1}$ the indicator function, and
$L_{a,b}$ the beta-binomial likelihood.

\paragraph{Posterior contraction}
Our first result establishes uniform posterior concentration around the true
value $n_0$ over parameters in the set $\mathcal{M}_k(\lambda)$ defined in
equation \eqref{eq:mathcalM}.

\begin{thm}\label{result}
Consider the binomial $(n,p)$ model under the prior mass condition \eqref{a}.
For fixed $\lambda>1$ and $k \to \infty$,
\begin{equation}\label{E:Result_sup}
  \sup_{(n_0,p_0)\in\mathcal{M}_k(\lambda)} \E_{n_0,p_0}\!\left[
  \Pi\left( n: \left|n - n_0\right| \ge  \frac{n_0^2 \, \log^{7/4}(k)}{\sqrt{k}}\
  \bigg|\, \data \right) \right] \rightarrow 0.
\end{equation}
\end{thm}

Equivalently, this result could also be stated in terms of the relative loss
$\ell(n,n_0)=|n/n_0-1|^2$, which is widely studied in the Bayesian
literature for this and related problems, see \cite{Smith1988}.
A noteworthy consequence of Theorem \ref{result} is that the posterior of $n$
eventually places all mass on the true population size $n_0$ if the parameters
$(n_0, p_0)\in \mathcal{M}_k(\lambda)$ additionally satisfy
\begin{equation}\label{eq:posterior-consistency-condition}
    n_0^2 < \frac{\sqrt{k}}{\log^{7/4}(k)}.
\end{equation}

An inspection of the proof of Theorem~\ref{result} reveals that the lower
bound on the prior mass condition \eqref{a} only has to hold for the true
value $n_0$. If we consider sequences of (proper) priors $\Pi_{n,k}$ for $n$ that
can change with the sample size $k$, it can readily be seen from
bound~\eqref{bounding_prior} in the proof that the assertion of the theorem
also holds if $\Pi_{n,k}(n) \geq \beta/(nk)^{\alpha }$ for
all positive integers $n\leq \lambda k^{1/2}$ and some $\alpha, \beta > 0$.
In particular, it holds for priors with restricted support of the form 
\begin{equation}
    \Pi_{n,k}(n) \propto f(n)\,\mathbf{1}(n \leq \lambda k^\alpha),
    \label{eq.seqs_of_priors}
  \end{equation}
where $f$ satisfies $n^{-\alpha/2} \lesssim f(n) \lesssim n^{\alpha/2}$
for some $\alpha\geq 1/2$.

The techniques used to prove Theorem~\ref{result} can also be adapted to
asymptotic regimes where $n_0$ is bounded and $p_0$ converges to $0$ as $k$
tends to infinity. In this case, we depart from the Poisson limit and it should
thus become easier to discern the parameters $n_0$ and $p_0$. Still, if $p_0$
approaches zero quickly with increasing $k$, only a few observations with
positive counts will remain, such that the problem becomes difficult again. The
next result states that posterior consistency holds in this setting as long as
$p_0 \gtrsim \log\,k/\sqrt{k}$.

\begin{thm}\label{thm:bounded_n}
Consider the binomial $(n,p)$ model. For any $B\geq 2,$ define the parameter regime
  \begin{equation*}
    \mathcal{M}_k^\mathrm{b}(B)
    :=
    \Big\{ (n,p) \,\colon\,2\le n \le B, \frac{\log k}{B \sqrt{k}} \le p \Big\}.
  \end{equation*}
  If $\Pi_n(n) > 0$ for all $n\in\mathbb{N}$ with $2\le n \le B$, the posterior
  asymptotically concentrates all mass on the true population size as
  $k\to\infty$, meaning
  \begin{equation*}
  \sup_{(n_0,p_0)\in \mathcal{M}_k^\mathrm{b}(B)} \E_{n_0,p_0}\big[\Pi( n \neq
    n_0 \ |\, \data) \big] \rightarrow 0.
\end{equation*}
\end{thm}

The uniform posterior concentration on the true value $n_0$ that follows for
parameters in the domain $\mathcal{M}_k^\mathrm{b}(B)$ (by
Theorem~\ref{thm:bounded_n}) and for parameters in
$\smash{\mathcal{M}_k(\lambda)}$ that additionally satisfy
\eqref{eq:posterior-consistency-condition} (by Theorem~\ref{result}) also
implies uniform consistency of the respective posterior mode estimators
$\hat{n}_k\in\argmax_{n} \Pi(n\,|\,\data)$.  Indeed, for any subset
$\mathcal{M}_k$ of the mentioned domains,
\begin{equation}\label{eq:pm_consistency}
  \sup_{(n_0, p_0)\in\mathcal{M}_k}\,\mathbb{P}_{n_0,p_0}(\hat{n}_k \neq n_0) \to 0
\end{equation}
as $k\to\infty$. As a special case, this includes the estimator
$\hat{n}_\mathrm{pm}$ introduced in equation~\eqref{eq:scale_pm}.  Furthermore,
if $\mathcal{M}_k$ is such that $n_0$ stays bounded, consistency of the Bayes
estimator $\hat{n}_\mathrm{rql}$ with respect to the relative quadratic loss
given in \eqref{eq:scale_rql} also follows. The same holds for the Bayesian
estimators introduced in \cite{Hamedani88} and \cite{Chilko}.
Since the estimators in \cite{Raftery}, \cite{Berger}, and \cite{Link} are based
on improper priors for $n$, our results can be applied to modifications of these
estimators where $\Pi_n$ is restricted to a bounded support.

We now state a lower bound proving that no uniformly consistent estimator for
$n_0$ exists if $n_0/p_0 \gtrsim \sqrt{k}$.  Combined with
statement~\eqref{eq:pm_consistency}, this implies that posterior contraction on
the true value $n_0$ is impossible in this regime.
\begin{thm}[lower bound]\label{thm:lower_bound}
Let $\eta, \delta>0$ and fix sequences $(n_k)_k \subset \mathbb{N}$ and $(p_k)_k
\subset (0, 1-\delta)$ such that $n_k/p_k \geq \eta\sqrt{k}$ for all $k$. Define
the set $\mathcal{M}^*_k := \{(n_k, p_k), (n_k+1, p'_k)\}$ where $\smash{p'_k
= \frac{n_k}{n_k+1}\,p_k}$. Then there exists a positive constant $c = c(\eta,
\delta)$ such that for any estimator $\hat{n} = \hat{n}(\data)$ and all $k$
  \begin{equation*}
    \max_{(n_0,p_0)\in\mathcal{M}^*_k} \mathbb{P}_{n_0,p_0}(\hat{n}\neq n_0) \ge c.
  \end{equation*}
\end{thm}
If the expectation value $n_0p_0$ is constant or stays bounded away from zero
and infinity, Theorem~\ref{thm:lower_bound} implies that it is impossible to
recover $n_0$ asymptotically when $n_0 \gtrsim k^{1/4}$.  Therefore, the
sufficient condition~\eqref{eq:posterior-consistency-condition} for posterior
consistency in Theorem~\ref{result} is sharp up to logarithmic factors.
Theorem~\ref{thm:lower_bound} also implies that the asymptotic recovery of
a bounded $n_0 \le B$ is only possible if $p_0 \gtrsim 1/\sqrt{k}$, which proves
that the lower bound on $p$ in Theorem~\ref{thm:bounded_n} can at most be
relaxed by a factor of $\log(k)$.  In particular, this implies that product
priors $\Pi = \Pi_n \otimes \Pi_p$ are already asymptotically optimal in the
settings of Theorem 1 and 2 (at least up to log-factors). Modeling dependencies
between $n$ and $p$ via $\Pi$ may hence affect the finite sample performance,
but it will not improve the asymptotic behavior substantially.

To complete the discussion on posterior concentration, it should be mentioned
that another interesting regime occurs if $p_0$ is bounded away from zero and
$n_0 \to \infty$ as $k\to\infty$.  Since the sample maximum grows quickly in
this case, controlling the posterior requires completely different bounds than
before. This regime is of little relevance for our application and we omitted
the mathematical analysis in this work. Note that a numerical study in
\cite{SchneiderStaudt} indicates that posterior consistency holds in this
setting as long as $n_0$ grows slower than $\sqrt{k}$, which coincides with the
lower bound in Theorem~\ref{thm:lower_bound}.

\subsection*{Limiting shape of the posterior}
In the regime where the binomial expectation $n_0p_0$ is bounded away from zero
and infinity, we can characterize the limiting distribution of the posterior in
the Bernstein-von Mises (BvM) sense. For parametric problems, the standard BvM
theorem states, under weak conditions on the prior and the model, that the
posterior converges in total variation distance to a normal distribution
centered at the MLE (see \cite{Doob} for a precise statement). The BvM
phenomenon has been studied in a variety of non-standard settings as well,
including estimation of the probability mass function \cite{MR2471588},
non-regular models \cite{bochkina2014}, and model selection \cite{Castillo2015}.
To the best of our knowledge, BvM theorems for discrete parameters have not been
considered yet. One might wonder in which sense such a limiting shape theorem
can hold, since a discrete distribution can not converge to a continuous
distribution with respect to the total variation distance. 

For the binomial $(n,p)$ problem, we show below that the posterior on $n$
converges in total variation to a discretized version of the normal
distribution. The total variation distance between two discrete distributions
$P$ and $Q$ defined on the integers is $\TV(P,Q)=\tfrac 12 \sum_{i\in
\mathbb{Z}} |P(i)-Q(i)|$, and we say that an integer-valued random variable $X$
has the discrete normal $\mathcal{N}_\mathrm{d}(\mu, \sigma^2)$ distribution if
it satisfies $\P(X=j)\propto \exp\!\big(-\tfrac{1}{2\sigma^2}(j-\mu)^2\big)$ for
all $j\in\mathbb{Z}$.
This distribution is characterized in \cite{kemp1997} as the probability
distribution on the integers with maximal entropy for given expectation and
variance.  Its connection to the Jacobi theta functions and other properties are
analyzed in \cite{szablowski2001}.

Asymptotically, the posterior of $n$ will be centered at the estimator 
\begin{align}
    \hat n 
    := \frac{S_k^2}{S_k^2-k \sum_{i=1}^k X_i(X_i-1)}
    \qquad\text{with} \ S_k=\sum_{i=1}^k X_i.
    \label{eq.whn_def}
\end{align}
In \cite{MR23035}, this estimator is attributed to \cite{Student}, who derived
it by matching the first two moments of the binomial distribution.

\begin{thm}[discrete Bernstein-von Mises]\label{thm.BvM}
Suppose that the parameter $a$ in the $\Beta(a,b)$ prior on $p$ is
a non-negative integer and $\Pi(n)\propto n^{-\alpha}$ for some $\alpha>1$ and
all $n\in\mathbb{N}$. Then, as $k\to\infty$,
\begin{align*}
    \sup_{(n_0,p_0)\in \mathcal{M}_k(\lambda)}
    \E_{n_0,p_0}\bigg[\TV\!\bigg(\Pi\big(n =\cdot\,|\,\data\big),
      \mathcal{N}_\mathrm{d}\Big(\hat n, \frac{2n_0^2}{kp_0^2}\Big)\bigg)\bigg]
    \to 0.
\end{align*}
\end{thm}
 
The proof is rather involved and precise bounds for the likelihood ratio
in a neighborhood of the true $n_0$ are required. The main step is to establish
that the log-likelihood can locally around $n_0$ be written as 
\begin{align}
    \frac 12 \sum_{i=1}^k (X_i)_2
    \log\Big(1-\frac 1n\Big)
    +\frac{S_k^2}{2kn}
    \label{eq.BvM_LR_first_order}
\end{align}
up to terms of negligible order. It can be checked that $n = \hat n$ is
a maximizer of this expression. A second order Taylor expansion of
\eqref{eq.BvM_LR_first_order} around $\hat n$ then shows that the posterior is
close to the limit on a localized set. The full proof is deferred to
Section~\ref{app:BvM} in the supplement. 

Since $p_0$ is of order $1/n_0$ for parameters $(n_0, p_0)$ in the class
$\mathcal{M}_k(\lambda)$, the limit distribution in Theorem~\ref{thm.BvM}
converges to the point mass on $\hat n$ if $n_0 \ll k^{1/4}$. For $n_0 \gg
k^{1/4}$, on the other hand, the limiting variance diverges with $k$. In this
context, we also mention another possibility to define a discretized normal
distribution $Z\sim \mathcal{N}_\mathrm{D}(\mu, \sigma^2)$ on the integers via
$Z:=\argmin_{j \in \mathbb{Z}}|j-X|$ for $X\sim\mathcal{N}(\mu, \sigma^2)$.
The distributions $\mathcal{N}_\mathrm{D}(\mu, \sigma^2)$ and
$\mathcal{N}_\mathrm{d}(\mu, \sigma^2)$ are not the same, but they are close in
total variation distance for large $\sigma$, see Lemma
\ref{lem.TV_two_discretization_bd} in the supplement. If $n_0 \gg k^{1/4}$, this
implies that we can replace the limit distribution $\mathcal{N}_\mathrm{d}$ in
the BvM type result by $\mathcal{N}_\mathrm{D}$.

We conjecture that discretized normal distributions like the ones above will
occur as generic posterior limit distributions for a wide range of discrete
parameter models, such as the ones considered in \cite{MR2963996}.

\subsection*{Asymptotic results for frequentist methods}
\label{S:Frequentist}

For comparison, we briefly summarize existing asymptotic results for frequentist
estimators. Early estimators for $n$ based on the method of moments and the
maximum likelihood approach can be found in \cite{Haldane} and \cite{Blumen}. In
\cite{OPZ}, it is shown that these estimators are highly irregular if $p$ is
small and methods to stabilize them are proposed.  More recently, two further
estimators were introduced by \cite{Gupta}: another modification of the method
of moments estimator, and a bias correction of the sample maximum.
For the new moments estimator,
$\hat{n}_{\mathrm{NME}}$, which depends on the choice of a tuning parameter
$\alpha > 0$, it holds that
\begin{equation*}
  \sqrt{k}(\hat{n}_{\mathrm{NME}}-n)~
    \overset{\mathcal{D}}{\longrightarrow}~\mathcal{N}(0,2\alpha^2n(n-1))
\end{equation*}
as $k\to\infty$, where $n$ and $p$ are both held fixed. To derive this result,
the authors exploit the exponential convergence of the sample maximum to $n$,
which suggests that the limit distribution is only an accurate approximation for
very large values of $k$, especially if $p$ is small. For the bias corrected
sample maximum $\hat{n}_{\mathrm{bias}}$,
\cite{Gupta} derive
\begin{equation*}
  (nk)^{1/(n-1)}(\hat{n}_{\mathrm{bias}}-n)~
    \overset{\mathcal{D}}{\longrightarrow}~\delta_1
\end{equation*}
as $k\to\infty$, where $\delta_1$ denotes the Dirac measure at 1.

The Carroll-Lombard estimator $\hat{n}_{\mathrm{CL}}$ in \cite{Carroll} is the
maximizer of the beta-binomial likelihood in \eqref{E:posteriorN}. It is
therefore the posterior mode estimator under a beta prior on $p$ and an improper
uniform prior on $n$. For $p$ constant, $n\rightarrow\infty$ and
$\sqrt{k}/n\rightarrow0$ as $k\rightarrow\infty$, it is known that
\begin{align*}
  \sqrt{k} \left( \frac{\hat{n}_{\mathrm{CL}}- n}{n} \right)~
    \overset{\mathcal{D}}{\longrightarrow}~
    \mathcal{N}\left(0,\frac{2(1-p)^2}{p^2}\right).
\end{align*} 

All of the results above hold for $p$ fixed and hence provide only limited
insight into the situation when $p$ is small. A notable extension is discussed
in \cite{Hall}. This article studies a variation $\tilde{n}_\mathrm{CL}$ of the
Carroll-Lombard estimator by restricting the search for the maximum of the
beta-binomial likelihood to a suitable neighborhood around the true $n$. Since
this construction depends on the truth, the maximizer $\tilde{n}_{\mathrm{CL}}$
is in a strict sense not an estimator. It is shown that for $n=n_k\to\infty$ and
$p=p_k\to0,$ $np\to\mu\in(0,\infty],$ and $kp^2\to\infty,$ 
\begin{equation}\label{hall-result}
  \frac{p \sqrt{k}}{\sqrt{2}}\left( \frac{\tilde{n}_{\mathrm{CL}}- n}{n} \right)~
  \overset{\mathcal{D}}{\longrightarrow}~\mathcal{N}(0,1)
\end{equation}
as $k\to\infty$. This setup is similar to the one in Theorem~\ref{result}
and~\ref{thm.BvM}, but it does not cover the asymptotic regime considered in
Theorem~\ref{thm:bounded_n}. For the asymptotic normality in
\eqref{hall-result}, it matters that $\tilde{n}_{\mathrm{CL}}$ is regarded as
maximizer over the real numbers and not the integers. To see this, consider
a sequence such that $p\sqrt{k}/n \to \infty.$ As the rate in
\eqref{hall-result} blows up, we must have that $\tilde{n}_{\mathrm{CL}}$
converges to $n$ in probability, which means that if one replaces
$\tilde{n}_{\mathrm{CL}}$ by the closest integer, one recovers the exact value
of $n$ with probability increasing to one as $k \to \infty.$
Also note that result~\eqref{hall-result} is a specific scenario in a broader
context and relies on further technical conditions, like $n$ to be lower bounded
by some positive power of $k$.

\section{Numerical results}\label{S:SimulationStudy}

In this section, we numerically investigate the posterior distribution and the
finite sample performance of Bayesian estimators for different choices of priors
$\Pi_p$ and $\Pi_n$. We consider beta priors with parameters $a, b > 0$ for $p$,
as well as proper and improper scale priors $\Pi_n(n) \sim n^{-\gamma}$ with
$\gamma \geq 0$.
In situations where we assume a prior guess $\tilde p$ for the value of $p$, the
parameters $a$ and $b$ are chosen such that $a\in\{1, 2\}$ and
$b=a/\tilde{p}-a$.  Then, the $\Beta(a,b)$ distribution has expectation $\tilde
p$ and its probability density function is monotone if $a = 1$, while it is
unimodal if $a = 2$. For comparison, we also study the objective prior with $a
= b = 1$, corresponding to a uniform distribution on the probability of success
$p$.

\begin{figure}[tb!]
  {\footnotesize
    \hspace{2.72cm} $k = 10^2$
    \hspace{1.22cm} $k = 10^3$
    \hspace{1.22cm} $k = 10^4$
    \hspace{1.22cm} $k = 10^5$
  }
  \includegraphics[width=\linewidth]{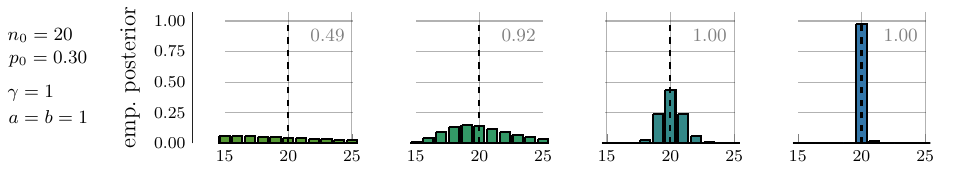}
  \includegraphics[width=\linewidth]{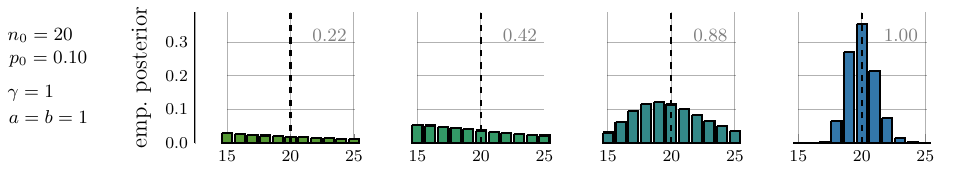}
  \includegraphics[width=\linewidth]{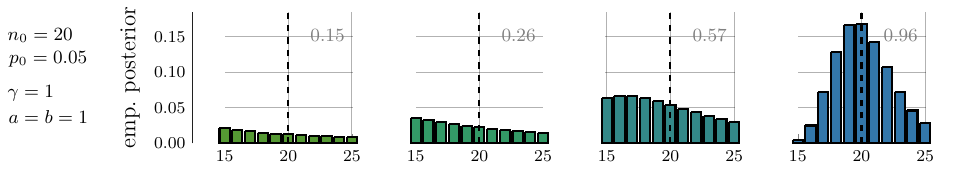}
  \caption{Averaged posterior distributions for true parameters $n_0 = 20,$
    $p_0\in\{0.05, 0.1, 0.3\},$ sample sizes $100 \leq k \leq 10^5$, and
    $\gamma=a=b=1.$ The bar plots display $\E_{n_0, p_0}[\Pi(n\,|\,\data)]$ for
    different values of $n.$ The number in the upper right corner of each graph
    is the expected posterior mass in the interval $[15, 25]$.
  }
  \label{fig:posterior-hist}
\end{figure}

\paragraph{Posterior contraction}
For $\gamma = a = b = 1,$ Figure~\ref{fig:posterior-hist} displays the expected
posterior $n \mapsto \E_{n_0, p_0}\!\big[\Pi(n\,|\,\data)\big]$ for $n_0 = 20$
and $p_0\in\{0.05, 0.1, 0.3\}$ based on $1000$ draws of the data.  Figures for
different parameters can be found in Section~\ref{app:simulations} of the
supplement.  For $n_0 = 20$ and $p_0 = 0.3$, Figure~\ref{fig:posterior-hist}
demonstrates that the posterior distribution visibly contracts to the true value
$n_0 = 20$ for sample sizes $k \ge 10^4$. If $p_0 \le 0.1$, $k = 10^5$ or more
observations become necessary for a comparable effect.
Figure~\ref{fig:app:posterior_50} in the supplement shows that increasing $n_0$
likewise results in broader and less concentrated distributions for given sample
sizes $k$.
Changing $\gamma$, $a$, or $b$ has little effect on the shape of the posterior for
large values of $k$, which is in accordance with the Bernstein-von Mises type result
in Theorem~\ref{thm.BvM}. Still, setting $a = 2$ and $b = 2/p_0-2$ notably
affects the distributions for $k = 100$ and $k = 1000$ by reducing the bias of
the mode, especially when $p_0$ is small (see Figures~\ref{fig:app:posterior_10}
and \ref{fig:app:posterior_20} in the supplement).

It is worth pointing out that the posterior of $n$ behaves considerably better
than the sample maximum $M_k$. For example, if $n_0 = 20$ and $p_0 = 0.3$,
a sample size of at least $k = 10^{10}$ is needed for $\mathbb{P}_{n_0, p_0}(M_k
= 20) \ge 0.35$, while about $10^4$ samples are sufficient for $\E_{n_0,
p_0}\big[\Pi(n = 20\,|\,\data)\big] \ge 0.35$. If $p_0$ is set to $0.1$ in this
comparison, the respective sample sizes are of the dimensions $10^{19}$ versus
$10^5$.

\paragraph{Posterior shape}
In order to examine the validity of the Bernstein-von Mises type result for
finite samples, we compare the posterior of $n$ to the discrete normal
$\mathcal{N}_\mathrm{d}$ distribution predicted as limit in
Theorem~\ref{thm.BvM}. Figure~\ref{fig:bvm-examples} depicts several examples of
the posterior distribution in a setting with $n_0 \sim k^{1/4}$ and $p \sim
1/n_0$, such that the variance parameter $\sigma^2 = 2n_0^2/kp_0^2$ of the
limiting distribution stays (roughly) constant. While the posterior shape
deviates (in part strongly) from the BvM limit for sample sizes $k \le 10^3$, it
clearly approaches the $\mathcal{N}_\mathrm{d}$ distribution as $k$ becomes
larger.  At the same time, the center of the posterior does not seem to
concentrate on the true value $n_0$ as $k$ increases. The posterior often
exhibits a less broad distribution than suggested by Theorem~\ref{thm.BvM}
, especially when the sample maximum $M_k$ reaches into the bulk of the BvM
limit for moderate values of $k$.

Figure~\ref{fig:bvm-convergence} shows the total variation distance between the
posterior and the BvM limit. This time, we consider settings with $n_0\sim
k^{1/4}$ and $n_0\sim k^{1/3}$, which are covered by Theorem~\ref{thm.BvM}, but
also the case $n_0\sim k^{1/2}$, which falls outside of its scope.  One can
clearly see the TV distance decreasing in the former two cases, while it does
not decay if $n_0\sim k^{1/2}$. This indicates that the restriction of $(n_0,
p_0)$ to $\mathcal{M}_k(\lambda)$ in Theorem 4 cannot be relaxed.

\begin{figure}
    \centering
    {\footnotesize\hspace{0.38cm}
    \begin{minipage}{0.22\textwidth}\centering
    $k = 10^2$,\\$n_0 = 9$,~~$p_0 \approx 0.44$
    \end{minipage}\hspace{0.21cm}
    \begin{minipage}{0.22\textwidth}\centering
    $k = 10^3$,\\$n_0 = 16$,~~$p_0 \approx 0.25$
    \end{minipage}\hspace{0.21cm}
    \begin{minipage}{0.22\textwidth}\centering
    $k = 10^4$,\\$n_0 = 30$,~~$p_0 \approx 0.13$
    \end{minipage}\hspace{0.21cm}
    \begin{minipage}{0.22\textwidth}\centering
    $k = 10^5$,\\$n_0 = 53$,~~$p_0 \approx 0.08$
    \end{minipage}
    }\\[1ex]
    \includegraphics[width=\textwidth]{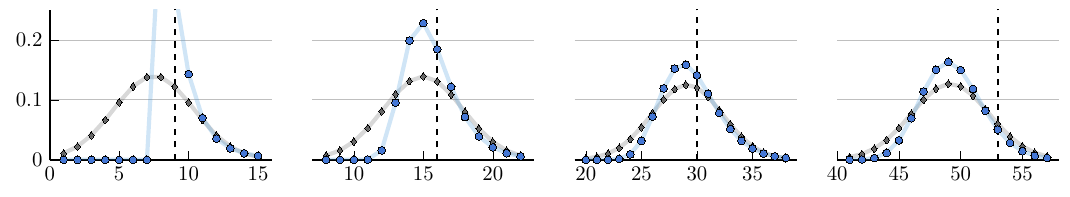}
    \includegraphics[width=\textwidth]{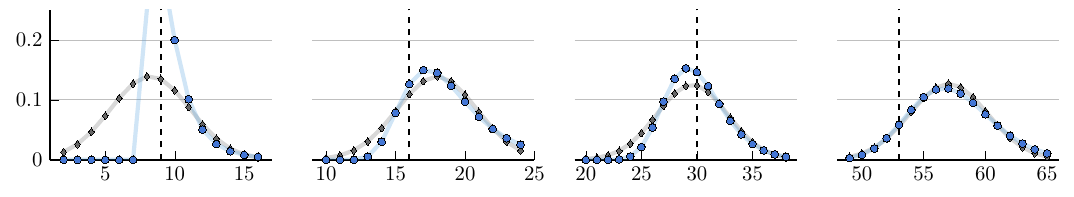}
    \includegraphics[width=\textwidth]{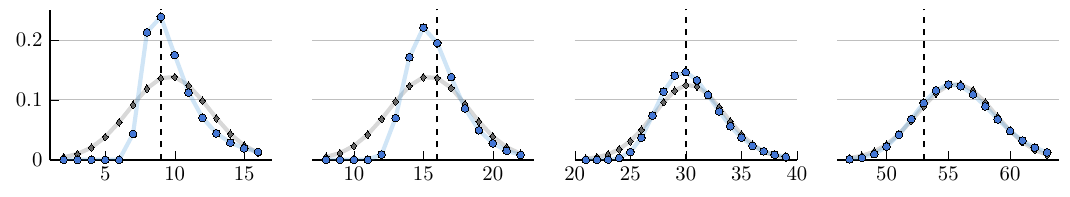}
    \caption{Posterior distribution $\Pi\big(n = \cdot\,|\,\data\big)$ in blue
      and discrete normal distribution $\mathcal{N}_\mathrm{d}\big(\hat{n},
      2n_0^2/kp_0^2\big)$ of Theorem~\ref{thm.BvM} in grey.  The binomial
      parameters are chosen according to $n_0 = \smash{\lfloor 3k^{1/4}\rfloor}$
      and $p_0 = 4/n_0$, where $\lfloor\,\cdot\,\rfloor$ denotes the floor
      function. This results in an (asymptotically) constant value $2n_0^2
      / kp_0^2 \approx 10$. The prior parameters are $\gamma = a = b = 1$.  In
      each graph, independent realizations of $\data$ are used. The dashed lines
      mark the true value $n_0$.
    }
    \label{fig:bvm-examples}
\end{figure}

\begin{figure}
    \centering
    \rotatebox[origin=r]{90}{TV distance\hspace{0.8cm}}
    \hspace{0.1em}
    \begin{minipage}[t]{0.31\textwidth}
      \centering
      \qquad $n_0 \sim k^{1/4}$\\[1ex]
      \includegraphics[width=0.98\textwidth]{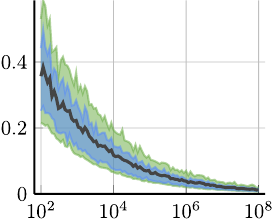}\\
    \end{minipage}
    \begin{minipage}[t]{0.31\textwidth}
      \centering
      \qquad$n_0 \sim k^{1/3}$\\[1ex]
      \includegraphics[width=0.98\textwidth]{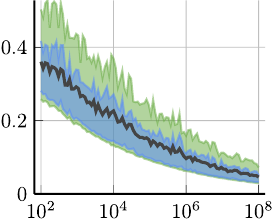}\\
      \qquad sample size $k$
    \end{minipage}
    \begin{minipage}[t]{0.31\textwidth}
      \centering
      \qquad$n_0 \sim k^{1/2}$\\[0ex]
      \includegraphics[width=0.98\textwidth]{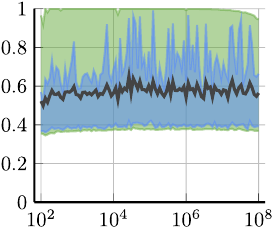}\\
    \end{minipage}
    \caption{Total variation distance between $\Pi\big(n=\cdot\,|\,\data\big)$
      and $\mathcal{N}_\mathrm{d}\big(\hat{n}, 2n_0^2/kp_0^2\big)$ in dependence
      of the sample size $k$ for prior parameters $\gamma = a = b = 1$. The black
      line shows the empirical mean over 100 independent realizations of $\data$,
      while the blue and green areas correspond to the respective 25--75 and 10--90
      percentile ranges. The binomial parameters are chosen as $n_0 = \lfloor
      3 k^{1/\delta} \rfloor$ and $p_0 = 4/n_0$ for $\delta = 4, 3, 2$ (from left to
      right).
    }
    \label{fig:bvm-convergence}
\end{figure}

\paragraph{Estimator performance}
We next study the finite sample performance of a number of Bayesian and
frequentist estimators. In total, the following estimators are considered.

\begin{itemize}
  \item The scale estimator $\hat{n}_\mathrm{rql}$ with respect to the relative
    quadratic loss defined in \eqref{eq:scale_rql}. It depends on the scale
    parameter $\gamma$ and the beta parameters $a$ and $b$, and we refer to it
    by $\mathrm{SE}(\gamma)$.  Note that the posterior distribution for the
    scale prior is well defined as long as $a+\gamma>1$ (see \citet{Kahn} for
    a cautionary note in this context). However, we also report results for
    $\mathrm{SE}(0)$ with $a = 1$, in which case the posterior is no probability
    distribution, but we still obtain finite estimates when evaluating
    \eqref{eq:scale_rql} numerically.  The estimator proposed by \cite{Raftery}
    is equivalent to the scale estimator with the choices $\gamma=1$ and
    $a=b=1$, and is denoted by $\mathrm{RE}$ in the following.
  \item The posterior mode estimator $\hat{n}_\mathrm{pm}$ defined in
    \eqref{eq:scale_pm}.  We refer to it by $\mathrm{PME}(\gamma)$ and assume
    the same prior choices as for $\mathrm{SE}(\gamma)$.  If $\gamma = 0$, it
    coincides with the Carroll-Lombard estimator.  Furthermore, if
    $N_0\in\mathbb{N}$ is chosen sufficiently large, $\mathrm{PME}(0)$ in
    practice also coincides with the estimator proposed by \cite{DG}, which is
    the posterior mode estimator under a beta prior on $p$ and $\Pi_n
    = \mathbf{1}_{\{1,\ldots,N_0\}}$.
  \item The (frequentist) new moment estimator $\mathrm{NME}(\alpha)$ with
    parameter $\alpha$, proposed in \cite{Gupta}. The authors use $\alpha = 1$
    in their numerical work.
  \item The (frequentist) sample maximum $\mathrm{MAX}$.
\end{itemize}

Note that we do not include the maximum likelihood estimator and the moment
estimator \eqref{eq.whn_def} in our comparison, since their finite sample behavior
proved to be very unstable in the range of parameters we consider.

\begin{figure}[tbh!]
  {\scriptsize $a = b = 1$}
  \includegraphics[width=0.97\linewidth]{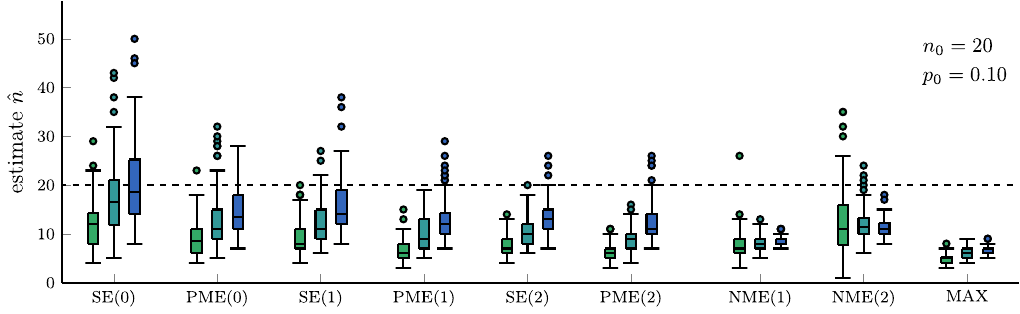}
  \vspace{0.1cm}

  {\scriptsize $a = 2$, $b = a/p_0 - a$}
  \includegraphics[width=0.97\linewidth]{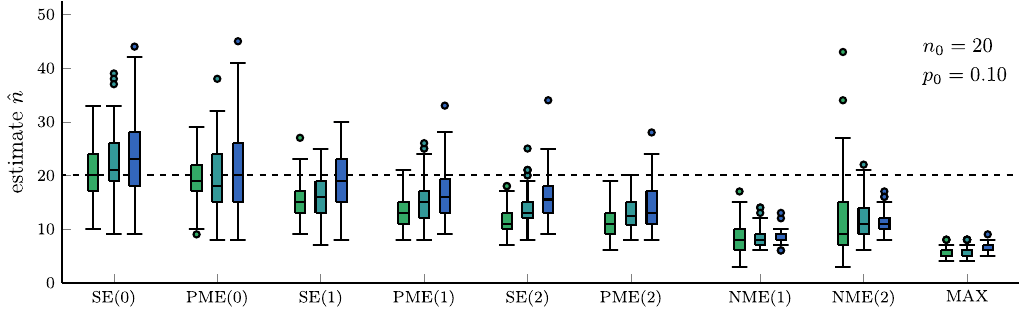}
  \caption{Comparison of estimators for $n$ with underlying parameters
    $(n_0,p_0) = (20,0.1)$ and for three sample sizes $k=30$ (left/green),
    $k=100$ (middle/turquoise), $k=300$ (right/blue). All box plots are based on
    $100$ independent repetitions. Outliers are plotted if they deviate from the
    median by more than 1.5 times the interquartile range.
  }
  \label{fig:comparison}
\end{figure}

Figure~\ref{fig:comparison} summarizes the performance of the proposed
estimators for $n_0 = 20$ and $p_0 = 0.1$ when $k\in\{30, 100, 300\}$.
Further simulation results that cover settings with $n_0\in\{10, 20, 50, 100,
200\}$ and $p_0\in\{0.05, 0.1, 0.3\}$ can be found in
Section~\ref{app:simulations} of the supplement.
We observe several salient tendencies among the Bayesian estimators.
First, the smaller $\gamma$ is, the smaller the bias but the larger the variance
of the estimates becomes. Estimators with $\gamma = 1$ or $2$ typically
underestimate $n_0$, while estimators with $\gamma = 0$ have a larger variability.
Secondly, the bias typically reduces as $k$ is increased from $30$ to $300$. The
variance, on the other hand, only slightly decreases or even increases in some
instances.
Thirdly, the $\mathrm{SE}$ and the $\mathrm{PME}$ perform
similarly for the same $\gamma$, with the former having slightly larger
estimates on average. In particular, we can conclude that the posterior mode does
not suffer from any peculiar instabilities or other drawbacks.
Finally, taking knowledge of $p_0$ into account (by choosing $a = 2$ and $b
= 2/p_0-2$) notably reduces the bias of all Bayesian estimators. As expected, this
effect is most pronounced for small values of $k$.

In comparison, the frequentist estimators typically underestimate $n_0$ more
severely than the Bayesian ones. While the $\mathrm{NME}$ clearly improves
over the sample maximum, it still produces values centered about $n \approx 10$ for
both $\alpha = 1$ and $\alpha = 2$ when $n_0 = 20$. We consistently observed that the variance of the
$\mathrm{NME}$ quickly decreases with increasing $k$ (usually faster than for
the Bayesian estimators), but that its bias barely reduces at the same time.
Indeed, values estimated by the $\mathrm{NME}$ seem to be strongly influenced by
the sample maximum, and it seems to inherit the extremely slow convergence
toward the real value in the setting of moderate to large $k$. Similar issues
were also observed for the bias reduction estimator proposed by \cite{Gupta},
which we did not include in our figures.
We still stress that the $\mathrm{NME}(\alpha)$ with a suitable choice of $\alpha$
is competitive with the Bayesian procedures in some regimes, especially if $k$ is
small and $p$ is moderate (see, e.g., Figure~\ref{fig:app:comparison_noinfo_0.3}
in the supplement).

\paragraph{Prior choices}
In the following, we take a systematic look at the influence of the prior choice
on the performance of the Bayesian estimators in case of small to moderate
sample sizes. Our goal is to establish some practical guidance regarding how to
choose $\gamma$, $a$, and $b$ in different scenarios. 
To this end, we compare the scale estimators $\mathrm{SE}(\gamma)$ with
$\gamma\in\{0, 0.5, 1, 2, 3\}$ and the posterior mode estimator
$\mathrm{PME}(0)$, which corresponds to the Carroll-Lombard estimator,
in several simulations, documenting the parameter constellations that perform best.

In a first study, we consider the settings $k\in\{30,100,300\}$,
$n_0\in\{20,50\}$, and $p_0\in\{0.05, 0.1, 0.3\}$, while assuming a good guess
$\tilde p = p_0$ that correctly informs the $\Beta(a,b)$ prior on $p$
via $a = 2$ and $b = a/\tilde{p} - a$, such that its expectation is $\tilde{p}$.
For all pairs $(n_0,p_0)$ and each estimator $\hat n$, we empirically approximate
\begin{itemize}
  \item the relative mean squared error (RMSE) given by
    $\E_{n_0, p_0}\big[(\hat n/n_0-1)^2\big]$,
  \item the bias $\E_{n_0, p_0}[\hat n] -n_0$ of the estimator,
\end{itemize}
by averaging over $1000$ realizations of $\data$.
In Table~\ref{Table:SummaryAlpha}, we present the estimators that have the
lowest RMSE and the lowest bias for the different choices of $k$. The outcome
generally advises to select smaller values of $\gamma$ the smaller $p_0$ is
expected to be. 
We only found minor differences between the $\mathrm{PME}(0)$ and the $\mathrm{SE}(0)$.
Both of them outperform the other estimators in the regime of very small $p_0$.
The drawback of these estimators is their high variance, which is why larger choices
of $\gamma$ become preferable for low RMSEs as $k$ increases.  The similarity of
Table~\ref{Table:SummaryAlpha20} and \ref{Table:SummaryAlpha50} for $n_0 = 20$ and $n_0 = 50$ suggests
that the influence of $n_0$ is weaker than the one of $p_0$ for the optimal estimator choice.

\begin{table}[!htb]
  \begin{subtable}{.48\linewidth}
    \centering
    \caption{$n_0=20$}\label{Table:SummaryAlpha20}
    \begin{tabular}{rrll}
    $p_0$ & $k$ & RMSE      & bias     \\ \toprule
    0.05  & 30  & PME(0)    & SE($0$)  \\
    0.05  & 100 & PME(0)    & SE($0$)  \\
    0.05  & 300 & SE($0.5$) & PME(0) \\ \midrule
    0.1   & 30  & PME(0)    & SE($0$)  \\
    0.1   & 100 & SE($0.5$) & PME(0) \\
    0.1   & 300 & SE($1$)   & PME(0) \\ \midrule
    0.3   & 30  & SE($2$)   & SE($1$)  \\
    0.3   & 100 & SE($3$)   & PME(0) \\
    0.3   & 300 & SE($3$)   & SE($2$)  \\
    \bottomrule
    \end{tabular}
  \end{subtable}%
  \quad
  \begin{subtable}{.48\linewidth}
    \centering
    \caption{$n_0=50$}\label{Table:SummaryAlpha50}
    \begin{tabular}{rrll}
    $p_0$ & $k$ & RMSE      & bias      \\ \toprule
    0.05  & 30  & PME(0)    & SE($0$)   \\
    0.05  & 100 & PME(0)    & SE($0$)   \\
    0.05  & 300 & SE($0.5$) & PME(0)  \\ \midrule
    0.1   & 30  & PME(0)    & SE($0$)   \\
    0.1   & 100 & SE($0.5$) & PME(0)  \\
    0.1   & 300 & SE($1$)   & SE($0.5$) \\ \midrule
    0.3   & 30  & SE($1$)   & SE($0.5$) \\
    0.3   & 100 & SE($3$)   & PME(0)  \\
    0.3   & 300 & SE($3$)   & PME(0)  \\
    \bottomrule
    \end{tabular}
  \end{subtable}

  \vspace{0.2cm}

  \caption{Overview of the estimators with the smallest RMSE and the smallest
    absolute bias for $a=2$ and $b=2/p_0-2$.}
  \label{Table:SummaryAlpha}
\end{table}

Our next study covers a setting that is motivated by the data example
in Section~\ref{S:DataExample}, and we select $n_0 = 15$, $p_0=0.0339$, and $k=94$. This time, our focus lies
on the influence of the beta prior parameters $a\in\{1, 2\}$ and $b = a / \tilde{p} - a$.
We consider four different scenarios: no information about $p_0$
(setting $\tilde{p} = 0.5$), accurate information ($\tilde{p} = p_0$),
underestimation ($\tilde{p} = 0.5\,p_0$), and overestimation ($\tilde{p}
= 1.5\,p_0$).

The results in Table~\ref{Table:SimsForExample} show that it is advantageous to
choose a small $\gamma$ and a unimodal beta prior (i.e., $a=2$) if a good guess for $p_0$ is
available. If we have no information or are overestimating, it is again advisable to
select $\gamma=0$, while choosing a less confident prior for $p$ with $a = 1$.
In contrast, underestimation of $p_0$ leads to instabilities and
substantial overestimation of $n_0$ if $\gamma$ is small. Here, estimators with
(proper) prior choices $\gamma = 1$ and $2$ perform very well: the tendency of
overestimation caused by the choice $\tilde{p} = 0.5\,p_0$ is in part compensated by the
tendency of underestimation due to the higher value of $\gamma$.

\begin{table}[!htb]
  \begin{subtable}{.48\linewidth}
    \centering
    \begin{tabular}{cllll}
    $\tilde{p}$ & $a$ & est.  & RMSE  & bias    \\ \toprule
    $0.5$     & 1   & SE$(0.5)$  & 0.478 & -10.17  \\
              & 1   & SE$(0)$    & 0.395 & -9      \\ \midrule
    $p_0$     & 2   & PME(0) & 0.034 & -0.266  \\
              & 2   & SE$(0)$    & 0.036 & -0.043  \\
    \bottomrule
    \end{tabular}
  \end{subtable}%
  \quad
  \begin{subtable}{.48\linewidth}
    \centering
    \begin{tabular}{cllll}
    $\tilde{p}$  & $a$ & est. & RMSE  & bias   \\ \toprule
    $1.5\,p_0$ & $1$ & SE$(0)$   & 0.12  & -3.73  \\
               & $2$ & SE$(0)$   & 0.121 & -4.69  \\ \midrule
    $0.5\,p_0$ & $1$ & SE$(1)$   & 0.036 & -0.032 \\
               & $2$ & SE$(2)$   & 0.025 & -0.55  \\
    \bottomrule
    \end{tabular}
  \end{subtable}

  \vspace{0.2cm}

  \caption{The two estimators that perform best under different choices of
    $\tilde p$ for $n_0=15$, $p_0 = 0.0339$, and $k=94$. The respective values of
    $b$ are given by $b = a/\tilde{p} - a$.}
  \label{Table:SimsForExample}
\end{table}

Overall, our findings confirm that the smaller $p_0$, the more difficult it
becomes to estimate $n_0$ and the smaller $\gamma$ should be picked. A smaller
$\gamma$, however, increases the variance of the posterior distribution and
leads to estimators that are potentially more sensitive against
miss-specification in the beta prior. This is further investigated in
Table~\ref{Table:SimsSensitivity}, where we compare the sensitivity of
estimators corresponding to $\gamma=0$ and $\gamma=1$. Miss-specifying $\tilde
p = 0.5\,p_0$ leads to severe overestimates $\E_{n_0, p_0}[\hat{n}] \approx
2\,n_0$ for $\mathrm{PME}(0)$, while $\mathrm{SE}(1)$ is less sensitive in this
regard. Selecting $\gamma=0$ can therefore help to estimate $n_0$ in very
difficult scenarios, but it can also lead to heavily biased results if
$\tilde{p}$ is chosen too small. 

\begin{table}[!htb]
  \centering
  \begin{tabular}{l@{\qquad}cll}
    estimator  & $\wt p$  & RMSE & bias \\ \toprule
               & $p_0$      & 0.122 & -4.85 \\
    SE$(1)$    & $0.5\,p_0$ & 0.129 & 4.43 \\
               & $1.5\,p_0$ & 0.279 & -7.73 \\\midrule
               & $p_0$      & 0.034 & -0.27 \\
    PME$(0)$   & $0.5\,p_0$ & 1.002 & 14.32 \\
               & $1.5\,p_0$ & 0.139 & -5.09 \\
    \bottomrule
  \end{tabular}

  \vspace{0.2cm}
 
  \caption{Sensitivity of $\mathrm{SE}(1)$ and $\mathrm{PME}(0)$ against
    miss-specification of $\tilde p$. The value $a$ is set to 2, all other
    parameters are selected as in Table~\ref{Table:SimsForExample}. The
    behavior of $\mathrm{PME}(0)$ and $\mathrm{SE}(0)$ is comparable in this
    setting.}
  \label{Table:SimsSensitivity}
\end{table}

\paragraph*{Robustness} Motivated by our data example in
Section~\ref{S:DataExample}, we also investigate the situation where $n$
may vary within the sample. This appears to be relevant in many other situations
as well, e.g., in the capture-recapture method, where the (unknown) population
size of a species may change from experiment to experiment. While varying
probabilities $p$ have been investigated in \cite{Basu}, models with a varying
population size $n$ have not received attention in previous research, as far
as we are aware. 

To study this question numerically,
we generated $1000$ data sets $X_i$, $i=1,\dots,k$, with
sample size $k = 100$, where each observation $X_i$ is drawn independently from
a $\Bin(n_i,p_0)$ distribution and each $n_i$ is a realization of
a binomial random variable $N\sim\Bin(\tilde{n},\tilde{p})$. For each sample,
$p_0$ is drawn from a $\Beta(2,38)$ distribution with expectation $0.05$.
To test the influence of the varying parameter $n_i$, we compare the performance
of the estimators in the described scenario to their performance on binomial
samples with constant $n_0$ (chosen as the integer nearest to $\E[N]
= \tilde{n}\tilde{p}$) and the same realization of $p_0$. For both scenarios, we
simulated the RMSE with respect to $n_0$ and record their ratios in
Table~\ref{Table:SimsForRobustness} for parameters $\tilde n$ and $\tilde p$
resembling the data example in Section~\ref{S:DataExample}.
The resulting ratios are all close to one, which suggests a stable performance
of the estimators: estimating $n_0$ from a sample with heterogeneous $n_i$
(randomly drawn from $N$) instead of constant $n_0$ (close to
$\E[N]$) does not affect the RMSE much (on average).


\begin{table}[!htb]
  \centering
  \begin{tabular}{lll}
              & $\tilde{n}=8$ & $\tilde{n}=22 $  \\ \midrule
    estimator & RMSE-R & RMSE-R \\ \toprule
    SE(0.5)   & 1.022  & 1.130  \\ 
    SE(1)     & 1.011  & 1.067  \\ 
    SE(2)     & 1.020  & 1.010  \\ 
    PME(0)    & 1.032  & 1.073  \\ 
    RE        & 0.988  & 0.981  \\ 
   \bottomrule
  \end{tabular} 

  \vspace{0.2cm}
 
  \caption{Ratios of the RMSE for i.i.d.\ and non-i.i.d.\ samples (RMSE-R) for
    the estimators $\mathrm{SE}(\gamma)$, $\mathrm{PME}(0)$, and the Raftery
    estimator $\mathrm{RE}$. The beta prior for $\mathrm{SE}$ and $\mathrm{DGE}$
    uses $a=2$ and $b=38$.}
  \label{Table:SimsForRobustness}
\end{table}

\section{Data example}
\label{S:DataExample}

We now apply the previously described Bayesian estimators
to quantify the number of fluorescent molecules in super-resolution microscopy.
Reliable methods for this task are highly relevant in quantitative cell biology, which
aims to determine the concentration of specific biomolecules, like proteins, in the cell.
For general information, see \citet{Lee}, \cite{Rollins}, \citet{TaH}, \citet{AEM2015},
\citet{Karathanasis}, \citet{Staudt2020}, and references therein. 

\paragraph{Super-resolution microscopy}
The term super-resolution microscopy denotes a family of recently developed
techniques of fluorescence microscopy. It describes the ability to achieve
resolutions below the diffraction limit of visible light (about $250$ --
$500\,\mathrm{nm}$), which limits classical modes of optical microscopy
\citep{Hell}. The central idea is to separate photon emissions of spatially
close fluorescent markers (fluorophores) in time, e.g., by making them switch
between active and inactive states (until they bleach and become permanently
inactive).
In practice, the separation in time is realized by applying an excitation laser with low
intensity, such that only a small fraction of fluorophores in the sample are in the
active state during a given frame of observation.
By combining the resulting ``sparse'' information recorded over a series of
frames, an increased resolution of up to $20$ -- $30\,\mathrm{nm}$ can be
achieved.  See \citet{Betzig}, \citet{Rust}, \citet{Hess}, or \citet{Folling}
for different variants of this principle.


\paragraph*{Experimental setup}
Our data has been recorded at the Laser-Laboratorium G\"{o}ttingen e.V.
In a preparational step, DNA origami molecules \citep{Schmied} were dispersed on
a microscopic cover slip. DNA origami are nucleotide sequences engineered in
such a way that they fold into a desired shape and that fluorophores
can attach to them (see Figure~\ref{fig:origami-a}).
In the experiment, Alexa647 fluorophores with 22 different types of anchors were used,
each matching a different anchor spot on the origami. The attachment process
itself is random and is expected to occur with a probability between 0.6 and
0.75 according to the manufacturer. Hence, about 13 to
17 fluorophores should on average be attached to a single DNA origami.


The experiment was initialized in such a way that most fluorophores occupy
their active state in the first frame. All origami are therefore visible as bright
spots in Figure~\ref{fig:origami-b}. Note that individual fluorophores occupying
the same origami can not be discerned in this image; this becomes possible only
by analyzing later frames where most fluorophores are inactive and
markers show up individually (see the supplementary video). Each
frame had an exposure time of $15\,\mathrm{ms}$, and $14\,060$ consecutive frames were
recorded in total over a time span of about $3.5$ minutes.

\begin{figure}[tbh]
  \centering
  \begin{subfigure}[b]{0.49\linewidth}
    \centering\includegraphics[width=1.0\linewidth]{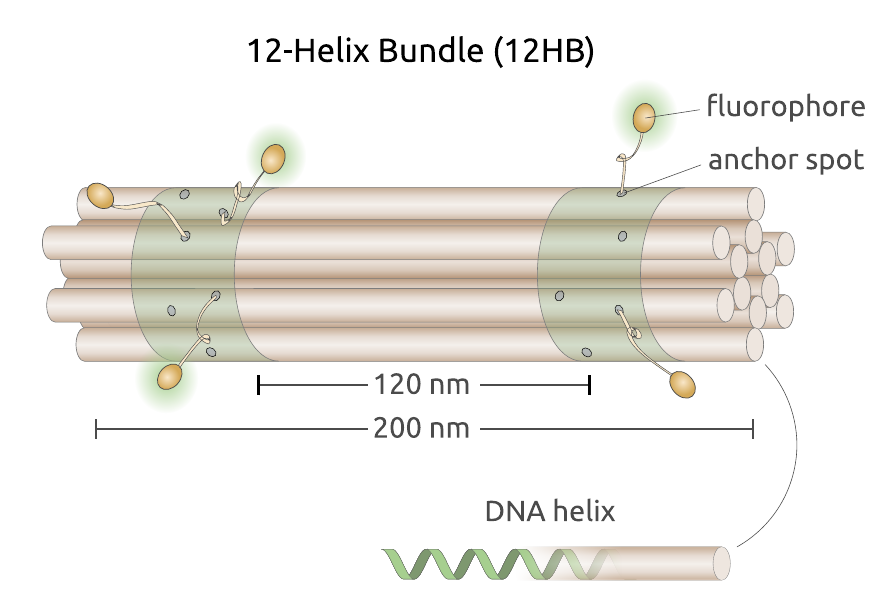}
    \caption{}\label{fig:origami-a}
  \end{subfigure}
  \begin{subfigure}[b]{0.49\linewidth}
    \centering\includegraphics[width=0.95\linewidth]{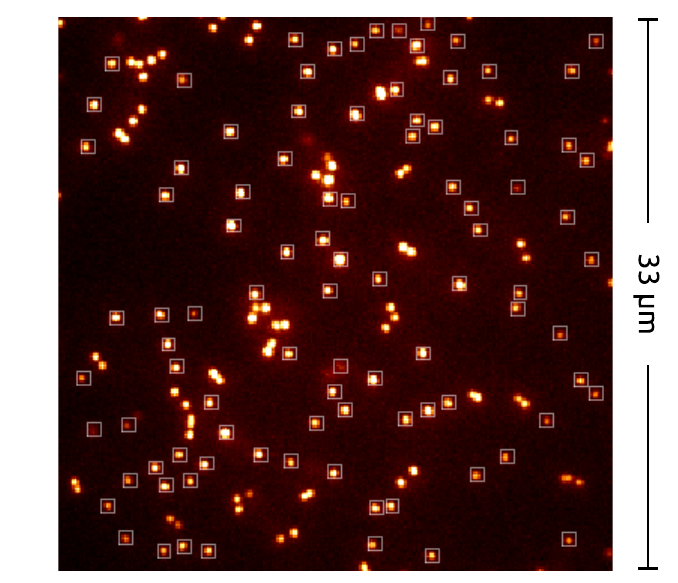}
    \caption{}\label{fig:origami-b}
  \end{subfigure}
  \caption{(a) Schematic drawing of the DNA origami used in the experiment. The
  origami is a tube-like structure that consists of 12 suitably folded DNA
  helices. In each of the two highlighted green regions up to 11 fluorescence
  markers can anchor. (b) First frame of the sequence of microscopic images. The
  94 regions of interest (ROIs) that were chosen for analysis are identified by
  white boxes. No overlap between ROIs
  was allowed, and it was made sure that no excessive background noise and
  disturbances affected the ROI during the course of the experiment.}
  \label{fig:origami}
\end{figure}

\paragraph*{Counting fluorophores} Quantitative biology addresses the issue of
counting the number of fluorophores from measurements like the one described
above. The brightness of each spot is proportional to the number of fluorophores
in the active state within the respective origami. Thus, an origami is invisible if
all of its fluorophores are inactive, but its location on the image is still known from the
first frame. This allows us to register 94 regions of interest (ROIs) marked in
Figure~\ref{fig:origami-b}. For illustration, six microscopic frames recorded at
the times $t\in\{1500, 3000, 4500, 6000, 7500, 9000\}$ are visualized in Figure~\ref{fig:frames}.
The influence of switching and bleaching on the observations is clearly visible.

We aim to estimate the number of fluorophores attached to each origami, which is
expected to be between $13$ and $17$. For simplicity, we assume that
each origami carries the same number $n_0$ of fluorophores and we only model
the mean number $n_t$ of unbleached fluorophores at time $t$. The physical
relation between $n_0$ and $n_t$ is given by
\begin{equation}\label{E:N0_Bleaching}
  n_t = n_0\,(1-\pbleach)^t,
\end{equation}
where $\pbleach$ denotes the bleaching probability.
The brightness observed for a spot in frame $t$ is proportional to the
(random) number $X_t$ of active fluorophores during the frame's exposure.
This number is binomially distributed, $X_t \sim \mathrm{Bin}(n_t, p)$, where $p$
denotes the (time-independent) probability that an unbleached fluorophore is in
its active state. We will estimate $n_0$ and $\pbleach$ by fitting a log-linear
model to equation \eqref{E:N0_Bleaching}, where the respective population sizes
$n_t$ are in turn estimated from the 94 realizations of $X_t$ observed
in frame $t$. 

\begin{figure}[tb!]
  \centering
  \includegraphics[width=0.95\linewidth]{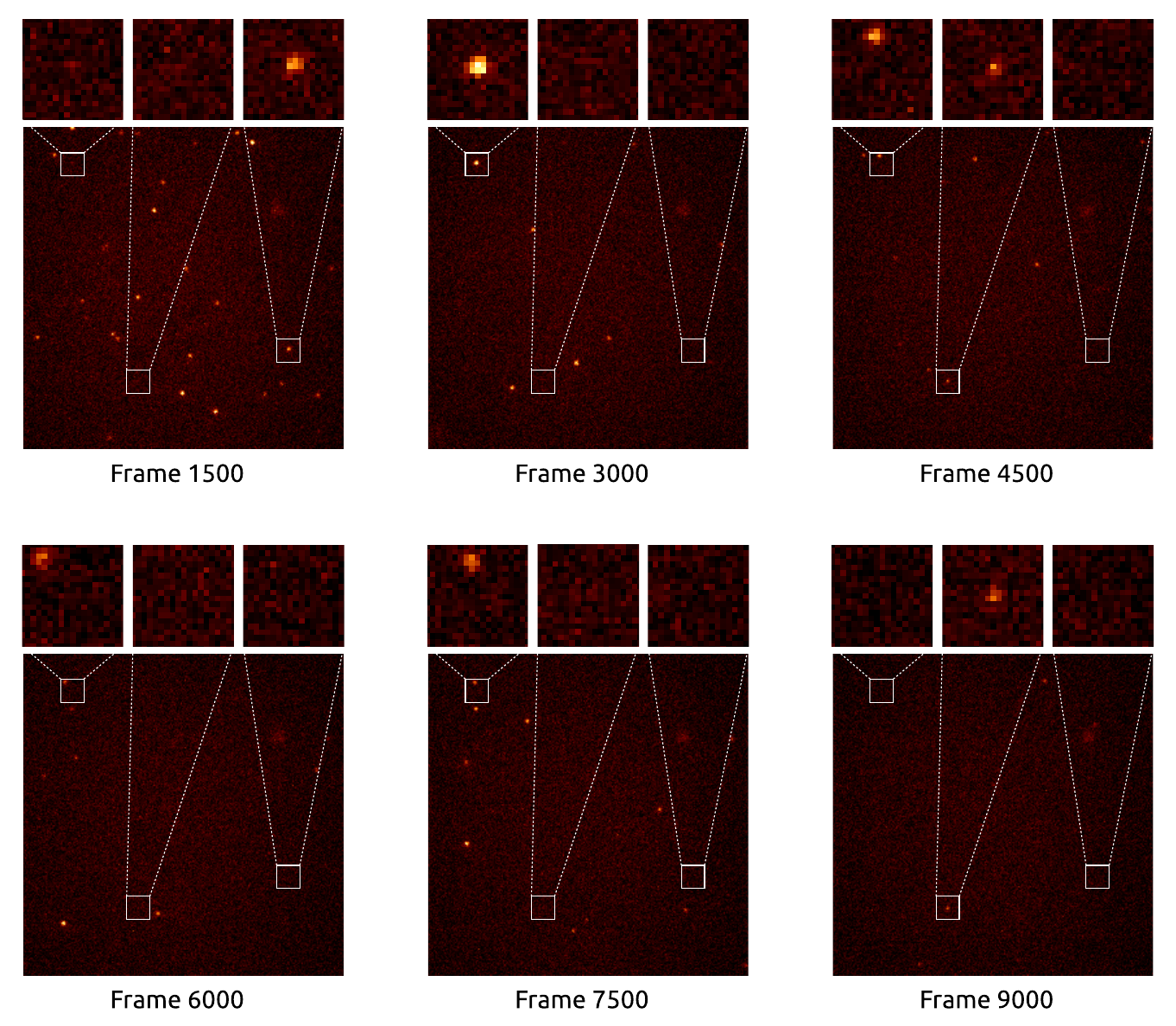}
  \caption{Six selected frames from the data set of recorded origami. The
  (physical) time difference between two consecutive images in this figure is
  roughly 23 seconds. Bleaching causes the number of visible origami to decrease
  with increasing frame number, while switching causes that unbleached origami are
  visible only in some frames.}
  \label{fig:frames}
\end{figure}

To get a sense for the magnitude of $p$,
we use prior information from a similar experiment where each origami has been designed to carry
exactly one fluorophore. We calculate the average ratio between the
number of frames where the fluorophore is active (a bright spot is seen) and the
total number of frames before bleaching, which yields $\tilde{p}\approx 0.0339$
as a prior guess for $p$. Therefore, we are indeed in the difficult small-$p$
regime of the binomial $(n, p)$ problem and will estimate $n_t$ via the Bayesian
scale estimators~\eqref{eq:scale}, using the notation (SE, PME) of
Section~\ref{S:SimulationStudy}. The beta prior for SE and PME uses the parameters
$a = 2$ and $b = 2/\tilde{p} - 2 \approx 56.99$. We choose the unimodal prior with
$a=2$, as suggested by Table~\ref{Table:SimsForExample}, since we assume that our
guess $\tilde{p}$ is reasonably accurate.
Note that a finer degree of modeling would require to view $n_0$, $n_t$
and $p$ as random variables instead of constants.
However, as shown at the end of Section~\ref{S:SimulationStudy}, the Bayesian estimators we
consider are robust against fluctuations in the parameters and are therefore
suited to estimate the respective mean values.

Since most fluorophores are deliberately forced to be active in the first frame,
the relation $X_t \sim \mathrm{Bin}(n_t, p)$ does not hold initially. It
only becomes valid after the initial state has relaxed to an equilibrium, which
is why we only take into account data after frame 1500, about $23$ seconds into the experiment.
To mitigate the influence of correlations between observations (since $X_t$
and $X_{t+1}$ for a spot can not be considered independent), we also add
a waiting time of $1500$ frames between the frames we use for our analysis. In
total, we use the six frames at $t \in \{1500, 3000, 4500,
6000, 7500, 9000\}$ depicted in Figure~\ref{fig:frames}. The 94 realizations of
$X_t$ are extracted from the image data as follows: at each registered
origami position, represented by a $6\times6$ pixel ROI, the total brightness is
measured and then divided by the brightness of a single fluorophore. 
We determined the brightness of a single fluorophore from the late frames of the
experiment, where typically at most one fluorophore of each origami is active.
\begin{table}[!htb]
  \centering
  \begin{tabular}{lcc}
    estimator  & $n_0$   & $\pbleach\cdot 10^3$  \\  \toprule
    SE$(0)$    & \textbf{16} & \textbf{0.152} \\ 
    SE$(0.5)$  & \textbf{13} & \textbf{0.148} \\ 
    SE$(1)$    & 11 & 0.139 \\
    SE$(2)$    & 9  & 0.163 \\
    SE$(3)$    & 6  & 0.123 \\
    SE$(5)$    & 5  & 0.114 \\
    PME$(0)$ & \textbf{16} & \textbf{0.167} \\
    \bottomrule
  \end{tabular} 

  \vspace{0.1cm}
 
  \caption{Estimates of the bleaching probability $\pbleach$ and the number $n_0$
    of fluorophore molecules on single DNA origami.}
  \label{Table:OrigamiEstimates-TimeFrame}
\end{table}

The results for the scale estimator $\mathrm{SE}(0.5)$ are depicted in 
Figure~\ref{Fig:Fit}, which shows the log-linear fit for model
\eqref{E:N0_Bleaching}.
The point estimates of $n_0$ and $\pbleach$ for different estimators are summarized
in Table~\ref{Table:OrigamiEstimates-TimeFrame}.
Given that the true $n_0$ in this experiment is expected to be
between 13 and 17, we can see that the scale estimators with an improper prior
($\gamma\leq1$) produce the most reasonable results. This is in agreement with
our observations in Section~\ref{S:SimulationStudy},
where we noted that priors putting a lot of weight on
large values of $n$ perform better for small $p$ by correcting for the inherent tendency to
underestimate (see Table~\ref{Table:SimsForExample}). To illustrate the
difficulty of this problem, Figure~\ref{Fig:BarChart} shows exemplary counting
results for $t \in \{1500, 7500\}$. Note that estimates
for each $n_t$ are exclusively based on observations $X_t \leq 3$, where
a great majority is even zero.

\begin{figure}[t!]
  \centering
  \subcaptionbox{\label{Fig:Fit}}{
  \includegraphics[width=.58\linewidth]{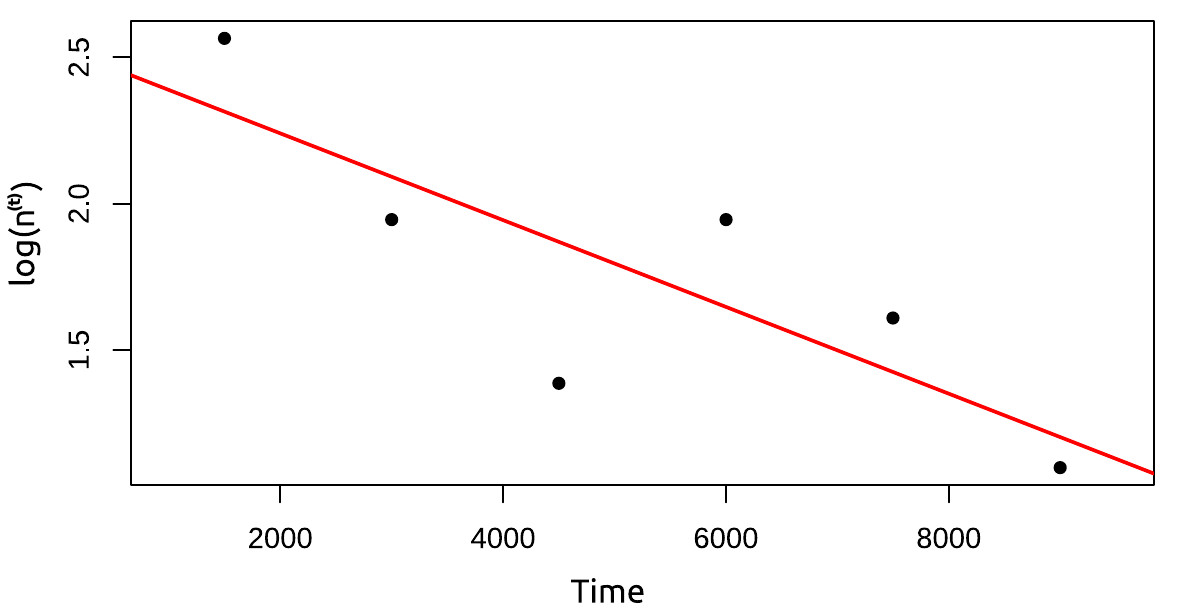}}\hspace{0.5cm}
  \subcaptionbox{\label{Fig:BarChart}}{
  \includegraphics[width=.33\linewidth]{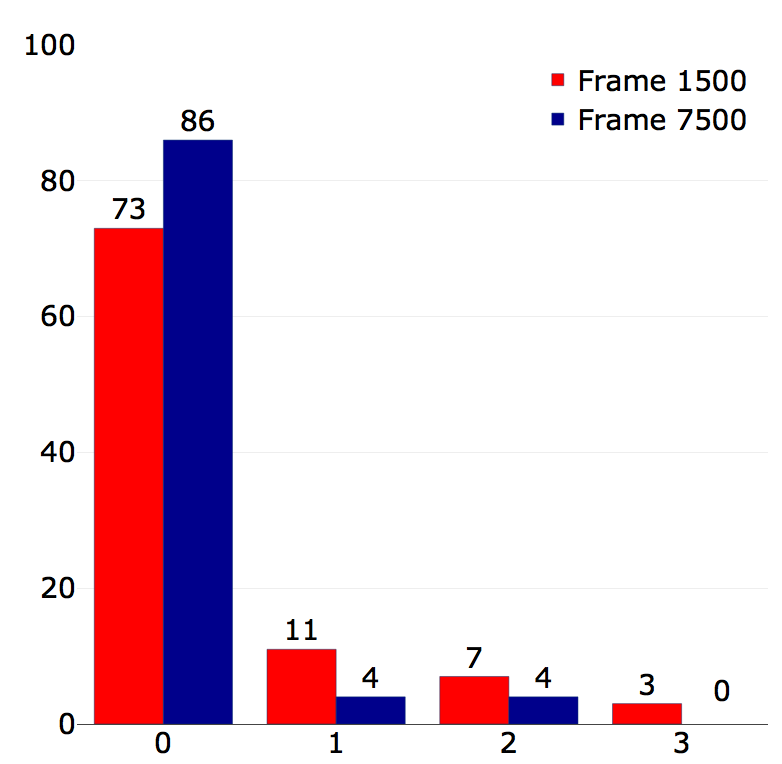}}
  \caption{(a) Log-linear fit described by $n_t = n_0\,(1-\pbleach)^t$ for the SE
    with $\gamma = 0.5$. (b) Bar charts of the observed numbers of fluorophore molecules for time
    frames 1500 and 7500.}
\end{figure}


\section{Proofs and Auxiliary Results}\label{S:Proof}
In the following, we prove the posterior contraction
result of Theorem~\ref{result}, beginning by an outline of the main ideas.

\paragraph{Outline and comments}
Throughout the proof, we fix some $\lambda > 1$ and consider a
generic sequence $(n_k, p_k)_k$ of parameters that satisfies $(n_k, p_k) \in
\mathcal{M}_k(\lambda)$ for all $k\in\mathbb{N}$ with $\mathcal{M}_k(\lambda)$ as defined in \eqref{eq:mathcalM}.
Since the convergence in Theorem~\ref{result} is uniform over $\mathcal{M}_k(\lambda)$,
we emphasize that our arguments are indeed independent of the specific choice
of $(n_k, p_k)_k$ and all bounds are controlled by the
parameter $\lambda$ alone. For brevity, we usually write $\mathbb{P}_k$ and $\E_k$
instead of $\mathbb{P}_{n_k, p_k}$ and $\E_{n_k, p_k}$ from now on.

Let $A_k\subset\mathbb{N}$ be a series of sets that do not contain the true parameter value $n_k$.
The first step of the proof consists of bounding the (marginal) posterior probability $\Pi(n \in A_k\,|\,\data)$
in terms of fractions $L_{a,b}(n)/L_{a,b}(n_k)$ of beta-binomial
likelihoods (defined in \eqref{E:posteriorN}) for integers $n\in A_k$.
Recall that $M_k$ denotes the sample maximum and $S_k =\sum_{i=1}^k X_i$ the sample sum.
Consider the function
$R : [0, \infty)\times(0,\infty) \times [M_k, \infty) \to (0,\infty)$,
\begin{equation}\label{E:helperR}
  R(a, b, m)
  = \prod_{i=1}^k \frac{\Gamma(m+1)}{\Gamma(X_i+1)\,\Gamma(m-X_i+1)}\,
    \frac{\Gamma(km- S_k+b)\,\Gamma( S_k +a)}{\Gamma(km+a+b)},
\end{equation}
which is well defined (even for $a=0$) if $S_k > 0$.
In particular, note that $R(a, b, n) = L_{a, b}(n)$ for
$a,b\in(0, \infty)$ and $n\ge M_k$, so that one can write
\begin{equation}\label{eq.R_ratio}
  \frac{R(a,b,n)}{R(a,b,n_k)}
    = \exp\left(k\,\int_{n_k}^{n}f'(m)\,\mathrm{d}m\right),
\end{equation}
where $f(m) := \frac{1}{k}\log R(a,b,m)$ is differentiable. The derivative
$f'(m)$ is studied in \cite{Hall}.

The remainder of the proof focuses on bounding $f'(m)$. This
includes the definition of an event $\mathcal{X}_k$ that satisfies
$\mathbb{P}(\mathcal{X}_k)\to 1$ for $k\to\infty$. We construct
this event in such a way that $M_k$, $S_k$, and the factorial moments $(X_i)_j$,
where $(c)_j := c\cdot(c-1)\cdots(c-j+1)$ for $c\in\mathbb{R}$ and $j\in\mathbb{N}$,
exhibit benign properties if $\data\in\mathcal{X}_k$. We also need to distinguish
between the cases $m \le n_k$ and $m > n_k$, for which we have to lower-,
respectively upper-bound $f'(m)$ on $\mathcal{X}_k$. This requires several technical
interim steps, which are largely outsourced to Section~\ref{app} in the supplement.
Combining the resulting bounds yields an upper bound for
$\Pi\big(n \in A_k\,\big|\,\data\big)$ that can be used to show
consistency in the asymptotic setting explored in Theorem \ref{result} if
the sets $A_k$ are chosen suitably.

\begin{proof}[Proof of Theorem~\ref{result}] Let $A_k\subset\mathbb{N}$ be sets
such that $n_k\not\in A_k$ for all $k$. It will later become evident how these sets
are best be chosen. First observe that
\begin{equation*}
  \Pi\big(n \in A_k \,|\, \data\big) 
    = \frac{\sum_{n\in A_k, n\ge M_k} \!L_{a,b}(n)
      \,\Pi_n(n)}{\sum_{n=M_k}^\infty L_{a,b}(n) \,\Pi_n(n)}
    \le \!\!\!\sum_{n\in A_k, n \ge M_k}
      \!\frac{L_{a,b}(n)\,\Pi_n(n)}{L_{a,b}(n_k)\,\Pi_n(n_k)}.
\end{equation*}
Under the assumption that $S_k \ge 2$, which we justify below, we can apply
Lemma~\ref{Monotonicity_a} and find
\begin{equation*}
  \frac{L_{a,b}(n)}{L_{a,b}(n_k)} 
  \le \frac{R(\lfloor a\rfloor, b, n)}{R(\lceil a\rceil, b, n_k)}
  \le c_1 \frac{kn_k}{S_k}\,
  \frac{R(\lfloor a\rfloor, b, n)}{R(\lfloor a\rfloor, b, n_k)}
\end{equation*}
for $c_1 = 2\,(1 + \lceil a \rceil + b)$, where $\lceil\cdot \rceil$ and
$\lfloor\cdot\rfloor$ denote the ceiling and floor functions, and where $R$ was
defined in \eqref{E:helperR}. It follows that
\begin{equation}\label{posterior-bound}
  \Pi\big(n\in A_k\,|\,\data\big) 
  \le c_1 \frac{kn_k}{S_k}\,
    \sum_{n\in A_k, n\ge M_k} \exp\left(k\int_{n_k}^{n} f'(m)\,\mathrm{d}m\right)
      \,\frac{\Pi_n(n)}{\Pi_n(n_k)},
\end{equation}
where $f(m) = \frac{1}{k}\log R(\lfloor a\rfloor, b, m)$. In case that $n < n_k$, we find
$\int_{n_k}^{n} f'(m)\,\mathrm{d}m = -\int_{n}^{n_k} f'(m)\,\mathrm{d}m$.
For an upper bound on the posterior we thus need a lower bound of $f'(m)$ if
$M_k \le m\le n_k$ and an upper bound if $m \ge n_k$.
Since $f$ only depends on $a$ via $\lfloor a \rfloor$, we for brevity write
$a \in\mathbb{N}_0$ to denote $\lfloor a \rfloor$ from now on.
Lemma 4.1 in \cite{Hall} states that
\begin{equation*}
  \sum_{j=1}^r \frac{1}{c-j+1} =\sum_{j=1}^r \frac{(r)_j}{(c_j)_j j}
\end{equation*}
for integers $r\in\mathbb{N}$ and positive numbers $c>k-1$. We therefore find
\begin{equation}\label{fprime}
\begin{split}
	f'(m)
    &= \frac{1}{k}\sum_{i=1}^k \sum_{j=1}^{X_i} \frac 1{m-j+1} - \sum_{j=1}^{ S_k +a} \frac 1{km+a+b-j} \\
	&= \sum_{j=1}^{M_k}\frac{ T_j-U_j}{j} -\sum_{j=M_k+1}^{ S_k+a}\frac{U_j}{j}
\end{split}	
\end{equation}
with
\begin{equation}\label{tjuj-def}
  T_j := \frac{1}{k}\sum_{i=1}^k \frac{(X_i)_j}{(m)_j}
  \qquad \text{and}\qquad
  U_j := \frac{( S_k +a)_j}{(km+a+b-1)_j}
\end{equation}
for $j \le M_k$ and $j \le S_k + a$ respectively. If $j > M_k$,
we define $T_j := 0$ for all $j > M_k$. The expectation of $T_j$ is given
by $t_j := \E_k[T_j] = (n_k)_j(p_k)^j/(m)_j$, which follows from
\begin{equation*}
  \E_k[(X_i)_j]
  = \sum_{x=j}^{n_k} \binom{n_k}{x} (x)_j\,p_k^x q_k^{n_k - x}
  = (n_k)_j(p_k)^j\,
  \underbrace{\sum_{y=0}^{n_k-j}\binom{n_k-j}{y}\,p_k^y q_k^{n_k - j - y}}_{=\,1}
\end{equation*}
for all $i= 1, \ldots, k$, where we set $q_k := 1-p_k$ and substituted $y = x - j$.

Next, recall that $\lambda > 1$ is the constant in the definition of the spaces
$\mathcal{M}_k(\lambda).$ For a fixed positive and
diverging sequence $l_k=o\big(\sqrt{\log(k)}\big)$ and $c_2
= 2\,\lambda\,(\lambda + 1)$, we introduce the events
\begin{align}\label{eq.events_proof_MR}
\begin{split}
  \mathcal{R}_k &:= \big\{ \min(n_k,l_k) \leq  M_k \le 2\log(k) \big\}, \\[0.1cm]
  \mathcal{T}_k &:= \bigcap_{j=1}^{M_k} \Big\{ (m)_j\,\big|\,T_j - t_j\big| \le \sqrt{(c_2j)^j\,l_k\log(k)/k}\,\Big\}, \\[0.1cm]
  \mathcal{S}_k &:= \Big\{ \big|S_k - kn_kp_k\big| \le \sqrt{\lambda\,k\log(k)}\Big\},
\end{split}  
\end{align}
and denote their intersection $\,\mathcal{R}_k\, \cap\,
\mathcal{T}_k\, \cap\, \mathcal{S}_k\,$ by $\mathcal{X}_k$. The probability of
the event $\mathcal{T}_k$ is independent of $m$ due to the definition of $T_j$.
On the event $\mathcal{S}_k$, Lemma~\ref{Uj-concentration} grants us the
additional property
\begin{equation*}
  \big|U_j - u_j\big| 
    \le j \,\sqrt{\frac{\lambda \log(k)}{k}} \left(\frac{c_3}{m}\right)^j
  \qquad\text{with}\qquad
  u_j := \frac{(kn_kp_k + a)_j}{(km + a + b - 1)_j}\label{utilde-def}
\end{equation*}
for $j \le S_k + a$ and $c_3 = 2e^2(3\lambda + a + 1)$. 
If $k/\log(k) \ge 4\lambda^3$, then $k/2\lambda \le S_k \le 2\lambda k$
and $S_k \ge 2$ on $\mathcal{S}_k$. Hence, equations \eqref{posterior-bound} and
\eqref{fprime} apply on $\mathcal{X}_k$ if $k$ is sufficiently large. Also, we
can use
\begin{equation}\label{eq:c1bound}
  c_1 kn_k/S_k \le 2\lambda c_1\,n_k
\end{equation}
to bound the factor preceding the sum in \eqref{posterior-bound}.

For the remainder of the proof, we can restrict $\data$ to $\mathcal{X}_k$ since
\begin{equation}\label{restriction}
  \E_k\big[\Pi\big(n\neq n_k\,|\,\data\big)\big]
  - \E_k\big[\mathbf{1}_{\mathcal{X}_k}\Pi\big(n\neq
  n_k\,|\,\data\big)\big] \le \mathbb{P}_k\big(\mathcal{X}_k^\mathrm{c}\big) \longrightarrow 0
\end{equation}
uniformly over $\mathcal{M}_k(\lambda)$ for $k \to \infty$. To show this, we bound
\begin{equation*}
  \mathbb{P}_k\big(\mathcal{X}_k^\mathrm{c}\big) \le
  \mathbb{P}_k\big(\mathcal{S}_k^\mathrm{c}\big)
  + 2\,\mathbb{P}_k\big(\mathcal{R}_k^\mathrm{c}\big)
  + \mathbb{P}_k\big(\mathcal{T}_k^\mathrm{c} \cap \mathcal{R}_k\big).
\end{equation*}
The first contribution vanishes by the application of Chebyshev's inequality
(see, e.g., \cite{degroot2012}), because $\E_k[S_k]=kn_kp_k$ and
$\Var_k[S_k]=kn_kp_k(1-p_k) \leq k \lambda$.
The second term is controlled by Lemma~\ref{Max_cases}. For the last term,
observe that
\begin{equation*}
  \mathrm{Var}[(X_i)_j]
  \le (2j\,np\,(np+1))^j\leq (2j\,\lambda \,(\lambda+1))^j=(c_2 j)^j
\end{equation*}
by Lemma~\ref{moment_bound}. For any $r>0$, Chebyshev's inequality yields
  \begin{equation*}
    \mathbb{P}_k\bigg( \underbrace{\bigg| \frac{1}{k}\sum_{i=1}^k(X_i)_j -
      \E[(X_1)_j] \bigg| > \sqrt{\frac{r \,(c_2 j)^j}{k}}}_{=:\,\mathcal{T}_{jk}^\mathrm{c}(r)}\bigg)
    \le \frac{\mathrm{Var}[(X_1)_j]/k}{r\,(c_2 j)^j/k} \le \frac{1}{r}.
  \end{equation*}
With $r = l_k \log(k)$ and $M_k \le 2\log(k)$ on $\mathcal{R}_k$, 
\begin{align*}
  \mathbb{P}_k\big(\mathcal{T}_k^\mathrm{c}\cap\mathcal{R}_k\big) 
    &= \mathbb{P}_k\left(\bigcup\nolimits_{1\leq j\leq 2\log(k)}
    \mathcal{T}_{jk}^\mathrm{c}\big(l_k \log(k)\big) \right) \le \frac{
    2\log(k)}{l_k\log(k)} \longrightarrow 0
\end{align*}
follows. It is important to note that the upper bounds in these inequalities are
all controlled by $\lambda$, which implies that the convergence in
\eqref{restriction} is indeed uniform over $\mathcal{M}_k(\lambda)$.

\paragraph{Auxiliary lower bound} For $M_k \le m < n_k$, we prove a lower bound
for $f'(m)$. We may assume that $M_k \ge l_k \to \infty$ for $k\to\infty$ in
this case, since $\data\in\mathcal{R}_k$.  For $k$ such that $l_k \ge 4,$
equation \eqref{fprime} yields
\begin{equation}\label{lower_bound_f}
  f'(m) \ge \sum_{j=1}^4 \frac{T_j - U_j}{j} - \sum_{5}^{S_k + a} \frac{U_j}{j},
\end{equation}
as $T_j \ge 0$ for all $j$. Due to the definition of $\mathcal{M}_k(\lambda),$
we can (generously) bound $m < n_k \le \lambda\sqrt{k\log(k)}$ and
\begin{equation*}
  T_1 - U_1 = \frac{S_k}{km} - \frac{S_k + a}{km + a + b - 1} \ge
  - \frac{a+1}{km-1} \ge - 2\,\frac{\lambda(a+1)}{m^2}\sqrt{\frac{\log(k)}{k}}.
\end{equation*}
To handle the terms in \eqref{lower_bound_f} with $j \ge 2$, we exploit that
$\data\in\mathcal{T}_k$ and apply $(m)_j \ge \big(m/e^2\big)^j$ (see
Lemma~\ref{lem.falling_factorial}) in order to derive
\begin{equation*}
  \sum_{j=2}^4 \frac{\big|T_j - t_j\big|}{j} \le
  \sqrt{\frac{l_k\log(k)}{k}}~ \sum_{j=2}^4
  \left(\frac{\sqrt{c_2j}}{m/e^2}\right)^j \le
  2\,\frac{4\,c_2\,e^4}{m^2}~\sqrt{\frac{l_k\log(k)}{k}}
\end{equation*}
for sufficiently large $k$ such that $\sqrt{4\,c_2}\,e^2 / l_k < 1/2$.
Similarly, we find
\begin{equation*}
  \sum_{j=2}^{S_k+a} \frac{\big|U_j - u_j\big|}{j} \le
  \sqrt{\frac{\lambda\log(k)}{k}}~ \sum_{j=2}^{S_k + a}
  \left(\frac{c_3}{m}\right)^j \le
  2\,\frac{\sqrt{\lambda}\,c_3^2}{m^2}~\sqrt{\frac{\log(k)}{k}} 
\end{equation*}
for $m \ge l_k \ge 2 c_3$. By applying Lemma~\ref{Utilde-bound} with $c_4
= 6e^2(\lambda + a)$ and using $S_k \le 2\,k\lambda$ on $\mathcal{S}_k,$ we
furthermore observe
\begin{equation*}
  \sum_{j=5}^{S_k+a} \frac{u_j}{j}  \le \sum_{j=5}^{S_k+a} \frac{1}{j}
  \left(\frac{c_4}{m}\right)^j \le 2\,\left(\frac{c_4}{m}\right)^5 
\end{equation*}
for all $k$ (and thus $m$) that are sufficiently large. The first result of
Lemma~\ref{Utilde-Tj-bound}
combined with $m < n_k \le \lambda \sqrt{k\log(k)}$ reveals
\begin{equation*}
  \sum_{j=2}^4 \frac{t_j - u_j}{j} \ge
  \frac{1}{2\lambda^2} \frac{n_k - m}{n_km^3} - \frac{3\,c_5^4}{mk} \ge
  \frac{1}{2\lambda^2} \frac{n_k - m}{n_km^3} - 2\,\frac{2\,\lambda\,c_5^4}{m^2}
  \sqrt{\frac{\log(k)}{k}},
\end{equation*}
where $c_5 = 3\,\lambda (1+a+b) + 2\,a + 4$. All bounds calculated
above can be inserted into inequality \eqref{lower_bound_f}, yielding
\begin{align}
  f'(m) &\ge (T_1 - U_1) + \sum_{j=2}^4 \frac{T_j - t_j}{j}
    + \sum_{j=2}^{S_k+a} \frac{u_j - U_j}{j} + \sum_{j=2}^4
    \frac{t_j - u_j}{j} - \sum_{j=5}^{S_k+a}
    \frac{u_j}{j} \nonumber \\
  &\ge C_1 \frac{n_k - m}{n_km^3}
    - \frac{C_2}{m^2}\,\sqrt{\frac{l_k\log(k)}{k}}
    + \underbrace{\left[\frac{1}{4\lambda^2}\frac{n_k-m}{n_km^3}
    - 2\,\left(\frac{c_4}{m}\right)^5\right]}_{=:\,h(m)}
    \label{auxiliary_lower_bound}
\end{align}
with $C_1=1/(4\lambda^2)$ and $C_2 = 2\,\big(\lambda(a+1)
+ 4\,c_2\,e^4 + \sqrt{\lambda}\,c_3^2 + 2\,\lambda\,c_5^4\big)$. 

\paragraph{Auxiliary upper bound} We next provide an upper bound for $f'(m)$ for
$m > n_k \ge M_k$. Unlike for the lower bound, we can not assume that $m$ becomes
larger than any given constant with increasing $k$ as $n_k$ could stay bounded.
Since $U_j$ is nonnegative, we can derive
\begin{equation*}
  f'(m) \le \sum_{j=1}^{M_k} \frac{T_j - U_j}{j}
\end{equation*}
from equation \eqref{fprime}. For $j = 1,$
\begin{equation*}
  T_1 - U_1 = \frac{S_k}{km} - \frac{S_k + a}{km + a + b - 1} \le \frac{S_k\,(a
  + b)}{km\,(km - 1)} \le
  \frac{4\lambda\,(a+b)}{m^2} \sqrt{\frac{\log^3(k)}{k}},
\end{equation*}
where we used that $S_k \le 2\lambda\,k$ on the event $\mathcal{S}_k$.
Next we set $m_0 := 4\,c_2\,e^4$ and derive
\begin{align*}
  \sum_{j=2}^{M_k} \frac{\big|T_j - t_j\big|}{j} &\le
  \sqrt{\frac{l_k\log(k)}{k}}~ \sum_{j=2}^{M_k}
  \left(\frac{\sqrt{c_2j}}{m/e^2}\right)^j \\
  &\le \frac{c_2M_k\,e^4}{m^2}~\sqrt{\frac{l_k\log(k)}{k}}~ \sum_{j=0}^{\lfloor
  m\rfloor} \left(\frac{e^2\sqrt{c_2}}{\sqrt{m}}\right)^j \\
  &\le \frac{c_2M_k\,e^4}{m^2}~\sqrt{\frac{l_k\log(k)}{k}} \cdot
  \begin{cases}
    2 &\text{if}~ m > m_0 \\
    m_0 \,(e^2\sqrt{c_2} + 1)^{m_0} &\text{if}~ m \le m_0
  \end{cases} \\
  &\le \frac{c_6}{m^2}~\sqrt{\frac{l_k\log^3(k)}{k}}
\end{align*}
for $c_6 = 2\,c_2\,e^4\big(m_0\,(e^2\sqrt{c_2} + 1)^{m_0}
+ 2\big)$. In the last step, we used that $M_k \le 2\log(k)$ on the event
$\mathcal{R}_k$.
In a similar fashion, we can establish the bound
\begin{equation*}
  \sum_{j=2}^{M_k} \frac{\big|U_j - u_j\big|}{j} \le
  \sqrt{\frac{\lambda\log(k)}{k}}~ \sum_{j=2}^{M_k} \left(\frac{c_3}{m}\right)^j
  \le \frac{c_7}{m^2}~\sqrt{\frac{\log^3(k)}{k}},
\end{equation*}
where $c_7 = 4\,\sqrt{\lambda}\,c_3^2\,\big(c_3^{2c_3+1} + 1\big)$. 
Finally, we apply the second claim of Lemma~\ref{Utilde-Tj-bound} and obtain
\begin{equation*}
  \sum_{j=2}^{M_k} \frac{t_j - u_j}{j} \le - C_1'\,\frac{m - n_k}{n_km^3} + c_8
  \frac{\log(k)}{m^2 k}
\end{equation*}
with $C_1' = \big(\lambda^2\left(a+b+1\right)^2 \big)^{-1}$ and $c_8
= 36(1+\lambda)^3(1+a+b)^2$ for sufficiently large $k$. We conclude
\begin{align}
  f'(m) &\le (T_1 - U_1) + \sum_{j=2}^{M_k} \frac{T_j - t_j}{j}
  + \sum_{j=2}^{M_k} \frac{u_j - U_j}{j}
  + \sum_{j=2}^{M_k}\frac{t_j - u_j}{j} \nonumber \\
  &\le -\frac{C_1'}{m^3}\frac{m - n_k}{n_k}
  + \frac{C_2'}{m^2}~\sqrt{\frac{l_k\,\log^3(k)}{k}}
  \label{auxiliary_upper_bound}
\end{align}
for $C_2' = 4\lambda\,(a+b) + c_6 + c_7 + c_8$.

\paragraph{Posterior bound}
By applying the two inequalities \eqref{auxiliary_lower_bound} and
\eqref{auxiliary_upper_bound} for $m < n_k$ and $m > n_k$, we can now bound the
posterior probability $\Pi\big(n\in A_k \,|\,\data\big)$ on the event
$\mathcal{X}_k$ through equation \eqref{posterior-bound}. Recall that we can
assume $n\neq n_k$ due to the assumption $n_k\not\in A_k$. We observe that
\begin{equation*}
  \int_{n_k}^{n} \frac{m - n_k}{n_km^3}\,\mathrm{d}m = \frac{1}{2}
  \frac{(n-n_k)^2}{(n_kn)^2} \quad~~\text{and}\quad~~\int_{n_k}^{n}
  \frac{1}{m^2}\,\mathrm{d}m = \frac{1}{2}\frac{(n-n_k)^2}{(n_kn)^2}
  \frac{2n_kn}{n - n_k}.
\end{equation*}
Noting $|n - n_k| \ge 1$, it also holds that
\begin{equation}\label{n_over_nk_ineq}
  \left|\frac{n}{n_k} - 1\right| = \frac{|n - n_k|}{n_k} \ge \left.\begin{cases}
    1/n_k &\text{if}~n_k\le 2n \\
    1/2 &\text{if}~n_k > 2n \\
  \end{cases}\right\}
  \ge \frac{1}{2n}\,.
\end{equation}
Therefore, if $l_k \le n < n_k$, the function $h(m)$ introduced in equation
\eqref{auxiliary_lower_bound} satisfies
\begin{align}\label{hm_integral}
  \int_n^{n_k}h(m) \,\mathrm{d}m &= \frac{C_1}{2}\,\frac{(n-n_k)^2}{(n_kn)^2}
  - \frac{c_4^5}{2}\,\frac{n_k^4 - n^4}{(n_kn)^4} \nonumber \\
  &\ge \frac{C_1}{2} \,\frac{(n-n_k)^2}{(n_kn)^2}\left(1
  - \frac{4c_4^5}{C_1}\frac{1}{1-n/n_k}\frac{1}{n^2}\right) \ge 0
\end{align}
for $k$ such that $l_k \ge 8\,c_4^5/C_1$.
Employing bound \eqref{auxiliary_lower_bound} thus yields
\begin{align*}
  -k \int_n^{n_k} f'(m)\,\mathrm{d}m &\le -k \frac{C_1}{2}
  \frac{(n-n_k)^2}{(n_kn)^2} \left(
  1 - C \frac{n_kn}{n_k-n}\sqrt{\frac{l_k\log(k)}{k}}\right),
  \intertext{where the constant $C$ is given by $2\,C_2/C_1$.
  On the other hand, for $n_k < n$, bound \eqref{auxiliary_upper_bound} similarly
  leads to}
  k \int_{n_k}^n f'(m)\,\mathrm{d}m &\le -k \frac{C_1'}{2}
  \frac{(n-n_k)^2}{(n_kn)^2} \left( 1 - C'
  \frac{n_kn}{n-n_k}\sqrt{\frac{l_k\log^3(k)}{k}}\right)
\end{align*}
for $C' = 2\,C_2'/C_1'$. 
Finally, let $\widetilde{C}_1 = \min\!\big\{C_1, C_1'\big\}$ and $\widetilde{C}
= \max\!\big\{C, C'\big\}$.  Combining the two inequalities for $n_k < n$ and
$n_k > n$ results in
\begin{equation*}
  k\int_{n_k}^n f'(m)\,\mathrm{d}m
  \le -k \frac{\widetilde{C}_1}{2\,n_k^2}\left(\frac{n_k}{n} - 1\right)^2
    \left( 1 - \frac{\widetilde{C}\,n_k}{|1 - n_k/n|} \sqrt{\frac{l_k\log^3(k)}{k}} \right)
\end{equation*}
for all $n \neq n_k$ with $n \ge M_k$. In order to bound $\Pi\big(n\in
A_k\,|\,\data\big)$ via \eqref{posterior-bound}, we need that the second factor
in this expression is positive for large $k$. Since $l_k \log^3(k)
= o\big(\log^{7/2}(k)\big)$, this motivates the choice
\begin{equation*}
    A_k := \left\{n\in\mathbb{N} \,\bigg|\, |n_k - n| \ge n n_k
    \frac{\log^{\rho}(k)}{2\sqrt{k}}\right\}\qquad\text{with}\qquad\rho = 7/4.
\end{equation*}
For $n\in A_k$ and $k$ large enough, we thus find
\begin{equation*}
  k\int_{n_k}^n f'(m)\,\mathrm{d}m
  \le -k \frac{\widetilde{C}_1}{4\,n_k^2}\left(\frac{n_k}{n} - 1\right)^2
\end{equation*}
Applying the inequalities \eqref{posterior-bound} and \eqref{eq:c1bound}
combined with the constraint $\Pi_n(n) \ge \beta n^{-\alpha}$ for all
$n\in\mathbb{N}$ on the (proper) prior yields
\begin{align}\label{bounding_prior}
  \mathbf{1}_{\mathcal{X}_k}\Pi\big(n\in A_k\,|\,\data\big) 
  &\le 2\lambda c_1\, n_k \sum_{n\in A_k}
    \exp\!\left(-\frac{\widetilde{C}_1}{4}\frac{k}{n_k^2}\left(\frac{n_k}{n}-1\right)^2\right)
    \frac{\Pi_n(n)}{\Pi_n(n_k)} \\
  &\le \frac{2\lambda
    c_1}{\beta}\exp\!\left(-\frac{\widetilde{C}_1}{2}\log^{2\rho}(k) + (\alpha
    + 1) \log(n_k)\right) \longrightarrow 0 \nonumber
\end{align}
as $k\to\infty$ uniformly over $\mathcal{M}_k(\lambda)$. Due to
\eqref{restriction}, we have therefore established uniform convergence of
$\E_k\big[\Pi\big(n \in A_k\,|\,\data\big)\big]$ to $0$.  To bring this result
in the form of Theorem~\ref{result}, we just have to note that
\begin{equation*}
    A_k^\mathrm{c} \subset \left\{n\in\mathbb{N} \,\bigg|\, |n_k - n| \le n_k^2\,\frac{\log^{\rho}(k)}{\sqrt{k}}\right\}
\end{equation*}
whenever $k$ is large enough such that $n_k\log^\rho(k)/\sqrt{k} < 1/2$.
\end{proof}

\subsection*{Auxiliary results for binomial random variables}

We begin with a bound on the variance of falling factorials of binomial random
variables.

\begin{lem}\label{moment_bound}
Let $X\sim\Bin(n,p)$ and $j\in\{0,\dots,n\}$. Then
\begin{equation*}
  \mathrm{Var}_{n,p}\big[(X)_j^2\big]
  \le \E_{n,p}\big[(X)_j^2\big]
  \leq \big(2j\, np\, \max(np,1)\big)^j.
\end{equation*}
\end{lem}

\begin{proof}
We have $\E_{n,p}[(X)_j^2] = \partial_s^j\partial_t^j\,\E_{n,p}[(st)^X]$
evaluated at $s=t=1$. Since $X\sim\Bin(n, p)$, it holds that
\begin{equation*}
\E_{n,p}\big[(st)^X\big]=(q+pst)^n
\quad\text{and}\quad
\partial_t^j\,\E_{n,p}\big[(st)^X\big]=(n)_j (ps)^j(q+pst)^{n-j}
\end{equation*}
for $q = 1 - p$. By the general Leibniz rule for derivatives of products,
\begin{align*}
    \partial_s^j\partial_t^j\,\E_{n,p}\big[(st)^X\big]
    = (n)_j p^j \sum_{r=0}^j \binom{j}{r} (j)_r s^{j-r}
    (n-j)_{j-r}(pt)^{j-r}(q+pst)^{n-2j+r}\!.
\end{align*}
Setting $s=t=1$ implies $q+pst=1$, such that the last equation becomes
\begin{equation*}
  \E_{n,p}\big[(X)^2_j\big] = (n)_j p^j \sum_{r=0}^j \binom{j}{r} (j)_r (n-j)_{j-r}p^{j-r}.
\end{equation*}
The claim of the lemma now follows by bounding $(n)_j\leq n^j,$ $(j)_r\leq j^j$,
and $(n-j)_{j-r}p^{j-r}\leq (np)^{j-r}\leq \max(np,1)^j$, and using that
$\sum_{r=0}^j \binom{j}{r} = 2^j.$
\end{proof}

The next result characterizes the growth of the sample maximum $M_k$ in the
context of Theorem~\ref{result}.

\begin{lem}\label{Max_cases}
  Let $(l_k)_{k\in\mathbb{N}}$ be such that $l_k\rightarrow\infty$ and $l_k^2
  = o\big(\log(k)\big)$. Then
  \begin{equation*}
    \sup_{(n_k,p_k) \in \mathcal{M}_k(\lambda)}\mathbb{P}_k\big(\min\{l_k,
    n_k\}\leq M_k \leq 2\log(k) \big) \to 1 \quad \text{as} \ k \to \infty.
  \end{equation*}
\end{lem}

\begin{proof}
In the first part of the proof we show that $M_k< \min\{l_k, n_k\}$ has
(uniformly) vanishing probability for $k \to \infty$.  If $l_k \ge n_k$,
applying Bernoulli's inequality and using $l_k^2 = o(\log(k))$ yields
\begin{equation*}
  \log\mathbb{P}_{n_k,p_k}(M_k < n_k) \le -kp_k^{n_k} \le -k\,e^{-
  l_k\log(l_k/n_k p_k)}\le -k\,e^{- l_k\log(\lambda l_k)}  \to  -\infty
\end{equation*}
uniformly over $\mathcal{M}_k(\lambda)$ as $k\to\infty$. If $n_k > l_k$, we find
$p_k \le \lambda/n_k \leq \lambda/l_k \leq 1/4$ for all sufficiently large $k$,
and thus Slud's bound from \cite{Tele} can be applied. We find
\begin{align*}
  \mathbb{P}(M_k < l_k) &= \big(1 - \mathbb{P}(X_1 \ge l_k) \big)^k
  \le \Phi\!\left(\frac{l_k}{\sqrt{n_kp_k(1-p_k)}}\right)^k \le
  \Phi\!\left(\frac{\sqrt{2}\,l_k}{\sqrt{n_kp_k}\,}\right)^k,
\end{align*}
where $\Phi$ denotes the cumulative density function of the standard normal
distribution.  \cite{Gordon1941} derives the lower tail bound $\smash{1
- \Phi(t) \ge \frac{1}{2\pi} \frac{t}{t^2+1}\,e^{-t^2/2}}$ for $t > 0$. We apply
this bound for $t = \sqrt{2} \,l_k / \sqrt{n_kp_k}$ with $t > 1$ if $k$ is
sufficiently large.  Combined with the elementary inequality $t/(t^2+1) \ge
1/2t^2$ for $t \ge 1$ and $l_k^2 = o\big(\log(k)\big)$, we conclude
\begin{align*}
  \Phi\left(\frac{\sqrt{2}\,l_k}{\sqrt{n_kp_k}\,}\right)^k &\le \left(1
  - \frac{n_kp_k}{8\pi\,l_k^2}\,e^{-\frac{l_k^2}{n_kp_k}}\right)^k
  \le  \exp\left(-\frac{k}{8\pi}\,\frac{n_kp_k}{l_k^2\,e^{l_k^2/n_kp_k}}\right)\to 0
\end{align*}
as $k \to \infty$. The convergence is uniform over $(n_k,p_k)_k \in
\mathcal{M}_k(\lambda)$.
  
It remains to show that the probability of $M_k\leq  2\log(k)$ uniformly
converges to one for $k \to \infty$.  To see this, write $X_1$ as a sum of $n_k$
i.i.d.\ Bernoulli random variables with success probability $p_k.$ Since
Hoeffding's inequality is not precise for small $p_k$, we apply Bernstein's
inequality (see, e.g., \cite{vdvaart1996}), and conclude
\begin{align*}
  \mathbb{P}\left( M_k \leq 2\log(k)  \right) 
  &= \big( 1-\mathbb{P}\big( X_1-n_kp_k > 2\log(k) - n_kp_k \big)\big)^k\\
  &\geq \left(1 - \exp\!\left(-\frac{ \big(2\log(k)
    - n_kp_k\big)^2}{2\big(n_kp_k(1-p_k) + \log(k)/3\big)} \right)\right)^k\\
  &\geq \big(1 - e^{-2\log(k)}\big)^k \to 1,
\end{align*}
where the second inequality holds for $\log(k) \ge 3\lambda \geq 3n_kp_k$.
\end{proof}

\paragraph{Consistency of the sample maximum}
The following lemma examines the consistency of the sample maximum in the
binomial $(n,p)$ problem if $np \to \mu > 0$ with increasing sample size $k$.

\begin{lem}\label{Consistency} 
Let $0< c<1<C$ and let $M_k:= \max_{i=1,\dots,k}X_i$ be the sample maximum for
independent random variables $X_1,\dots,X_k \sim\Bin(n_k,p_k)$ such that
$n_kp_k\to\mu > 0$. Then, as $k \to \infty$,
\begin{align*}
    \mathbb{P}(M_k=n_k) \to 
    \begin{cases}
    1  &\text{if} \ \  n_k\log(n_k)<c \log(k), \\
    0  &\text{if} \ \  n_k\log(n_k)> C\log(k).
    \end{cases}
\end{align*}
\end{lem}

\begin{proof}

  As mentioned in the introduction, \eqref{max_exp_fast} implies
  $1-e^{-kp_k^{n_k}}\le \mathbb{P}(M_k=n_k) \leq kp_k^{n_k}.$ To show that
  $\mathbb{P}(M_k=n_k)\to 1,$ it is sufficient to prove that $kp_k^{n_k} \to
  \infty.$
  This holds if $\log(k) - n_k\log(n_k/n_kp_k) \to \infty$, which in turn follows
  from
  \begin{equation*}
    \frac{n_k\log(n_k)}{\log(k)} < c < 1\qquad \text{and}\qquad \frac{n_k\,|\log(n_kp_k)|}{\log(k)} \le \frac{c\,|\log(n_kp_k) |}{~\log\big(c\log(k)\big)}.
  \end{equation*}
  To show convergence to zero, we use $P(M_k = n_k) \le kp_k^{n_k} \le
  \exp\big(\log(k) - n_k\log(n_k/n_kp_k)\big)$.  Similar to the argument above,
  the right hand side in this inequality converges to $0$ since
  \begin{equation*}
    \frac{n_k\log(n_k)}{\log(k)} > C > 1\qquad \text{and}\qquad \frac{n_k\,|\log(n_kp_k)|}{\log(k)} \le \frac{C\,|\log(n_kp_k) |}{~\log\big(C\log(k)\big)}.
  \end{equation*}

\end{proof}

\section*{Acknowledgements}
We would like to thank the reviewers and are particularly grateful to one
referee for a detailed report with additional insights and hints to the
literature. These comments have lead to a substantial improvement of the
article.
Support of DFG CRC 755 (A6), Cluster of Excellence MBExC, and DFG RTN 2088 (B4)
is gratefully acknowledged. JSH was supported by a TOP II grant from the NWO. We
also thank Oskar Laitenberger for providing us with data recorded at the
Laser-Laboratorium Göttingen e.V.

\section*{Supplementary video: fluorescence microscopy}
Video of the first 9000 frames of the data used for estimating the fluorophore
number in Section~\ref{S:DataExample}
(\href{http://www.stochastik.math.uni-goettingen.de/SMS-movie.mp4}{http://www.stochastik.math.uni-goettingen.de/SMS-movie.mp4}).


\bibliographystyle{chicagoa}
\bibliography{binomial}

%
%
\clearpage
\begin{center}\bfseries
  SUPPLEMENT
\end{center}
\renewcommand\thesection{\Alph{section}}
\setcounter{section}{0}

\section{Auxiliary technicalities}\label{app}

The following result is needed in the beginning of the proof of
Theorem~\ref{result}, as we have to bound the beta-binomial likelihood in terms
of integer valued parameters $a$ in order to apply Lemma 4.1 in
\cite{Hall}.

\begin{lem}\label{Monotonicity_a}
  For $k, n, s\in\mathbb{N}$ and $b > 0$ such that $2 \le s \le kn$ define the
  function
  \begin{equation*}
    f(a) = \frac{\Gamma\big(s + a\big)}{\Gamma(kn+a+b)}
  \end{equation*}
  for $a \ge 0$. Then $f$ is monotonically decreasing and $f\big(\lfloor
  a\rfloor\big) / f\big(\lceil a\rceil\big) \le c\,kn/s$ for $c \ge 2\,(1
  + \lceil a \rceil + b)$.
\end{lem}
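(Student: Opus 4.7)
The factor $\Gamma(kn-s+b)$ is independent of $a$, so everything reduces to the behaviour of
\[
g(a) := \frac{\Gamma(s+a)}{\Gamma(kn+a+b)}.
\]
My plan is to establish monotonicity via the digamma function and to handle the ratio bound by reducing to the case where $\lceil a\rceil$ and $\lfloor a\rfloor$ differ by one.

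For the monotonicity, the logarithmic derivative is
\[
\frac{g'(a)}{g(a)} = \psi(s+a) - \psi(kn+a+b),
\]
where $\psi$ is the digamma function. Since $\psi$ is strictly increasing on $(0,\infty)$ and the hypothesis $s\le kn$ together with $b>0$ gives $s+a < kn+a+b$, the right-hand side is strictly negative. Hence $g$, and therefore $f$, is strictly decreasing on $[0,\infty)$.

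For the ratio, if $a\in\mathbb{N}_0$ then $\lfloor a\rfloor=\lceil a\rceil$ and the ratio equals $1\le c\,kn$ trivially (since $kn\ge s\ge 2$ and $c\ge 1$). Otherwise $\lceil a\rceil=\lfloor a\rfloor+1$, and by the functional equation $\Gamma(x+1)=x\,\Gamma(x)$ the gamma factors telescope to
\[
\frac{f(\lfloor a\rfloor)}{f(\lceil a\rceil)} = \frac{\Gamma(s+\lfloor a\rfloor)\,\Gamma(kn+\lceil a\rceil + b)}{\Gamma(s+\lceil a\rceil)\,\Gamma(kn+\lfloor a\rfloor + b)} = \frac{kn+\lfloor a\rfloor + b}{s+\lfloor a\rfloor}.
\]
Using $s\ge 2$, $\lfloor a\rfloor\ge 0$, and $kn\ge 1$, we estimate
\[
\frac{kn+\lfloor a\rfloor + b}{s+\lfloor a\rfloor} \;\le\; \frac{kn + \lceil a\rceil + b}{2} \;\le\; \frac{kn\bigl(1 + \lceil a\rceil + b\bigr)}{2} \;\le\; c\,kn,
\]
which proves the claim.

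There is no real obstacle here; the only point to watch is the distinction between integer and non-integer $a$, and the use of $s\ge 2$ to control the denominator in the telescoped ratio. The estimate $kn+\lceil a\rceil+b \le kn(1+\lceil a\rceil+b)$ uses only $kn\ge 1$, which follows from $s\ge 2$ and $s\le kn$.
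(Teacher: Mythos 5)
Your proof is correct and follows essentially the same route as the paper's: monotonicity via log-convexity of $\Gamma$ (you use the digamma derivative, the paper the discrete convexity inequality for $\log\Gamma$), and the ratio bound by telescoping the Gamma functions and using $s\ge 2$ and $kn\ge 1$. Your telescoped expression $\frac{kn+\lfloor a\rfloor+b}{s+\lfloor a\rfloor}$ is in fact the cleaner one (the paper's displayed intermediate ratio carries a spurious $-1$ in the denominator), but both yield the same final bound.
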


\begin{proof}
  It is sufficient to look at $h(a) := \Gamma\big(y + a\big) / \Gamma(z + a)$,
  where $2 \le y < z$ are fixed. For $\epsilon > 0$, we find that $\log h(a
  + \epsilon) \le \log h(a)$ is equivalent to
  \begin{equation*}
    \gamma(y + a + \epsilon) - \gamma(y + a) \le\gamma(z + a + \epsilon)
    - \gamma(z + a)
  \end{equation*}
  with $\gamma(t) = \log \Gamma(t)$ for $t > 0$. This inequality is true since
  $\gamma$ is convex, see \cite{Merkle1996}, which therefore establishes
  monotonicity. We also find
  \begin{equation*}
    \frac{h\big(\lfloor a \rfloor\big)}{h\big(\lceil a \rceil\big)}
      \le \frac{z + \lceil a\rceil - 1}{y + \lfloor a \rfloor - 1}
      \le \frac{z + \lceil a \rceil}{y-1} \le 2\,\frac{z + \lceil a \rceil}{y}.
  \end{equation*}
  Substituting $y = s$ and $z = kn + b$, and using that $kn + \lceil a \rceil
  + b \le (1 + \lceil a\rceil + b)\,kn$ yields the second result.
\end{proof}

Recall that $(c)_j = c(c-1)\cdots(c-j+1)$ denotes the $j$-th falling factorial
for real values $c$.  Factorials like these have their origin in Lemma 4.1 in
\cite{Hall} and play a prominent role in our proof.

\begin{lem}
  \label{lem.falling_factorial}
  Let $j\in\mathbb{N}$ and $n, m > 1$ with $j \le \min\{m, n\}$. Then
  \begin{enumerate}
    \item[i)] $(m/e^2)^j \le (m)_j \le m^j$.
    \item[ii)] $\frac{m^j(n)_j}{n^j(m)_j} \ge 1 + \frac{n-m}{nm}$ for $n \ge m$
    and $j > 1$.
  \end{enumerate}
\end{lem}

\begin{proof}
 
To prove the first statement of the lemma, we apply the Stirling type bounds
$\sqrt{2\pi}\, m^{m+1/2} e^{-m} \le \Gamma(m+1) \le e\,\sqrt{2\pi}\,m^{m+1/2}
e^{-m}$ of Theorem~1 in \cite{Jameson2015}, which can be used to derive
  \begin{equation*}
    (m)_j = \frac{\Gamma(m+1)}{\Gamma(m-j+1)} \ge \frac{1}{e}
    \left(\frac{m}{m-j}\right)^{m-j + 1/2} \left(\frac{m}{e}\right)^j \ge
    \left(\frac{m}{e^2}\right)^j.
  \end{equation*}
To see the second statement, assume $n \ge m$, $j > 1$, and bound
  \begin{equation*}
  \frac{m^j(n)_j}{n^j(m)_j} = \prod_{i=0}^{j-1} \frac{m(n-i)}{(m-i)n}
  = \prod_{i=0}^{j-1}\left(1+i\frac{n-m}{n(m-i)}\right) \ge 1 + \frac{n-m}{nm}.
  \end{equation*}
 
\end{proof}

When we construct the event $\mathcal{X}_k$ in the proof of
Theorem~\ref{result}, we rely on proper behavior of the random variables $U_j$
and $T_j$ introduced in~\eqref{tjuj-def}. In particular, we exploit that $U_j$
concentrates around
\begin{equation}\label{eq:smallujdef}
    u_j = \frac{(knp + a)_j}{(km + a + b - 1)_j},
\end{equation}
which is a surrogate for $\mathbb{E}_{n,p}[U_j]$. The next three results
characterize the concentration of $U_j$ around $u_j$, the decay of $u_j$ as $k$
and $m$ become large, and the difference of $u_j$ and $t_j
= \mathbb{E}_{n,p}[T_j] = (n)_kp^k / (m)_k$.

\begin{lem}\label{Uj-concentration}
  Let $k, n, s \in \mathbb{N}$, $m > 0$ and $p\in (0,1),$ such that $k \ge 2$
  and $km \ge s$. Let furthermore $a \ge 0, b > 0$ and define $\Delta = s - knp$
  as well as
  \begin{equation*}
    U_{j} = \frac{(s+a)_j}{(km + a + b - 1)_j}
  \end{equation*}
  for $j\in\mathbb{N}$ with $j \le s + a$. Then, for any $\lambda \ge np$ with
  $\lambda > 1$, $|\Delta| \le \sqrt{\lambda\,k\log{k}}$ implies 
  \begin{equation*}
     |U_j - u_j| \le j\,\sqrt{\frac{\lambda\,\log{k}}{k}}\,
     \left(\frac{2e^2\,(3\lambda + a + 1)}{m}\right)^j.
  \end{equation*}
\end{lem}

\begin{proof}
  Let $t = \sqrt{\lambda\,k\log{k}},$ $D=knp+a$ and assume that $|\Delta| \le
  t$. Noting that $t \le k \lambda$ and thus $j \le s + a \le 2k\lambda + a$, we
  can bound $D + t + 1 + j \le (3 \lambda + a + 1)k =: ck$.
  Applying a telescoping sum, we find
  \begin{align}
  \begin{split}
    |U_j - u_j|
    &= \frac{|(D + \Delta )_j - (D)_j|}{(km + a + b - 1)_j} \\
    & \le \sum_{l=0}^{j-1} \frac{|D + \Delta|\dots|(D + \Delta  - l) - (D
      - l)|\dots |D - j + 1|}{(km + a + b - 1)_j} \\
    & \le \sum_{l=0}^{j-1} \frac{(ck)^{j-1} \, t}{(km/2e^2)^j} \le
      j \,\frac{t}{k}\,\left(\frac{2e^2\,(3\lambda + a + 1)}{m}\right)^j,
    \end{split}
    \label{eq:Uj-concentration-inequality}
\end{align}
where the denominator is bound from below by the first statement of
Lemma~\ref{lem.falling_factorial},
  \begin{equation}
    \label{eq:U-denominator-bound}
    (km + a + b - 1)_j \ge \big((km - 1)/e^2\big)^j \ge (km/(2e^2))^j.
  \end{equation}
\end{proof}

\begin{lem}\label{Utilde-bound}
  Let $k, n \in\mathbb{N}$, $m > 0$ and $p\in(0, 1)$ such that $k \ge 2$, and
  let $a \ge 0, b > 0$. Let $u_j$ as defined in \eqref{eq:smallujdef}.
  For any $\lambda \ge np$ it holds that
  \begin{equation*}
    |u_j| \le \left(\frac{6e^2(\lambda+a)}{m}\right)^j
  \end{equation*}
  if $j\in\mathbb{N}$ with $j \le 2\,k\lambda + a$ and $j < km + a + b$.
  Furthermore, if $2 \le j \le m$ and $m\le k$, then
  \begin{equation*}
    |u_j| \le \left(\frac{np}{m}\right)^j + \frac{j(3\,\lambda\,(a+b+1) + 2a
    + 4)^j}{mk}.
  \end{equation*}
\end{lem}

\begin{proof}
 Generously bounding $|knp + a - j| \le |3k(\lambda + a)|$ and therefore $|(knp
 - a)_j| \le  \big(3k(\lambda + a)\big)^j$,
 the first result follows from inequality \eqref{eq:U-denominator-bound} of the
 previous lemma.
 In case of $j\le m$ it holds that
	\begin{align*}
		\left|\frac{knp+a-i+1}{km+a+b-i}- \frac{np}{m}\right|
    &=
    \left| \frac{m(a-i+1)-np(a+b-i)}{m(km+a+b-i)} \right|\\
    &\leq \frac{im + (a+1) m + inp + (a+b)np}{m(km-i)}\\
    &\leq \frac{m^2 + m (a+1 + np\,(a+b + 1))}{m^2(k-1)}\\
    &\leq 2\,\frac{a + 2 + \lambda\,(a+b+1)}{k}
    =: \frac{c}{k}
	\end{align*}
  for each $i = 1, \dots, j$. This inequality yields the upper bound
  \begin{equation}\label{eq:utilde-bound}
    |u_j| \le \bigg(\frac{np}{m} + \frac{c}{k}\bigg)^j.
  \end{equation}
  Expanding $(x + y)^j$ as binomial sum shows the elementary inequality $(x
  + y)^j \le x^j + j\,y\,\big(x + y\big)^{j-1}$ for $x, y > 0$.
  Hence, for $j \ge 2$ and $m \le k$, we obtain
  \begin{equation*}
    |u_j| \le \left(\frac{np}{m}\right)^j + j\,\frac{c}{k}\left(\frac{np}{m}
    + \frac{c}{k}\right)^{j-1} \le \left(\frac{np}{m}\right)^j + \frac{j(c
    + \lambda)^j}{mk}. 
  \end{equation*}
\end{proof}

\begin{lem}\label{Utilde-Tj-bound}
  Let $k, j \in\mathbb{N}$ with $k \ge 2$, $m > 0$, and $a \ge 0, b > 0$.
  Assume $(n,p)\in\mathcal{M}_k(\lambda)$ for $\lambda > 1$ as defined in
  \eqref{restriction}.  If $m < n \le k$ and $2 \le j \le m$, it holds that
  \begin{equation*}
    t_j - u_j \ge \frac{1}{\lambda^{j}} \frac{n - m}{nm^{j+1}}
      - j\,\frac{(3\,\lambda\,(1+a+b) + 2a + 4)^j}{mk}.
  \end{equation*}
  If $n<m$ and $j \le n$, we have
  \begin{equation*}
    t_2 - u_2 \le - \frac{1}{\lambda^2(1+a+b)^2}\,\frac{m - n}{nm^3}
      + 2\,\frac{18(1+\lambda)^3(1+a+b)^2}{m^2k}
  \end{equation*}
  and, for $j > 2$ and $k > \lambda|n-a|$,
  \begin{equation*}
    t_j - u_j \le j\,\frac{18(1+\lambda)^3(1+a+b)^2}{m^2k}.
  \end{equation*}
\end{lem}

\begin{proof}
  Let $c=j(3\,\lambda\,(a+b+1) + 2a + 4)^j$.
  Applying the respective second statements of Lemma~\ref{Utilde-bound} and
  Lemma~\ref{lem.falling_factorial}, we observe
  \begin{equation*}
    t_j - u_j
    \ge \left[\frac{(n)_j\,p^j}{(m)_j} - \left(\frac{np}{m}\right)^j\right]
        - \frac{c}{mk}\ge \left(\frac{np}{m}\right)^j\frac{n - m}{nm} - \frac{c}{mk}
  \end{equation*}
  for $n > m$, which establishes the first result due to $np \ge 1/\lambda$ for
  $(n,p)$ in $\mathcal{M}_k(\lambda)$.
  For the second result, assume $m > n$. We look at the case $j = 2$ first.
  Recalling $1/\lambda \le np \ge \lambda$ for $(n,p)\in\mathcal{M}_k(\lambda)$,
  straightforward calculations show
  \begin{align*}
    t_2 - u_2 &=\frac{n(n-1)p^2}{m(m-1)} - \frac{(knp +a)(knp + a - 1)}{(km
    + a + b -1)(km + a + b -2)} \\
    &\le \frac{np}{m} \left( \frac{n-1}{m-1}p - \frac{np}{m}
    \frac{1-1/knp}{\big(1+(a+b)/km\big)^2} \right) \\
    &\le - \frac{np}{nm} \frac{np\,(m - n)
    - \tilde{c}\,nm/k}{m\,(m-1)\,\big(1+(a+b)/km\big)^2} \\
    &\le - \frac{1}{\lambda^2 (1+a+b)^2}\,\frac{m-n}{nm^3}
    + \frac{2\lambda\tilde{c}}{m^2k}
  \end{align*}
  with $\tilde{c} = (1 + \lambda)\,(1 + a + b)^2$. This shows the first part of
  the second statement. For $j > 2$, we consider
  \begin{align*}
    t_j - u_j
    &= \frac{(n)_j p^j}{(m)_j} - \frac{(knp + a)_j}{(km+a+b-1)_j} \\
    &= \left[\frac{(n)_j p^j}{(m)_j} - \frac{n^jp^j}{m^j}\right] + \left[
    \left(\frac{np}{m}\right)^j - \frac{(knp+a)_j}{(km+a+b-1)_j}\right].
  \end{align*}
  Applying the second part of Lemma~\ref{lem.falling_factorial} with the roles
  of $m$ and $n$ reversed (since $n < m$), we see that the first term in this
  expression is $\le 0$. Using $j\le n$ , we thus find for $k > \lambda|n-a|$
  that
  \begin{equation*}
    t_j - u_j \le \left(\frac{np}{m}\right)^j - \left(\frac{knp+a-n}{km+a+b}\right)^j.
  \end{equation*}
  Note that both terms in brackets are positive and smaller than $1$.
  Consequently, we can apply the basic inequality
  \begin{equation*}
    |x^j - y^j| \le j\,|x-y|\,(x+y)^2
  \end{equation*}
  for $x,y\in[0, 1]$ and $j \ge 3$, which can be derived by an induction
  argument. This yields
  \begin{equation*}
    t_j - u_j \le j\,\left|\frac{np}{m}
    - \frac{knp+a-n}{km+a+b}\right|\,\left(\frac{np}{m}
    + \frac{knp+a-n}{km+a+b}\right)^2.
  \end{equation*}
  The result of the lemma follows from observing that
  \begin{equation*}
    \frac{np}{m} + \frac{knp+a-n}{km+a+b} \le \frac{3\lambda}{m}
    \quad\text{and}\quad
    \left|\frac{np}{m} - \frac{knp+a-n}{km+a+b}\right| \le
    \frac{2(a+b+1)\lambda}{k}.
  \end{equation*}
  
\end{proof}

\section{Proof of Theorem \ref*{thm:bounded_n}}\label{app:thm3} 
Like for Theorem \ref{result}, the proof crucially relies on suitably bounding
a second order component of the likelihood. The following auxiliary result helps
making sure that we always observe at least some values $X_i \ge 2$ in the
setting of Theorem \ref{thm:bounded_n}.

\begin{lem}\label{helper-bounded_n}
  Let $X_1, ..., X_k\sim \mathrm{Bin}(n, p)$ be independent for $n\ge 2$ and
  $p\in(0, 1)$, and let $M_k$ denote the sample maximum. Then 
  \begin{equation*}
    P\big(M_k < 2\big) \le \exp(-p^2 k).
  \end{equation*}
\end{lem}
\begin{proof}
  For $n \ge 2$, we find
  \begin{align*}
    \mathbb{P}(M_k < 2) &= \mathbb{P}(X_1 \le 1)^k \le
    \mathbb{P}\big(\mathrm{Bin}(2, p) \le 1\big)^k = (1-p^2)^k \le e^{-p^2k}.
  \end{align*}
\end{proof}

\begin{proof}[Proof of Theorem~\ref*{thm:bounded_n}]
The proof follows the same overarching structure as the proof of
Theorem~\ref{result}. Several details and arguments, however, have to be
adapted to the new asymptotic setting described by
$\mathcal{M}^\mathrm{b}_k(B)$, as we can no longer rely on the assumption that
the product $n_kp_k$ is bounded away from zero.

We begin by arguing that it is sufficient to consider
parameters $(n_k, p_k)$ in
\begin{equation*}
  \wt{\mathcal{M}}^\mathrm{b}_k(\MM)
  :=
  \Big\{(n, p)\in\mathcal{M}^\mathrm{b}_k(\MM)\,\Big|\,p\le \frac 1{(2\MM)^6}\Big\},
\end{equation*}
which guarantees that $p_k$ is sufficiently small.  The general case for an
arbitrary sequence $(n_k,p_k)_k$ in $\mathcal{M}_k^\mathrm{b}(B)$ is then
a simple conclusion of Theorem~\ref{result}: let $K\subset \mathbb{N}$ be the
set of indices $k$ where $p_k > 1/(2\MM)^6$. Then, $(n_k, p_k)_{k\in K}$ has
elements in $\mathcal{M}_k(\lambda)$ for $\lambda = (2\MM)^6$ and $(n_k,
p_k)_{k\in\mathbb{N}\setminus K}$ has elements in
$\wt{\mathcal{M}}^\mathrm{b}_k(\MM)$. Thus, posterior consistency holds for both
partial sequences, and hence also for the sequence $(n_k, p_k)_{k\in\mathbb{N}}$
as a whole.

To show posterior consistency for $(n_k,p_k)$ in
$\wt{\mathcal{M}}^\mathrm{b}_k(B)$, we recall the upper bound
\eqref{posterior-bound} for $A_k = \mathbb{N}\setminus\{n_k\}$,
\begin{equation}\label{posterior-bound-nconst}
  \Pi\big(n\neq n_k\,|\,\data\big) 
  \le c_1 \frac{kn_k}{S_k}\,
    \sum_{n\neq n_k, n\ge M_k} \exp\left(k\int_{n_k}^{n} f'(m)\,\mathrm{d}m\right)
      \,\frac{\Pi_n(n)}{\Pi_n(n_k)},
\end{equation}
which holds for $S_k \ge 2$ with $c_1 = 2\,(1 + \lceil a \rceil + b)$. The
derivative $f'(m)$ is given by equation
\eqref{fprime} and reads
\begin{equation}\label{fprime-nconst}
	f'(m)
	=
  \sum_{j=1}^{M_k}\frac{ T_j-U_j}{j} -\sum_{j=M_k+1}^{ S_k+a}\frac{U_j}{j},
\end{equation}
where we again change the notation and write $a \in\mathbb{N}_0$ to refer to
$\lfloor a \rfloor$ from now on.
The random variables $T_j$ and $U_j$ are defined in equation
\eqref{tjuj-def} in the proof of Theorem~\ref{result}, and we will
again make use of the quantities $t_j := \mathbb{E}_{k} T_j$ and $u_j := (k n_k
p_k + a)_j / (km+a+b-1)_j$ in order to center $T_j$ and $U_j$.

We now consider a sequence of events $\mathcal{X}_k$ that is the intersection of
the sets
\begin{align*}
  \mathcal{R}_k &:= \big\{ M_k \ge 2 \big\},\\[0.1cm]
  \mathcal{T}_k &:= \bigcap_{j=1}^{M_k} \Big\{ (m)_j\big|T_j - t_j\big| \le
    \sqrt{(c_2p_kj)^j\,l_k\log(k)/k}\Big\}, \\[0.1cm]
  \mathcal{S}_k &:= \Big\{ \big|S_k - kn_kp_k\big| \le \sqrt{\MM l_k k}\Big\}
\end{align*}
for a fixed diverging sequence $1 \le l_k = o\big(\log(k)\big)$ and
the constant $c_2 = 2\MM(\MM+2)$. From $kn_kp_k \ge kp_k \ge
\sqrt{k}\log(k)/\MM$ and $kn_kp_k \le k/64\MM^6 \le k/64\MM$ for $k$ large
enough, one can derive that $\sqrt{k}/2\MM \le S_k \le k/32\MM$ holds on
$\mathcal{S}_k$ (use $\sqrt{\MM l_k k} \le \sqrt{k}\log(k)/2\MM \le k/64\MM$ for
large $k$).
On the event $\mathcal{R}_k$ we additionally find
\begin{equation}\label{eq:ska-bounded_n}
  kM_k \ge k \ge 16\,(S_k + a)
\end{equation}
for $k$ large enough such that $a \le S_k$.

With analog arguments as before we can show that equation \eqref{restriction}
holds in this setting, too. The only novel step is to prove
$\mathbb{P}_{k}(\mathcal{R}_k^c)\to 0$ as $k\to\infty$ for the sets
$\mathcal{R}_k$. This is taken care of by Lemma~\ref{helper-bounded_n}, where we
use that $kp^2_k\to\infty$ for sequences $(n_k, p_k)\in \wt
{\mathcal{M}}_k^{\mathrm{b}}(B)$.

We next bound $f'(m)$ from below ($m < n_k$) and above ($m > n_k$) for $\data$
restricted to $\mathcal{X}_k$. Our approach closely follows the proof of
Theorem~\ref{result}. 

\paragraph{Auxiliary lower bound}
Let $M_k\le m < n_k \le \MM$. In order to lower bound $f'(m)$, we consider the
inequality
\begin{equation}
  f'(m) \ge \sum_{j=1}^2 \frac{T_j-U_j}{j} - \sum_{j=3}^{S_k+a} \frac{U_j}{j},
\end{equation}
which follows from $T_j \ge 0$ for all $j$. For $j = 1$, a short
calculation shows
\begin{equation*}
  T_1 - U_1
  =
  \frac{S_k}{km} - \frac{S_k + a}{km+a+b-1} \ge -\frac{2(a+1)}{k}
  \ge
  -2\,(a+1)\MM \,\frac{p_k}{\sqrt{k}},
\end{equation*}
where we inserted $1/\sqrt{k} \le \MM p_k$ for sequences in $\wt
{\mathcal{M}}_k^{\mathrm{b}}(B)$. In case of $j > 1$, we first note that
\begin{equation*}
  \frac{\big|T_2 - t_2\big|}{2} \le c_2\,\frac{p_k \sqrt{l_k}}{\sqrt{k}}
\end{equation*}
on the event $\mathcal{T}_k$. Next, we apply a modification of
Lemma~\ref{Uj-concentration},
where we bound the telescoping sum in \eqref{eq:Uj-concentration-inequality}
more carefully. Let $t = \sqrt{\MM l_kk}$. For $2 \le j \le S_k + a$, we derive 
\begin{equation}\label{uj-concentration-alternative}
  \big|U_j - u_j\big|
  \le j\,
  \underbrace{\left(\frac{t}{km - j}\right)}_{\le 8\sqrt{\MM l_k}/7m\sqrt{k}}
  \underbrace{\left(\frac{kn_kp_k + t + a}{km - j}\right)}_{\le 16\MM p_k/7m}
  {\underbrace{\left( \frac{kn_kp_k + t + a + 1 + j}{km - j} \right)}_{\le 4/7}}^{j-2}
\end{equation}
on $\mathcal{X}_k$, where we used that $\max(kn_kp_k, t, j) \le k/8 \le km/8$
(guaranteed by \eqref{eq:ska-bounded_n}) and $(kn_kp_k + t + a)/k \le 2\MM p_k$ for
$k$ large enough. The quantities $u_j$, which are used as surrogate
for the expectations $\mathbb{E}_k[U_j]$, are defined on page \pageref{utilde-def}
in the proof of Theorem~\ref{result}. Bounding $m \ge 1$, it follows that
\begin{equation*}
  \sum_{j=2}^{S_k + a} \frac{\big|U_j - u_j\big|}{j} 
  \le
  \frac{128\,\MM^2}{49}\frac{p_k\sqrt{l_k}}{\sqrt{k}}\sum_{j=0}^{\infty}
  \left(\frac{4}{7}\right)^{j}
  \!=: c_3 \,\frac{p_k \sqrt{l_k}}{\sqrt{k}}
\end{equation*}
Next, assuming that $j \le S_k + a \le km/4$, which is again guaranteed by
\eqref{eq:ska-bounded_n}, we bound
\begin{align*}
  \sum_{j=3}^{S_k + a} \frac{|u_j|}{j}
  &=
  \sum_{j=3}^{S_k + a} \frac{1}{j}\frac{|(kn_kp_k + a)_j|}{(km + a + b -1)_j} \\
  &\le
  \sum_{j=3}^{S_k + a} \frac{(kn_kp_k + a)_3}{(km - 1)_3}
    \frac{|(kn_kp_k + a - 3)_{j-3}|}{(km - 4)_{j-3}} \\
  &\le 
  \left(\frac{kn_kp_k + a}{km - 3}\right)^{\!3}\,\sum_{l=0}^{S_k + a - 3}
  \left(\frac{kn_kp_k + km/4 + a + 1}{3\,km/4}\right)^l \\
  &\le 
  \left(\frac{2n_kp_k}{m}\right)^3\sum_{l=0}^\infty 2^{-l}
  \le 16 \MM^3p_k^3
\end{align*}
for $k$ large enough such that $kn_kp_k + a \ge 3$ and $3\,k/4 \ge 6\,(kn_kp_k
+ a + 1)$ (first two inequalities) as well as $k-3 \ge 2k/3$ and $3a/2k \le
n_kp_k/2$ (second to last inequality). 
We now apply the first part of Lemma~\ref{Utilde-Tj-bound} (adopted to
parameters in $\mathcal{M}^\mathrm{b}_k(B)$) to derive
\begin{align*}
  \frac{t_2 - u_2}{2}
  &\ge 
  \frac{(n_kp_k)^2}{2n_k} \frac{n_k - m}{m^3} - \frac{c_4}{k} \\
  &\ge
  \frac{n_k - m}{2\MM^3}\,p_k^2 - c_4\MM\,\frac{p_k}{\sqrt{k}},
\end{align*}
where $c_4 \ge 2\,\big(3\,\MM(a+b+1)+2a+4\big)^2$. Combining the bounds above,
we find
\begin{align*}
  f'(m) 
  &\ge (T_1 - U_1) + \frac{T_2 - t_2}{2}
    + \sum_{j=2}^{S_k+a} \frac{u_j - U_j}{j} +
    \frac{t_2 - u_2}{2} - \sum_{j=3}^{S_k+a}
    \frac{u_j}{j} \\
  &\ge 
    \left(\frac{n_k - m}{2\MM^3} - 16\MM^3\,p_k\right) p_k^2 
    - \big(2(a+1)\MM + c_2 + c_3 + c_4\MM\big)\,\frac{p_k\sqrt{l_k}}{\sqrt{k}} \\
  &\ge 
  \left(\frac{n_k - m}{2} - \frac{1}{8}\right) \frac{p_k^2}{\MM^3}
  - C\,\frac{p_k\sqrt{l_k}}{\sqrt{k}}
\end{align*}
for $C = 2(a+1)\MM + c_2 + c_3 + c_4\MM$, where we made use of the restriction
$p_k\le1/(2\MM)^6$ that we demanded for the set
$\wt{\mathcal{M}}^\mathrm{b}_k(\MM)$.
Integration over $m$ for some $n\in\mathbb{N}$ with $n < n_k$ yields
\begin{equation}\label{eq:lower-bound-bounded_n}
  -k \int_n^{n_k} f'(m)\,\mathrm{d}m \le -\frac{1}{8\MM^3}\,kp_k^2
  + \MM\,C \,p_k\sqrt{l_kk}.
\end{equation}
Our asymptotic setting implies $kp_k \ge \log(k)\sqrt{k}/\MM$ for sequences in
$\wt {\mathcal{M}}_k^{\mathrm{b}}(B)$. Since $l_k$ was defined such that
$l_k/\log(k) \to 0$ as $k\to\infty$, the lower bound in
\eqref{eq:lower-bound-bounded_n} thus tends to $-\infty$ as $k\to\infty,$ and
the convergence is uniform over all sequences in $\wt
{\mathcal{M}}_k^{\mathrm{b}}(B)$.

\paragraph{Auxiliary upper bound}
We now assume $n_k < m$ and bound $f'(m)$ from above. Since each $U_j$
is non-negative, it is sufficient to upper bound
\begin{equation*}
  f'(m) \le \sum_{j=1}^{M_k} \frac{T_j-U_j}{j}.
\end{equation*}
Due to inequality \eqref{eq:ska-bounded_n}, the first term in this sum obeys
\begin{equation*}
  T_1 - U_1 \le \frac{S_k (a+b)}{km (km-1)} \le \frac{a+b}{m^2k}
  \le
  (a+b)\MM\,\frac{p_k}{m^2\sqrt{k}}
\end{equation*}
for $k$ sufficiently large, once again using $1/\sqrt{k}\le\MM p_k$.
Recalling the definition of the event $\mathcal{T}_k$ and
applying the result $(m)_j \ge m^j/e^2$ derived in Lemma~\ref{lem.falling_factorial},
we furthermore find
\begin{equation*}
  \sum_{j=2}^{M_k} \frac{\big|T_j - t_j\big|}{j}
  \le
  \sum_{j=2}^{M_k} \frac{\sqrt{(c_2p_kj)^j l_k}}{\sqrt{k}}\,\frac{e^2}{m^j}
  \le
  c_2e^2\MM \sqrt{c_2\MM}^\MM\,\frac{p_k\sqrt{l_k}}{m^2\sqrt{k}}
  =:
  c'_2\,\frac{p_k\sqrt{l_k}}{m^2\sqrt{k}}.
\end{equation*}
We next employ inequality \eqref{uj-concentration-alternative} to establish the
upper bound
\begin{align*}
  \sum_{j=2}^{M_k} \frac{\big|U_j - u_j\big|}{j}
  &\le
  \frac{128\,\MM^2}{49}\frac{p_k\sqrt{l_k}}{m^2\sqrt{k}}~\sum_{j=0}^\infty
  \left(\frac{4}{7}\right)^j
  =: c'_3\,\frac{p_k\sqrt{l_k}}{m^2\sqrt{k}},
\end{align*}
and cite (a slightly adapted version of) the second result of
Lemma~\ref{Utilde-Tj-bound} to obtain
\begin{equation*}
  \sum_{j=2}^{M_k} \frac{t_j - u_j}{j}
  \le
  - C_1' \frac{m - n_k}{m^3} p_k^2 + c_4' \frac{p_k\log(k)}{m^2k}
\end{equation*}
with $C_1' = 1/(1+a+b)^2$ and $c_4' = 18 (1+B)^3(1+a+b)^2$.
Combining the inequalities above, we
find
\begin{align*}
  f'(m)
  &\le (T_1 - U_1) +
    \sum_{j=2}^{M_k} \frac{T_j - t_j}{j} +
    \sum_{j=2}^{M_k} \frac{u_j - U_j}{j} +
    \sum_{j=2}^{M_k} \frac{t_j - u_j}{j} \\
  &\le
    - C_1'\,\frac{m - n_k}{m^3} p_k^2
    + C'\,\frac{p_k\sqrt{l_k}}{m^2\,\sqrt{k}}
\end{align*}
for $C' = (a + b)\MM + c_2' + c_3' + c_4'$. Integration over $m$ yields
\begin{equation}\label{eq:upper-bound-bounded_n}
  k\int_{n_k}^n f'(m)\,\mathrm{d}m
  \le
  -\frac{C_1'}{8\MM^3}\,kp_k^2
    + C'\,p_k\sqrt{l_kk}
\end{equation}
for any $n\in\mathbb{N}$ with $n > n_k$.
In this bound, we exploited that
\begin{align*}
  \int_{n_k}^n \frac{m-n_k}{m^3}\,\mathrm{d}m
  &\ge
  \int_{\MM}^{\MM+1} \frac{m-\MM}{m^3}\,\mathrm{d}m
  =
  \frac{1}{2\MM(\MM+1)^2}
  \ge
  \frac{1}{8\MM^3}
\end{align*}
and $\int_{n_k}^n m^{-2}\,\mathrm{d}m \leq \int_1^\infty m^{-2}\,\mathrm{d}m=1.$

Similarly to the lower bound \eqref{eq:lower-bound-bounded_n}, the upper bound
\eqref{eq:upper-bound-bounded_n} diverges to $-\infty$ as
$k\to\infty$, uniformly over $\wt{\mathcal{M}}^\mathrm{b}_k(\MM)$.
Inserting these bounds in inequality \eqref{posterior-bound-nconst} thus yields
$\Pi\big(n\neq n_k\,\big|\,\data\big) \to 0$ as $k\to\infty$ on the event
$\mathcal{X}_k$, since the prefactor $c_1kn_k/S_k$ is bounded by
$c_1\MM\exp\!\big(\log(k)\big)$. Posterior consistency follows from
$\mathbb{P}_{k}(\mathcal{X}_k^c) \to 0$ as $k \to \infty$ and
equation \eqref{restriction}.
\end{proof}

\section{Proof of Theorem 3}\label{app:thm4}

For probability measures $P$ and $Q$ on the same measurable space, the
$\chi^2$-divergence is defined as $\chi^2(P,Q)=\int (dP/dQ)^2 dQ-1$ if $P$ is
dominated by $Q,$ and $\chi^2(P,Q)=\infty$ otherwise. If $P,Q$ are discrete
distributions with probability mass functions $p$ and $q$, then
$\chi^2(P,Q)=\sum_k p(k)^2/q(k) -1$.

The following result quantifies the $\chi^2$-divergence between two distinct
binomial distributions that are chosen to match each other as closely as possible.

\begin{lem}\label{lem:chi2bound}
  Let $n\in\mathbb{N}$, $p\in(0, 1),$ $q = 1-p,$ and $p_n = np/(n+1)$.  If
  $\chi^2(n, p)$ denotes the $\chi^2$-divergence between the distributions
  $\mathrm{Bin}(n, p)$ and $\mathrm{Bin}(n+1, p_n)$,
  then
  \begin{equation*}
    \chi^2(n, p) + 1 = \left(1 - \frac{p}{nq+1}\right) \left(1 + \frac{p}{nq(nq+1)}\right) \left(1 + \frac{p}{n(nq+1)}\right)^{n-1}.
  \end{equation*}
  Furthermore, if $p \le 1-\delta$ for some $\delta > 0$, then
  \begin{equation*}
    \chi^2(n, p) \le c\, \frac{p^2}{n^2}
  \end{equation*}
  for a positive constant $c$ only depending on $\delta$.
\end{lem}

\begin{proof}
Denote by $f_1$ the probability mass function of the $\mathrm{Bin}(n, p)$
distribution. From the moment generating function of the binomial distribution,
we deduce the moment identities $\sum_{r=0}^n t^r f_1(r) = (q + pt)^n$ and
$\sum_{r=0}^n r\,t^r f_1(r) = t\,np\,(q + pt)^{n-1}$ for $t > 0$, where the
second equation follows from the first by differentiation with respect to $t$.

Using $(n+1)(1-p_n)/(nq)=1+1/(nq)=:t^*$ and applying the two moment identities
for $t=t^*,$ we find
  \begin{align*}
    \chi^2(n, p) + 1
    &= \sum_{r=0}^n
    \frac{\binom{n}{r}p^r(1-p)^{n-r}}{\binom{n+1}{r}p_n^r (1-p_n)^{n+1-r}}~f_1(r) \\
    &= \frac{1}{n+1}\frac{1}{1-p_n}\left(\frac{1-p}{1-p_n}\right)^n
       \sum_{r=0}^n
     (n+1-r) \left(\frac{n+1}{n}\frac{1-p_n}{1-p}\right)^{\!r} \!f_1(r) \\
    &= \frac{1}{1-p_n}\left(\frac{1-p}{1-p_n}\right)^n
    \left(1 + \frac{p}{nq}\right)^{n-1}
    \left(\left(1 + \frac{p}{nq}\right) - \frac{np}{n+1}\left(1
    + \frac{1}{nq}\right)\right).
  \end{align*}
  We also have $(1-p)/(1-p_n)=1-p/(nq+1)$ and therefore
  \begin{equation*}
    \frac{1}{1-p_n} \left(\left(1 + \frac{p}{nq}\right) - \frac{np}{n+1}\left(1
    + \frac{1}{nq}\right)\right) = 1 + \frac{p}{nq(nq+1)},
  \end{equation*}
  proving
  \begin{equation*}
    \chi^2(n, p) + 1
    = \left(1 - \frac{p}{nq+1}\right)^n \left(1 + \frac{p}{nq}\right)^{n-1}
    \left(1 + \frac{p}{nq(nq+1)}\right).
  \end{equation*}
 Since 
  \begin{equation*}
    \left(1 - \frac{p}{nq+1}\right) \left(1 + \frac{p}{nq}\right)
    = 1 + \frac{p}{n(nq+1)},
  \end{equation*}
  we finally obtain 
  \begin{equation*}
    \chi^2(n, p) + 1 = \left(1 - \frac{p}{nq+1}\right) \left(1
    + \frac{p}{nq(nq+1)}\right) \left(1 + \frac{p}{n(nq+1)}\right)^{n-1}.
  \end{equation*}
  This shows the first part of the claim. 
  
  To prove that $\chi^2(n,p)\leq cp^2/n,$ recall that $p \le 1-\delta < 1$. For
  $n = 1$,
  \begin{equation*}
    \chi^2(n, p)
    = \left(1 - \frac{p}{q+1}\right)\left(1 + \frac{p}{q(q+1)}\right) - 1
    = \frac{p^2}{(q+1)^2} \le \frac{p^2}{n^2}
  \end{equation*}
  If $n
  > 1$, we can upper bound the last factor in the formula for $\chi^2(n, p) + 1$ by
  \begin{align*}
    \left(1 + \frac{p}{n(nq+1)}\right)^{n-1}
    &= 1 + \frac{(n-1)\,p}{n(nq + 1)} + \sum_{r=2}^{n-1} \binom{n-1}{r}
       \left(\frac{p}{n(nq+1)}\right)^r \\
    &\le 1 + \frac{(n-1)\,p}{n(nq + 1)} + \frac{p^2}{n^2}\,\sum_{r=2}^{n-1}
    \frac{q^{-r}}{r!}\\
    &\le 1 + \frac{(n-1)\,p}{n(nq + 1)} + e^{1/\delta}\,\frac{p^2}{n^2},
  \end{align*}
  where we used that $e^{1/\delta} \ge e^{1/q}$. Finally, we note
  \begin{equation*}
    -\frac{p}{nq+1} + \frac{p}{nq(nq+1)} + \frac{(n-1)\,p}{n(nq+1)} = 
    \frac{p^2}{nq(nq+1)} \le c_2\,\frac{p^2}{n^2}
  \end{equation*}
  for some $c_2 > 0$ depending on $\delta.$ Because only terms of order
  $p^2/n^2$ or higher occur when expanding the product in the upper bound
  \begin{equation*}
    \chi^2(n, p)
    \le \left(1 - \frac{p}{nq+1}\right)
    \left(1 + \frac{p}{nq(nq+1)}\right)
    \left(1 + \frac{(n-1)\,p}{n(nq+1)} + c_1 \frac{p^2}{n^2}\right) - 1,
  \end{equation*}
  we conclude that $\chi^2(n, p) \le c\,p^2/n^2$ for a finite constant $c=c(\delta).$
\end{proof}

\begin{proof}[Proof of Theorem \ref*{thm:lower_bound}]
Notice that $\P_{n,p}( \wh n \neq n)=\E_{n,p}[\ell(\wh n,n)]$ for the distance
$\ell(m,n):=\mathbf{1}(m \neq n)$ defined on $\mathbb{N}\times \mathbb{N}.$
Applying Theorem 2.1 combined with part (iii) of Theorem 2.2 in \cite{MR2724359}
shows that if $\chi^2\big(\mathbb{P}_{n_k, p_k},\smash{\mathbb{P}_{n_k+1,
p_k'}}\big)\leq \alpha < \infty,$ then there exists a positive constant
$c_\alpha$ such that for all estimators $\hat{n}_k$ and all $k$
\begin{equation}\label{eq:lbtoshow}
  \max_{(n,p)\in\mathcal{M}^*_k} \mathbb{P}_{n,p}(\hat{n}_k\neq n) \ge
  c_\alpha.
\end{equation}
To prove Theorem~\ref{thm:lower_bound}, it thus remains to bound the respective
$\chi^2$-divergence by a finite constant $\alpha =\alpha(\delta, \eta).$

Since we observe $k$ i.i.d.\ realizations of a binomial distribution in
\eqref{eq:lbtoshow}, we have $\mathbb{P}_{n, p}
= \bigotimes_{j=1}^k\mathrm{Bin}(n, p)$. Applying the equality for the
$\chi^2$-divergence between product measures stated on page 86 in
\cite{MR2724359}, we find
\begin{equation*}
  \chi^2\big(\mathbb{P}_{n_k, p_k}, \mathbb{P}_{n_k+1, p_k'}\big)
  = \Big(1 + \chi^2\big(\mathrm{Bin}(n_k, p_k), \mathrm{Bin}(n_k+1,
    p_k')\big)\Big)^k - 1.
\end{equation*}
To complete the proof, it is therefore enough to show that
\begin{equation*}
  \chi^2\big(\mathrm{Bin}(n_k, p_k), \mathrm{Bin}(n_k+1, p_k')\big)
    \le \frac{a}{k}
\end{equation*}
for a constant $a=a(\eta, \delta).$ This, however, follows directly
from the assumption $n_k/p_k \geq \eta\,\sqrt{k}$ and Lemma~\ref{lem:chi2bound},
since
\begin{equation*}
    \chi^2\big(\mathrm{Bin}(n_k, p_k), \mathrm{Bin}(n_k+1, p_k')\big)
    \le c \left(\frac{p_k}{n_k}\right)^2\leq \frac{c}{\eta^2 k}
\end{equation*}
for a constant $c>0$ only depending on $\delta$.
\end{proof}

\section{Proof of Theorem 4}\label{app:BvM}

\begin{lem}\label{lem.TV_comp}
Consider two discrete distributions of the form $P(X=i)=q_i\Delta_i/\sum_{j}q_j
\Delta_j$ and $Q(X=i)=q_i/\sum_j q_j$ for non-negative $\Delta_i,q_i.$ Then, 
\begin{align*}
  \TV(P,Q) \leq \max_i |1-\Delta_i|.
\end{align*}
\end{lem}

\begin{proof}
Using the triangle inequality gives 
\begin{align*}
    2\TV(P,Q)
    &\leq \sum_i \left| \frac{q_i\Delta_i}{\sum_j q_j \Delta_j}
    - \frac{q_i\Delta_i}{\sum_j q_j}\right|
    +\left|\frac{q_i\Delta_i}{\sum_j q_j}-\frac{q_i}{\sum_j q_j}\right| \\
    &\leq \sum_i \frac{q_i\Delta_i}{\sum_j q_j \Delta_j} \left| \frac{\sum_j q_j
    (1-\Delta_j)}{\sum_j q_j}\right| +\sum_i \frac{q_i}{\sum_j q_j} |\Delta_i-1|.
\end{align*}
Taking the maximum over $|1-\Delta_i|$ yields the result. 
\end{proof}

\begin{proof}[Proof of Theorem \ref*{thm.BvM}]
Let $\mathcal{X}_k= \mathcal{R}_k\cap \mathcal{T}_k \cap \mathcal{S}_k$ be as
defined in \eqref{eq.events_proof_MR}. Most of the subsequent arguments assume
$\data\in\mathcal{X}_k$ and are supposed to hold for sufficiently large $k$,
often without this being mentioned explicitly. It is also convenient to work
with a big $O$ notation, where for two positive sequences $(a_k)_k$ and
$(b_k)_k$, we write $a_k=O(b_k)$ if $a_k/b_k \leq C$ for a constant $C$ and all
sufficiently large $k$ such that the inequality holds uniformly over all
$(n_k,p_k)\in \mathcal{M}_k(\lambda)$ and $\data \in \mathcal{X}_k.$

By definition of $\mathcal{X}_k,$ we can use the inequalities $|S_k-kn_kp_k|\leq
\lambda \sqrt{k\log k}$ and $\big|k\sum_{i=1}^k(X_i)_2-k^2(n_k)_2p_k^2\big|\leq
\sqrt{8} c_2 \big(k\log(k)\big)^{3/2}.$ Since $n_k \leq \lambda
\sqrt{k}/\log^6(k),$
\begin{align}
    |\wh n-n_k|
    &=\left| \frac{n_k+O(n_k\sqrt{\log(k)/k})}{1+O(n_k\sqrt{\log^3(k)/k})} - n_k\right| \nonumber\\
    &= O\!\left(n_k^2 \sqrt{\frac{\log^3(k)}{k}}\right)
    \leq \frac 12 n_k^2 \frac{\log^{7/4}(k)}{\sqrt{k}}.
    \label{eq.nhatn_ub}
\end{align}

Separating the sequence $(n_k,p_k)$ in two subsequences, it is sufficient to
assume either $n_k^2\log^2(k)/\sqrt{k} <1$ for all $k$ or
$n_k^2\log^2(k)/\sqrt{k} \geq 1$ for all $k.$

Consider first the case that $n_k^2\log^2(k)/\sqrt{k} <1.$ Because $n$ is
a discrete parameter, Theorem \ref{result} implies that the posterior will
asymptotically put all mass on the true $n_k,$ uniformly over all sequences
$(n_k,p_k)\in \mathcal{M}_k(\lambda).$ This also means that the posterior
converges in total variation distance to the point measure at $n_k$ denoted by
$\delta_{n_k}.$ Because the total variation distance satisfies the triangle
inequality, it is enough to show that
\begin{equation*}
    \sup_{(n_k,p_k)\in \mathcal{M}_k(\lambda), \data \in \mathcal{X}_k}
    \TV\big(\delta_{n_k}, \mathcal{N}_\mathrm{d}(\wh n, 2n_k^2/kp_k^2)\big) \to 0,
\end{equation*}
since by \eqref{restriction} we know $\sup_{(n_k,p_k)\in \mathcal{M}_k(\lambda)}
\P_{n_k,p_k}(\mathcal{X}_k^c) \to 0$. On the event $\mathcal{X}_k,$ the bound
\eqref{eq.nhatn_ub} gives $\P(Z=n_k) \to 1$ for $Z \sim
\mathcal{N}_\mathrm{d}(\wh n, 2n_k^2/kp_k^2)$ and this yields the claim for the
case that $n_k^2\log^2(k)/\sqrt{k} <1.$ 

It therefore remains to study the non-trivial case with $k^{1/4}/\log(k) \leq
n_k \leq \lambda \sqrt{k}/\log^6(k).$ Let $B_k = \big\{n:|n-n_k|\leq
n_k^2\log^{7/4}(k)/\sqrt{k}\big\}.$ In particular, this implies that $B_k
\subset \{n\in [n_k/2,2n_k]\}$ for all sufficiently large $k$ and $|n-n_k|/n_k
\to 0$ as $k \to \infty.$

Lemma E.1 in \cite{2018arXiv180904140R} states that for probability measures
$P,Q$ on the same measurable space $(\Omega,\mathcal{A})$ and any $A \in
\mathcal{A}$ with $P(A),Q(A)>0,$ we have that
\begin{equation}\label{eq:tvcond}
  \TV(P,Q)\leq \TV(P(\cdot|A),Q(\cdot|A))+2P(A^c)+2Q(A^c),
\end{equation}
where $P(\cdot |A)=P(\cdot \cap A)/P(A)$ and $Q(\cdot |A)=Q(\cdot \cap A)/Q(A).$
We will now apply this inequality for $A=B_k,$ $P$ the posterior and $Q$ the
limit distribution. To show that the total variation distance converges to zero,
we need that the event $B_k^c$ has vanishing probability under the posterior and
the limit distribution.

Let $Z\sim \mathcal{N}_\mathrm{d}(\wh n, 2n_k^2/kp_k^2)$ and denote by $c_k$ the
normalizing constant such that $\P(Z=n)=c_k\exp(-kp_k^2(n-\wh n)^2/(4n_k^2))$
for all integers $n.$ By \eqref{eq.nhatn_ub}, we find $0\geq \log
\P(Z=n_k)=\log(c_k)+O\big(\log^3(k)\big)$ and thus $\log(c_k)\lesssim
\log^3(k).$ Observe that for $a>0,$
\begin{equation*}
    \sum_{q=0}^\infty e^{-aq}=1/(1-e^{-a})\leq \sqrt{e} \max\left(\frac{1}{a}, \frac{1}{\sqrt{e}-1}\right).
\end{equation*}
Here the equality follows by evaluating the geometric sum and the inequality can
be deduced by studying the cases $a\leq 1/2$ and $a>1/2$ separately, noting that
since $1+a\leq e^a$ for all $a,$ we have $e^{-a}+e^{-a}a\leq 1$ and thus
$e^{-a}+e^{-1/2}a\leq 1$ for $0<a\leq 1/2$ which is equivalent to
$1/(1-e^{-a})\leq \sqrt{e}/a.$ Applying this inequality together with
\eqref{eq.nhatn_ub} shows that
\begin{align*}
    \P\big(Z\in B_k^c\big)
    &=
    c_k\sum_{n \in B_k^c } \exp\bigg(-\frac{kp_k^2}{4n_k^2}(n-\wh n)^2\bigg)
    \\
    &\leq
    c_k \sum_{n \in B_k^c } \exp\bigg(-\frac{kp_k^2}{16n_k^2}(n-n_k)^2\bigg)
    \\
    &\leq 
    2c_k \sum_{q=0}^\infty \exp\bigg(-\frac{kp_k^2}{16n_k^2}\left(q+\frac{n_k^2
    \log^{7/4}(k)}{\sqrt{k}}\right)^2\bigg) \\
    &\leq 
    2c_k \sum_{q=0}^\infty \exp\bigg(-\frac{\log^{7/2}(k)}{\lambda^2}-
    \frac{\sqrt{k}p_k^2}{8}\log^{7/4}(k) q\bigg) \\
    &=O\bigg(\exp\Big(O(\log^3(k))-\frac{\log^{7/2}(k)}{\lambda^2}\Big)\bigg(1
    \vee \frac{n_k^2}{\sqrt{k} \log^{7/4}(k)} \bigg)\bigg) \\
    &=o(1).
\end{align*}

Thus, applying the total variation bound \eqref{eq:tvcond}, $\sup_{(n_k,p_k) \in
\mathcal{M}_k(\lambda)} \P(Z\in B_k^c) \to 0,$ and $\sup_{(n_k,p_k)\in
\mathcal{M}_k(\lambda)}\E_{n_k,p_k}[\Pi(B_k^c|\data)]\to 0$ (via
Theorem~\ref{result}), it remains to prove that 
\begin{align*}
    \sup_{(n_k,p_k)\in \mathcal{M}_k(\lambda)}
    \E_{n_k,p_k}\big[\TV(\Pi_1, Q_1)\big]
    \to 0 \qquad \text{as} \ k \to \infty
\end{align*}
with $\Pi_1(\cdot)=\Pi(\cdot \cap B_k |\data)/\Pi(B_k|\data)$ and
$Q_1(\cdot)=\P(Z\in \cdot \cap B_k)/\P(B_k).$ Because of $\sup_{(n_k,p_k)\in
\mathcal{M}_k(\lambda)} \P_{n_k,p_k}(\mathcal{X}_k^c) \to 0,$ the proof is
completed once we can show that
\begin{align}\label{eq.BvM_TV_to_show}
    \sup_{(n_k,p_k)\in \mathcal{M}_k(\lambda), \data \in \mathcal{X}_k}
    \TV(\Pi_1, Q_1)\to 0 \qquad \text{as} \ k \to \infty.
\end{align}

In the next step, we derive a sharper bound for the likelihood ratio by proving
that there exists an expression $V'$ that is independent of $n\in\mathbb{N}$,
such that
\begin{align}
    \log\left(\frac{L_{a,b}(n)}{L_{a,b}(n_k)}\right)
    = - \frac{kp_k^2}{4n_k^2} \big(n-\wh n\big)^2+V'+o(1)
    \label{eq.BvM_LR_to_show}
\end{align}
if $(n,\data)\in(B_k, \mathcal{X}_k)$.
As before, $L_{a,b}(n)$ denotes the beta-binomial likelihood. Recall that
$L_{a,b}(n)=R(a,b,n)$ for $n\geq M_k,$ with $R(a,b,n)$ as defined in equation
\eqref{E:helperR}. By assumption, the parameter $a$ is a non-negative integer.
Arguing as for \eqref{eq.R_ratio} and \eqref{fprime}, we find that 
\begin{align*}
    \frac{L_{a,b}(n)}{L_{a,b}(n_k)}
    &= \frac{R(a,b,n)}{R(a,b,n_k)}
    = \exp\left(k \int_{n_k}^n f'(m) \, \mathrm{d}m\right)
\end{align*}
with
\begin{equation*}
	f'(m)
	= \sum_{j=1}^{M_k}\frac{ T_j-U_j}{j} -\sum_{j=M_k+1}^{ S_k+a}\frac{U_j}{j},
\end{equation*}
$T_j = \tfrac 1k \sum_{i=1}^k (X_i)_j/(m)_j,$ and $U_j=( S_k
+a)_j/(km+a+b-1)_j.$ Since $n$ and $n_k$ are in $B_k,$ it is enough to determine
the leading order of $f'(m)$ uniformly over $m \in B_k.$

First observe that for $m\in B_k,$ we have $m\geq n_k/2$ and $(m)_j \geq
(n_k/2-j)_+^j.$ For $j\leq 2\log(k),$ we can conclude that $(m)_j\geq (n_k/4)^j$
due to $2\log(k)\leq n_k/4$. Recall the definition $t_j := \E_k[T_j] = (n_k)_j
p_k^j / (m)_j$.
Using $\data \in \mathcal{R}_k\cap \mathcal{T}_k$, we can apply the closed-form
formula for the geometric sum to obtain
\begin{align}\label{eq.BvM1}
\begin{split}
    \Big|\sum_{j=3}^{M_k} \frac{T_j- t_j}{j}\Big|
    &\leq \frac{l_k\log(k)}{\sqrt{k}}
    \sum_{3\leq j\leq 2\log(k)} \frac{\sqrt{(c_2j)^j}}{(m)_j} \\
    &\leq \frac{l_k\log(k)}{\sqrt{k}}
    \sum_{3\leq j\leq 2\log(k)}
    \left(\frac{4\sqrt{2c_2 \log(k)}}{n_k}\right)^j
    \\
    &= O\bigg(\frac{\log^3(k)}{n_k^3\sqrt{k}}
    \bigg),
\end{split}
\end{align}
for all sufficiently large $n_k$, where $l_k = o\left(\sqrt{\log k}\right)$ as
in equation \eqref{eq.events_proof_MR}. 

Let $u_j$ be as defined in \eqref{eq:smallujdef}. Applying Lemma
\ref{Uj-concentration} and noting $\data \in \mathcal{S}_k,$ we conclude
$n_k\geq 8e^2(3\lambda+a+1)$ for sufficiently large $k$ and thus
\begin{align}\label{eq.BvM2}
    \sum_{j=3}^\infty \frac{|U_j- u_j|}{j}
    \leq \sqrt{\frac{\lambda \log k}{k}} 
    \sum_{j=3}^\infty \left(\frac{2e^2(3\lambda+a+1)}{m}\right)^j
    = O\left(\frac 1{n_k^3}\sqrt{\frac{\log k}{k}}\right),
\end{align}
since $m\geq n_k/2$.

Using that for all sufficiently large $k,$ $S_k \leq 2k\lambda$ on the event
$\mathcal{S}_k,$ it follows that $|T_1-U_1|=O(1/kn_k)$ uniformly over
$\mathcal{M}_k(\lambda).$ Similarly, we also find that
\begin{align}\label{eq.BvM25}
    \left|U_2-\frac{S_k^2}{(km)^2}\right| = O\Big(\frac{1}{kn_k}\Big).
\end{align}
Introduce the sequence $b_j=(n_kp_k/m)^j.$ We find that $|b_j- u_j|=O(1/kn_k^2)$
for $2 \leq j\leq 5.$ We also have that $t_{j+1} = t_j(n_k-j)/(m-j)p_k$ and
$b_{j+1}=b_jn_kp_k/m.$ Therefore, 
\begin{align*}
    \big|t_{j+1}-b_{j+1}\big| 
    &\leq t_j p_k \Big| \frac{n_k-j}{m-j}-\frac{n_k}{m}\Big|
    +\big|t_j-b_j \big| \frac{n_kp_k}{m} \\
    &\leq t_j j p_k \left|\frac{m-n_k}{m(m-j)}\right|
    +\big|t_j-b_j \big| \frac{\lambda}{m}.
\end{align*}
This shows $|t_j-b_j|=O\left(|m-n_k|/n_k^{j+2}\right)$ for $j\leq 5$ and therefore
\begin{align}\label{eq.BvM4}
    \big|t_j-u_j \big|
    = O\left(\frac{|m-n_k|}{n_k^5}+ \frac{1}{kn_k^2}\right)\qquad \text{for}~j=3,4.
\end{align}
Using that $t_j=(n_k)_jp_k^j/(m)_j,$ it follows that 
\begin{align}\label{eq.BvM5}
    \sum_{6\leq j \leq 2\log(k)} \frac{t_j}{j}
    \leq 
    \sum_{6\leq j \leq 2\log(k)}
    \frac{\lambda^j}{\big(m-2\log(k)\big)^j}
    =O\left(\frac{1}{n_k^6}\right),
\end{align}
and similarly, we can conclude that
\begin{align}\label{eq.BvM6}
    \sum_{6\leq j \leq 2k\lambda+a} \frac{u_j}{j}=O\left(\frac{1}{n_k^6}\right).
\end{align}
Combining equations \eqref{eq.BvM1} to \eqref{eq.BvM6} shows that uniformly over
$\data \in \mathcal{X}_k$ and $m \in B_k,$
\begin{align*}
    f'(m) = \frac{\sum_{i=1}^k (X_i)_2}{2km(m-1)}- \frac{S_k^2}{2(km)^2}
    +O\bigg(\frac{\log^3(k)}{n_k^3\sqrt{k}}
    +\frac{|n_k-m|}{n_k^5}+ \frac{1}{n_k^6}+\frac{1}{kn_k}
    \bigg). 
\end{align*}
Recall that for any $n\in B_k,$ $|n-n_k|\leq n_k^2\log^{7/4}(k)/\sqrt{k}.$
Moreover, $k^{1/4}/\log(k) \leq n_k\leq \lambda \sqrt{k}/\log^6(k)$. It follows
that uniformly over $\data \in \mathcal{X}_k$ and $n \in B_k,$
\begin{align*}
    &k \int_{n_k}^n f'(m) \, \mathrm{d}m \\
    &\qquad=
    \frac 12 \sum_{i=1}^k (X_i)_2
    \bigg(\log\Big(1-\frac 1n\Big)-\log\Big(1-\frac 1{n_k}\Big)\bigg)
    +\frac{S_k^2}{2k}\Big(\frac{1}{n}-\frac{1}{n_k}\Big)+o(1) \\
    &\qquad= g(n)+V+o(1), \quad \text{where} \ \  
    g(n):=\frac 12 \sum_{i=1}^k (X_i)_2
    \log\Big(1-\frac 1n\Big)
    +\frac{S_k^2}{2kn}
\end{align*}
and $V$ is an expression that does not depend on $n.$ Noting that the derivative
of $\log(1-1/x)$ is $1/(x^2-x)$, it can be checked that $g'(\wh n)=0$, where
$\wh n$ is the estimator defined in \eqref{eq.whn_def}. A second order Taylor
expansion shows that there exists a $\xi$ between $n$ and $\wh n$ such that
\begin{align}\label{eq.g_T_exp}
    g(n)=g(\wh n)+ \frac {g''(\xi)}2 \big(n-\wh n\big)^2.
\end{align}
By \eqref{eq.nhatn_ub}, it follows that $\wh n \in B_k$ and hence $\xi\in B_k.$
Observe that $S_k^2/(k \wh n^3)=\sum_{i=1}^k (X_i)_2/\wh n^2(\wh n-1)$ and let
$d_k \leq 2n_k^2\log^{7/4}(k)/\sqrt{k}$ denote the diameter of $B_k$. Combined
with the definition of the event $\mathcal{T}_k$ in \eqref{eq.events_proof_MR},
$(m)_2t_2=n_k(n_k-1)p_k^2,$ and $1/\lambda \leq n_kp_k \leq \lambda$, we find
that uniformly over $n \in B_k,$
\begin{align*}
    g''(n)&= \frac{\sum_{i=1}^k (X_i)_2 (1-2n)}{2 (n^2-n)^2}+ \frac{S_k^2}{kn^3} \\
    &= \frac{\sum_{i=1}^k (X_i)_2}{2n^2}
    \left[\frac{1-2n}{(n-1)^2}+\frac{2\wh n}{n(\wh n-1)}\right]\\
    &= \frac{\sum_{i=1}^k (X_i)_2\big[(2n-1-\wh n)(n-\wh n)-\wh n^2+\wh
       n\big]}{2n^3(n-1)^2(\wh n-1)}\\
    &=-\frac{\sum_{i=1}^k (X_i)_2\wh n^2}{2n^3(n-1)^2(\wh n-1)}
    +O\left(\frac{k}{n_k^5}\big(d_k+1\big)\right) \\
    &=-\frac{\sum_{i=1}^k (X_i)_2}{2n_k^4}
    +O\left(\frac{k}{n_k^5}\big(d_k+1\big)\right) \\
    &= - \frac{k n_k(n_k-1)p_k^2}{2n_k^4}
    +O\left(\frac{k}{n_k^5}\big(d_k+1\big)+\frac{\sqrt{k}\log k}{n_k^4 }\right) \\
    &= -\frac{kp_k^2}{2n_k^2}
       +O\left(\frac{k}{n_k^5}\big(d_k+1\big)+\frac{\sqrt{k}\log k}{n_k^4 }\right).
\end{align*}
Recall that $k^{1/4}/\log(k) \leq n_k \leq \lambda \sqrt{k}/\log^6(k).$
Inserting the expression in \eqref{eq.g_T_exp} and using that $|n-\wh n|\leq d_k
\leq 2 n_k^2 \log^{7/4}(k)/\sqrt{k}$ on $B_k$ finally yields
\begin{align*}
    g(n)=g(\wh n)-\frac{kp_k^2}{4n_k^2} \big(n-\wh n\big)^2+o(1).
\end{align*}
Therefore, we can also write 
\begin{align*}
    k \int_{n_k}^n f'(m) \, \mathrm{d}m
    = -\frac{kp_k^2}{4n_k^2} \big(n-\wh n\big)^2+V'+o(1),
\end{align*}
where $V'$ denotes an expression that is independent of $n.$ This proves
\eqref{eq.BvM_LR_to_show}.

Recall that $\Pi(n)\propto n^{-\alpha}$ for some $\alpha>1.$ Because of $n_k
\leq \lambda \sqrt{k}/\log^6(k),$ we have $|n/n_k-1| =o(1)$ and
\begin{align*}
    \log\left(\frac{\Pi(n)}{\Pi(n_k)}\right)=-\alpha
    \log\left(\frac{n}{n_k}\right) =o(1).
\end{align*}
for $n \in B_k$.  Applying Lemma \ref{lem.TV_comp} with 
\begin{equation*}
    q_n=\exp\bigg(-\frac{kp_k^2}{4n_k^2} \big(n-\wh
    n\big)^2+V'\bigg)\mathbf{1}(n \in B_k)
\end{equation*}
then shows that $\TV(\Pi_1,Q_1)\to 0$ uniformly over $(n_k,p_k) \in
\mathcal{M}_k(\lambda)$ and $\data \in \mathcal{X}_k.$ Therefore, we have
established \eqref{eq.BvM_TV_to_show} and the proof is complete.
\end{proof}

\begin{lem}[Lemma E.3 in \cite{2018arXiv180904140R}]\label{lem.TV_scaling}
Consider two discrete distributions $P,Q.$ If for some $\alpha>0$ and for some
$\delta \in (0,1),$ $\sum_i|\alpha P(X=i)-Q(X=i)|\leq \delta,$ then
$$\TV(P,Q)\leq \frac{\delta}{1-\delta}.$$
\end{lem}

\begin{lem}\label{lem.TV_two_discretization_bd}
There exists a universal constant $K$ such that for all $\mu$ and all
sufficiently large $\sigma,$
\begin{align}
    \TV\big(\mathcal{N}_D(\mu,\sigma^2),\mathcal{N}_d(\mu,\sigma^2)\big)
    \leq \frac{K}{\sigma}.
\end{align}
\end{lem}

\begin{proof}
We will apply Lemma \ref{lem.TV_scaling}. Recall that $1-x\leq e^{-x}$ and thus
$|1-e^{-x}|=|x|$ for all $x>0.$ By the mean value theorem, there exists for all
$j$ a $\xi_j\in [j-1/2,j+1/2],$ such that 
\begin{align*}
    &\Big|e^{-\frac{(j-\mu)^2}{2\sigma^2}}- \int_{j-1/2}^{j+1/2} 
    e^{-\frac{(x-\mu)^2}{2\sigma^2}} \, \mathrm{d}x \Big| \\
    &= 
    \Big|e^{-\frac{(j-\mu)^2}{2\sigma^2}}- e^{-\frac{(\xi_j-\mu)^2}{2\sigma^2}}
    \Big| \\
    &=\max\Big( e^{-\frac{(j-\mu)^2}{2\sigma^2}},
    e^{-\frac{(\xi_j-\mu)^2}{2\sigma^2}}\Big)
    \Big| \frac{(j-\mu)^2}{2\sigma^2}
    - \frac{(\xi_j-\mu)^2}{2\sigma^2}\Big| \\
    &\leq \frac{|j-\mu|+1/4}{2\sigma^2}
    \max\Big( e^{-\frac{(j-\mu)^2}{2\sigma^2}},
    e^{-\frac{(\xi_j-\mu)^2}{2\sigma^2}}\Big).
\end{align*}
Let $c=c(\mu,\sigma^2)$ be the normalizing factor of the
$\mathcal{N}_d(\mu,\sigma^2),$ distribution, that is,
$P(X=j)=c\exp(-(j-\mu)^2/2\sigma^2)$ for all integer $j.$ With
$\alpha=1/(c\sqrt{2\pi \sigma^2})$ and the bound from the previous display, some
tedious calculations show that there exists a universal constant $C$ such that
\begin{align*}
    &\sum_j \big|\alpha P(X=j) - \frac{1}{\sqrt{2\pi \sigma^2}} \int_{j-1/2}^{j+1/2} 
    e^{-\frac{(x-\mu)^2}{2\sigma^2}} \, \mathrm{d}x \Big| \\
    &\leq \sum_j \frac{|j-\mu|+1/4}{\sqrt{8\pi}\sigma^3}
    \max\Big( e^{-\frac{(j-\mu)^2}{2\sigma^2}},
    e^{-\frac{(\xi_j-\mu)^2}{2\sigma^2}}\Big) \\
    &\leq \frac{C}{\sigma}.
\end{align*}
Applying Lemma \ref{lem.TV_scaling} yields the result.
\end{proof}

\clearpage

\section{Additional Simulations}\label{app:simulations}
The following graphs are supplements to Figure~\ref{fig:posterior-hist} and
\ref{fig:comparison} of the main article. They show simulations with different
parameter configurations, which have been omitted in the main text.

\vspace{1cm}
\begin{figure}[h!]
  \includegraphics[width=\linewidth]{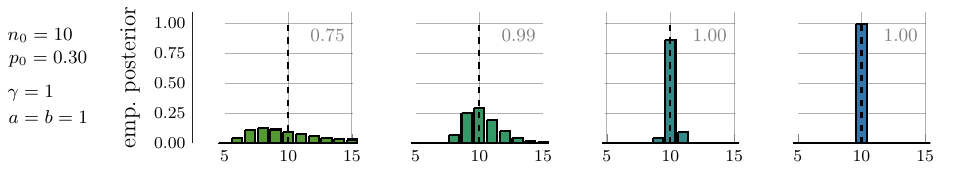}
  \includegraphics[width=\linewidth]{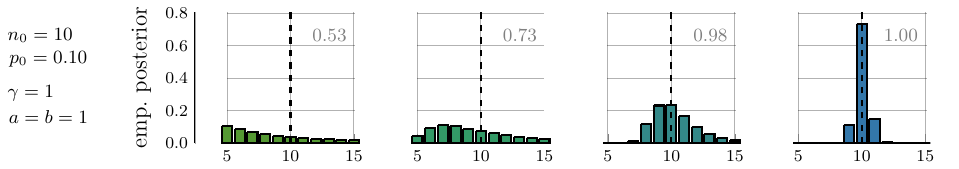}
  \includegraphics[width=\linewidth]{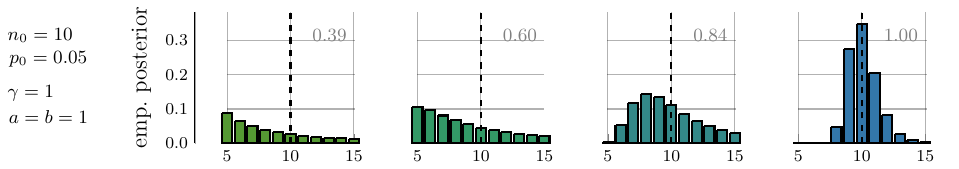}
  \vspace{0.1cm}

  \includegraphics[width=\linewidth]{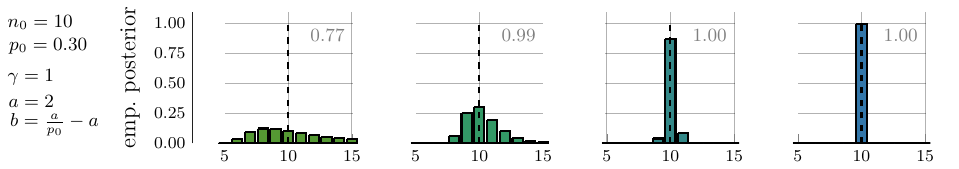}
  \includegraphics[width=\linewidth]{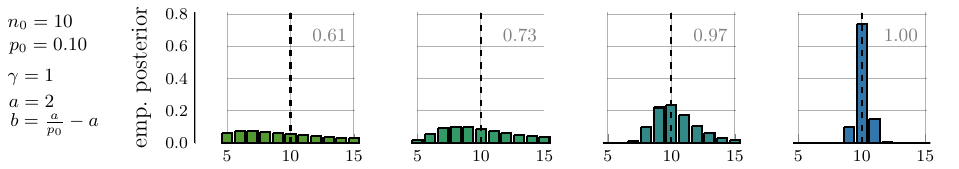}
  \includegraphics[width=\linewidth]{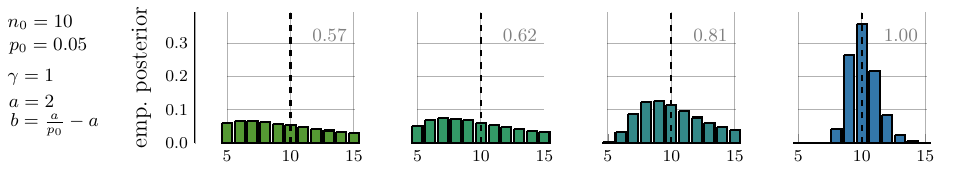}
  \caption{Posterior distributions with true parameter $n_0 = 10$ (supplement to
  Figure~\ref{fig:posterior-hist}).}
  \label{fig:app:posterior_10}
\end{figure}

\begin{figure}
  \includegraphics[width=\linewidth]{posterior/noinfo_20_0.30}
  \includegraphics[width=\linewidth]{posterior/noinfo_20_0.10}
  \includegraphics[width=\linewidth]{posterior/noinfo_20_0.05}
  \vspace{0.1cm}

  \includegraphics[width=\linewidth]{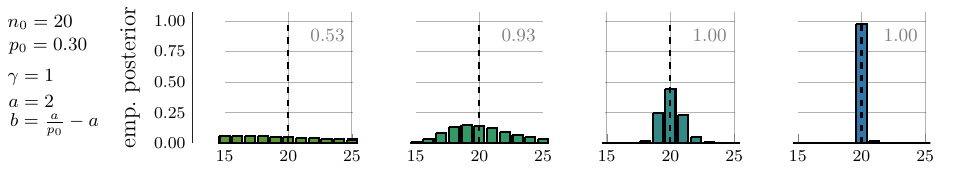}
  \includegraphics[width=\linewidth]{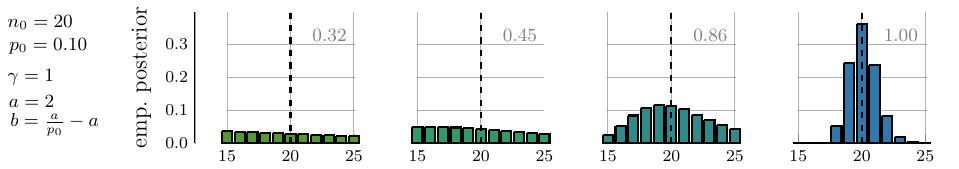}
  \includegraphics[width=\linewidth]{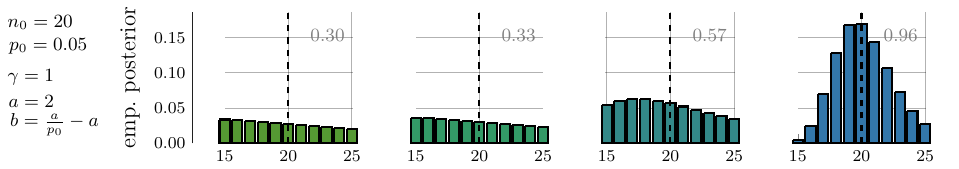}
  \caption{Posterior distributions with true parameter $n_0 = 20$ (supplement to
  Figure~\ref{fig:posterior-hist}).}
  \label{fig:app:posterior_20}
\end{figure}

\begin{figure}
  \includegraphics[width=\linewidth]{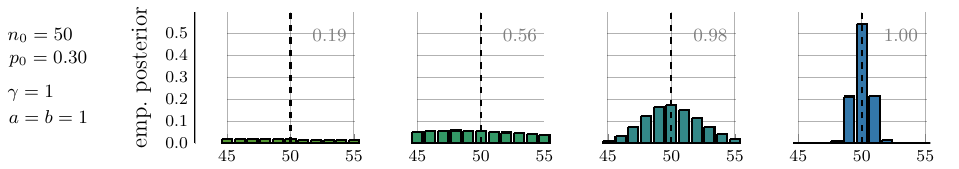}
  \includegraphics[width=\linewidth]{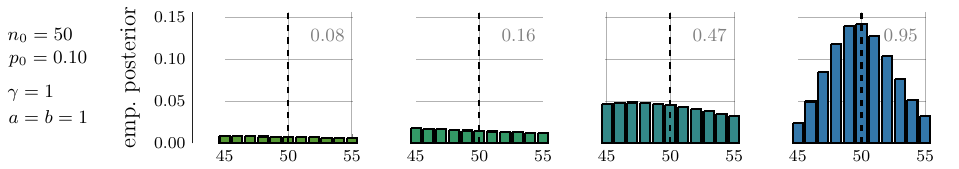}
  \includegraphics[width=\linewidth]{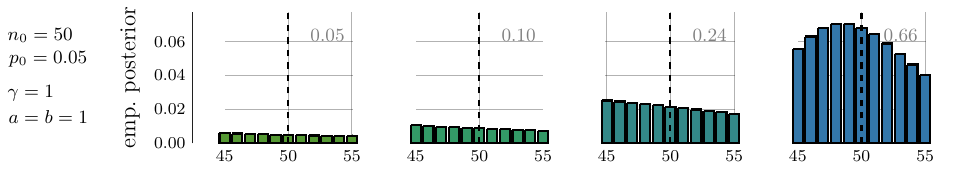}
  \vspace{0.1cm}

  \includegraphics[width=\linewidth]{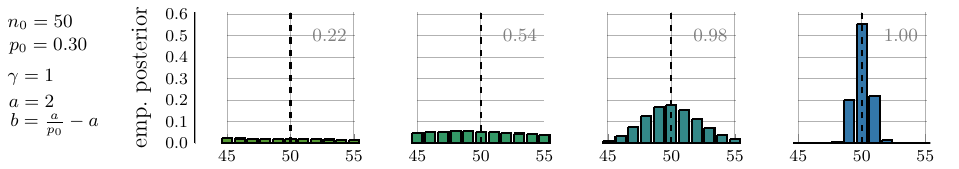}
  \includegraphics[width=\linewidth]{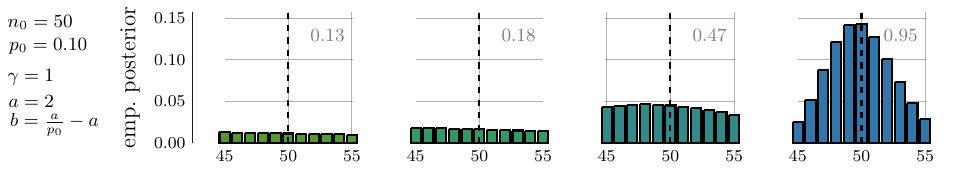}
  \includegraphics[width=\linewidth]{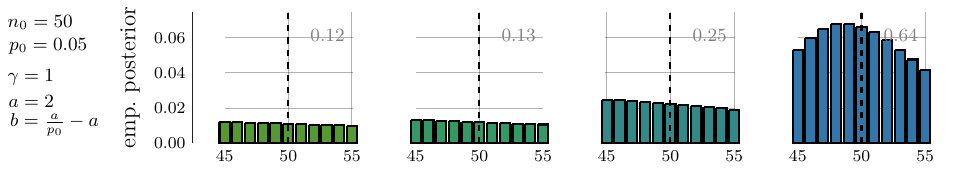}
  \caption{Posterior distributions with true parameter $n_0 = 50$ (supplement to
  Figure~\ref{fig:posterior-hist}).}
  \label{fig:app:posterior_50}
\end{figure}

\begin{figure}
  {\scriptsize $a = b = 1$}
  \includegraphics[width=\linewidth]{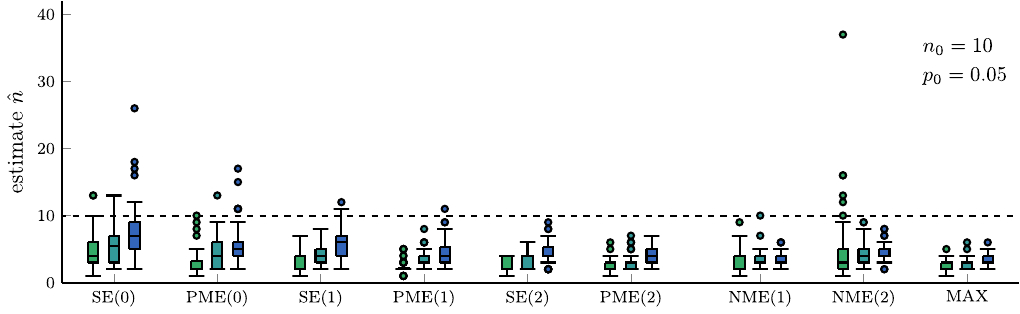}
  \includegraphics[width=\linewidth]{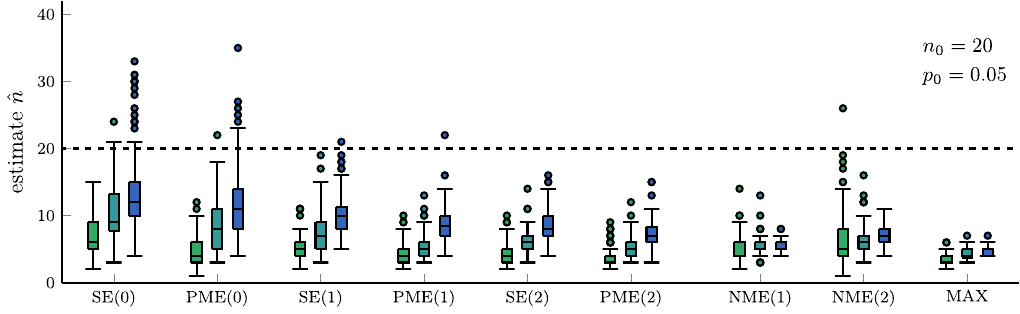}
  \includegraphics[width=\linewidth]{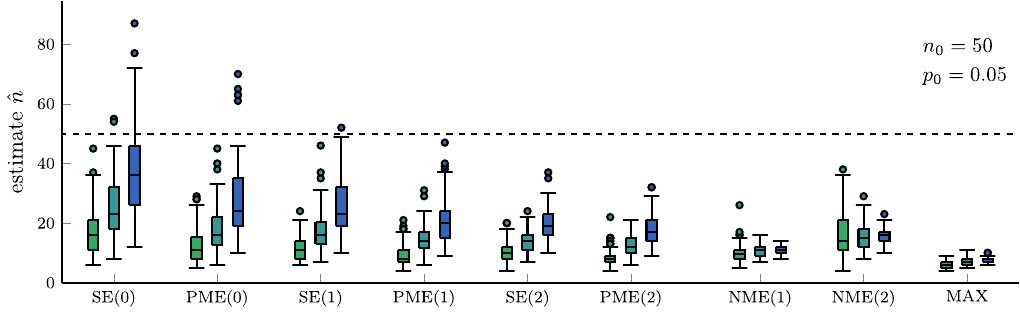}
  \includegraphics[width=\linewidth]{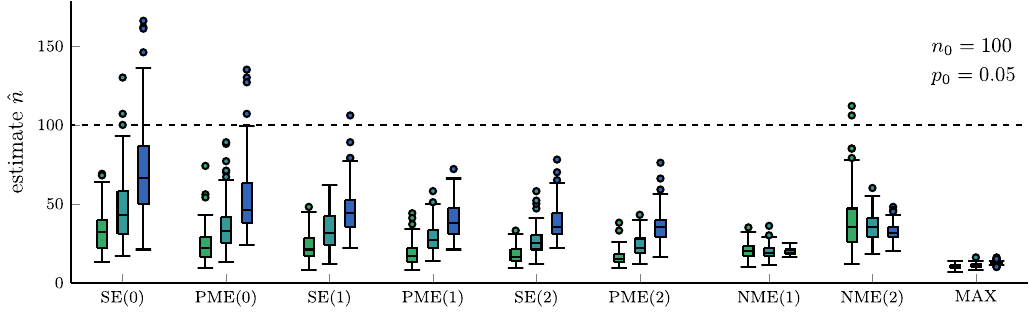}
  \includegraphics[width=\linewidth]{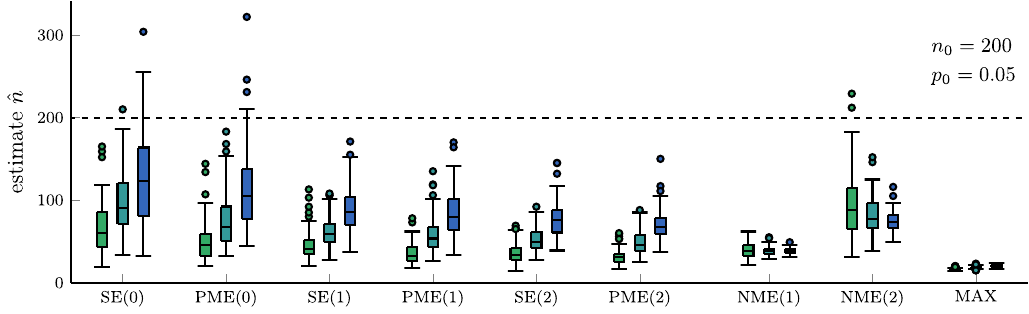}
  \caption{Box plots for $p_0 = 0.05$ and uniform prior on $p$ (supplement to
  Figure~\ref{fig:comparison}).}
  \label{fig:app:comparison_noinfo_0.05}
\end{figure}

\begin{figure}
  {\scriptsize $a = b = 1$}
  \includegraphics[width=\linewidth]{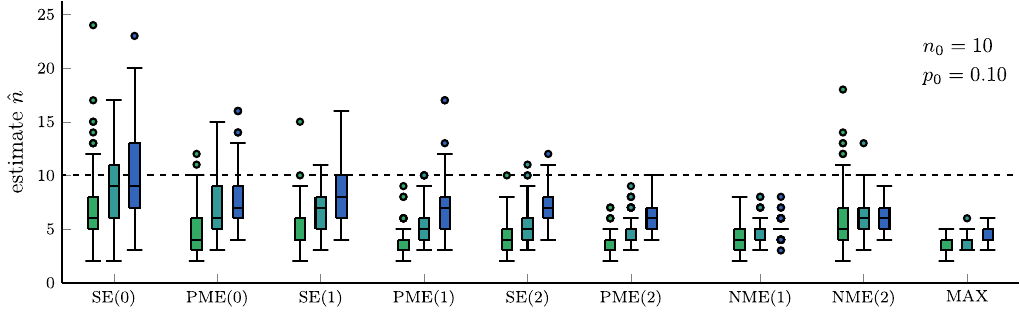}
  \includegraphics[width=\linewidth]{comparison/noinfo_20_0.10}
  \includegraphics[width=\linewidth]{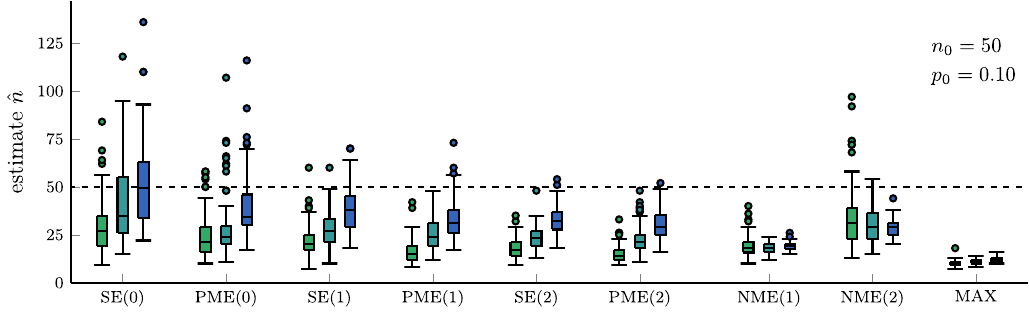}
  \includegraphics[width=\linewidth]{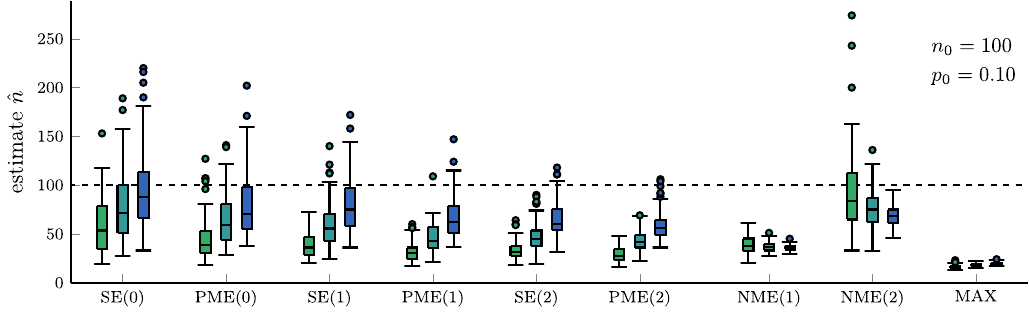}
  \includegraphics[width=\linewidth]{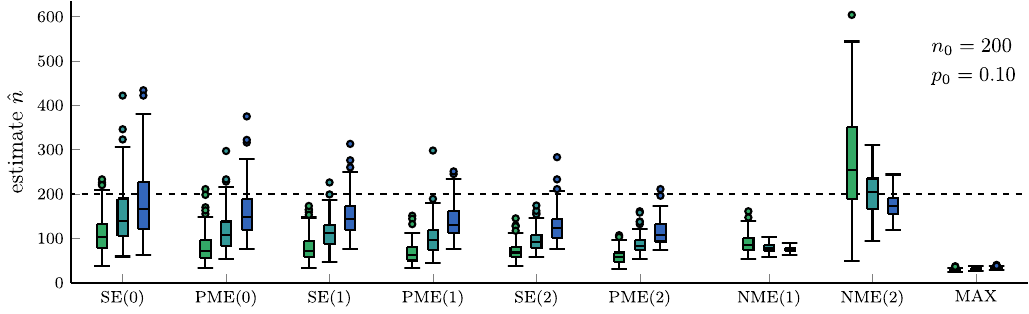}
  \caption{Box plots for $p_0 = 0.1$ and uniform prior on $p$ (supplement to
  Figure~\ref{fig:comparison}).}
  \label{fig:app:comparison_noinfo_0.1}
\end{figure}

\begin{figure}
  {\scriptsize $a = b = 1$}
  \includegraphics[width=\linewidth]{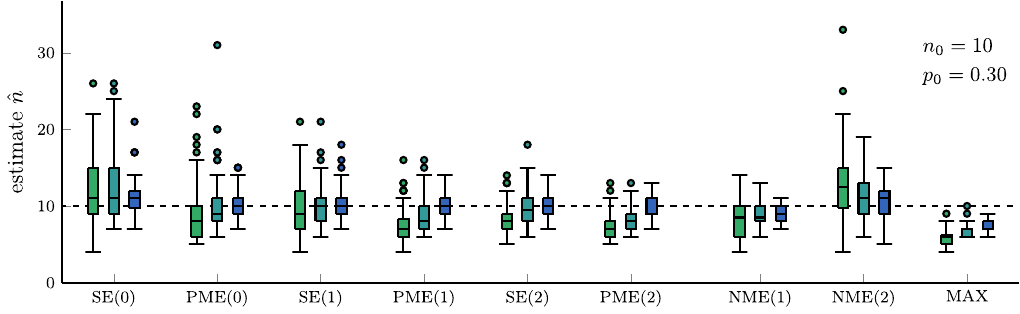}
  \includegraphics[width=\linewidth]{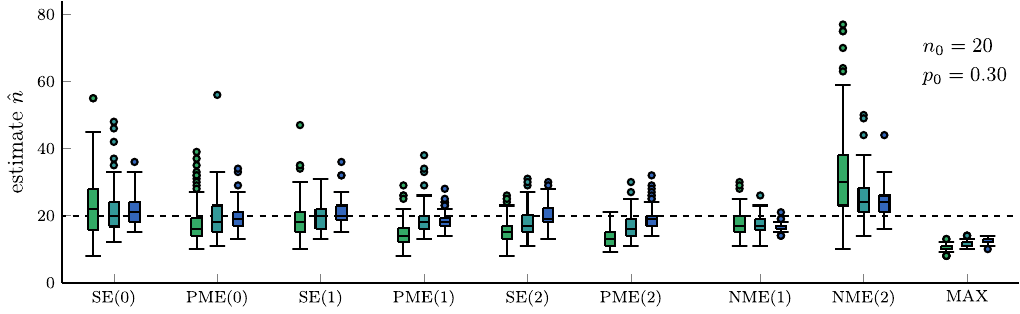}
  \includegraphics[width=\linewidth]{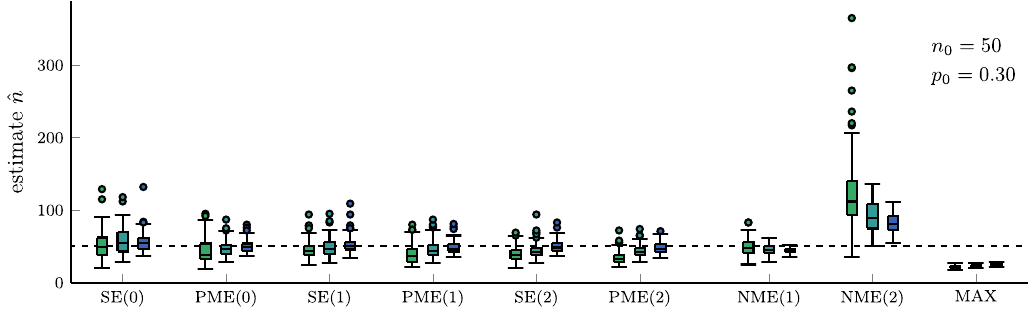}
  \includegraphics[width=\linewidth]{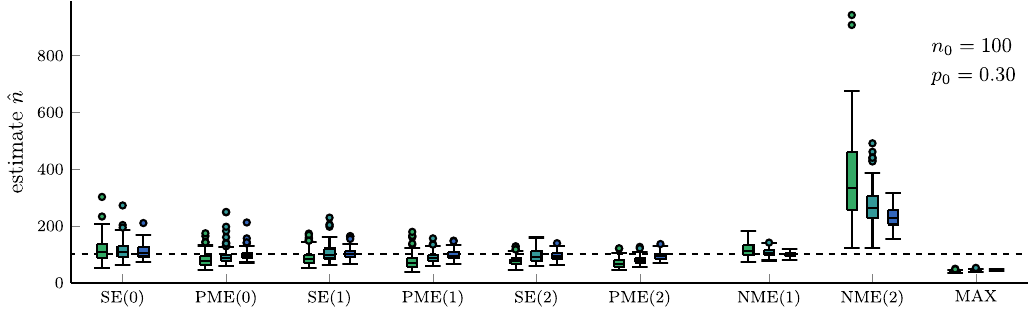}
  \includegraphics[width=\linewidth]{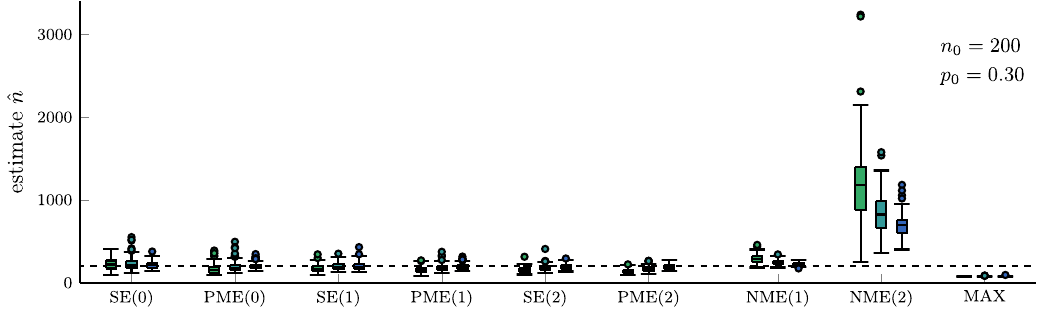}
  \caption{Box plots for $p_0 = 0.3$ and uniform prior on $p$ (supplement to
  Figure~\ref{fig:comparison}).}
  \label{fig:app:comparison_noinfo_0.3}
\end{figure}

\begin{figure}
  {\scriptsize $a = 2$, $b = a/p_0 - a$}
  \includegraphics[width=\linewidth]{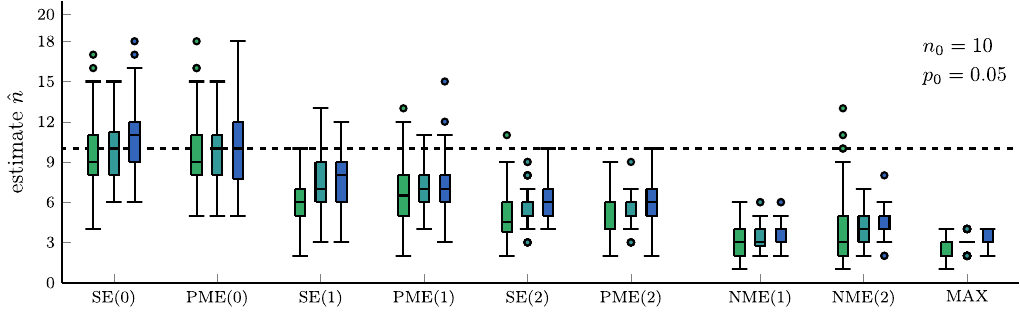}
  \includegraphics[width=\linewidth]{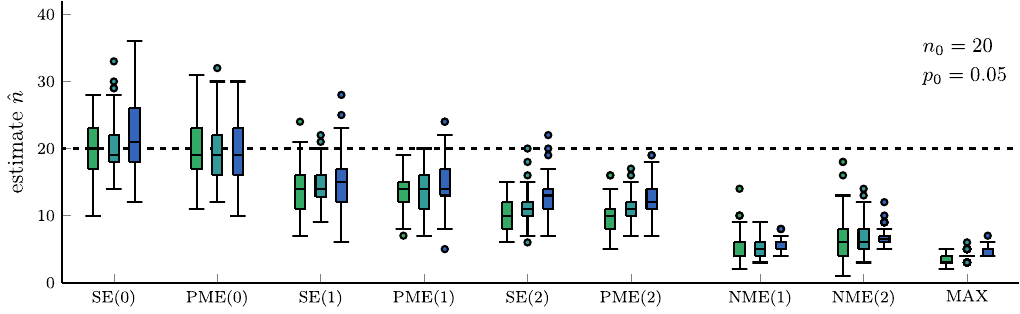}
  \includegraphics[width=\linewidth]{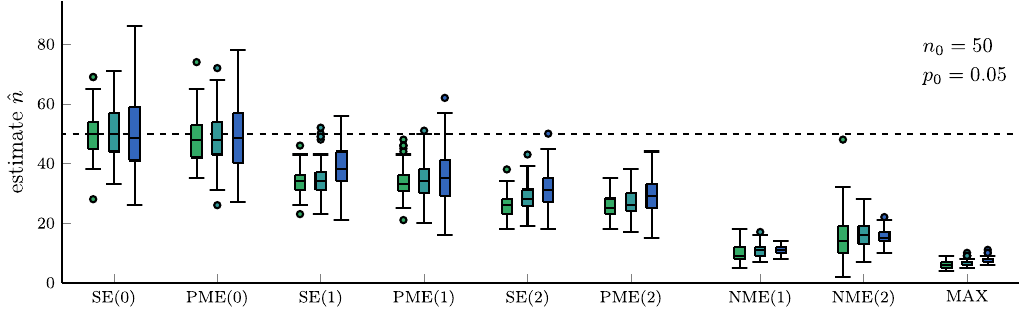}
  \includegraphics[width=\linewidth]{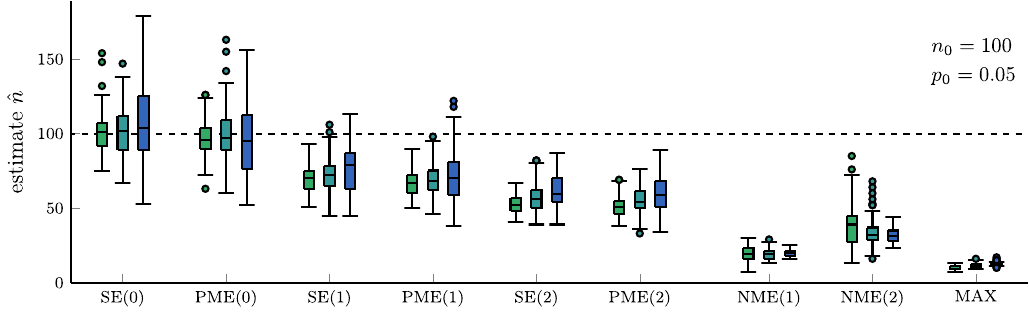}
  \includegraphics[width=\linewidth]{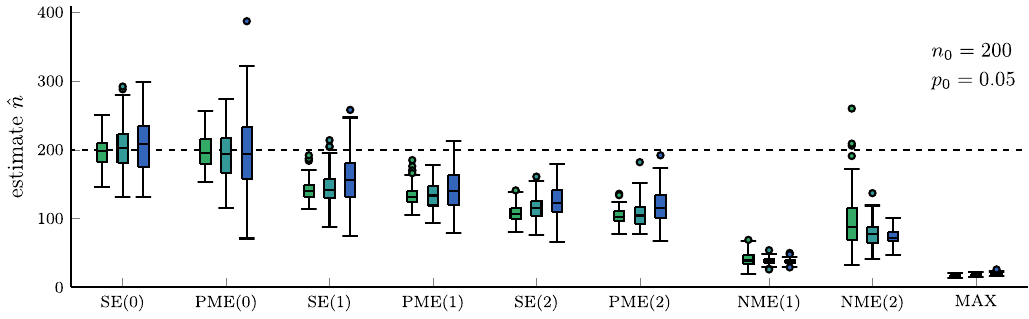}
  \caption{Box plots for $p_0 = 0.05$, $a=2$ and $b=2/p_0-2$ (supplement to
  Figure~\ref{fig:comparison}).}
\end{figure}

\begin{figure}
  {\scriptsize $a = 2$, $b = a/p_0 - a$}
  \includegraphics[width=\linewidth]{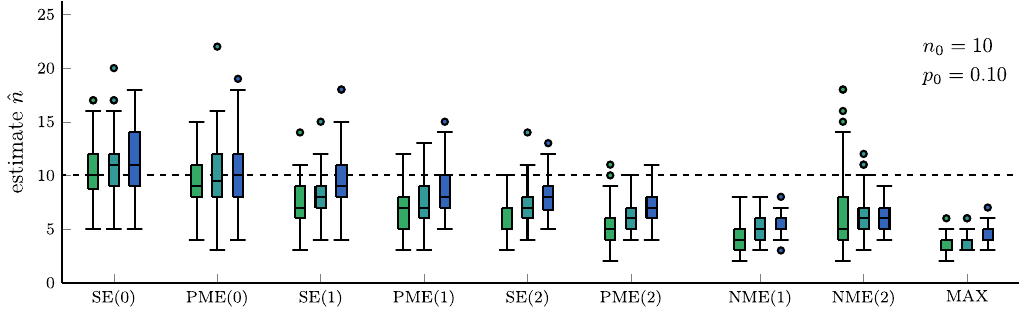}
  \includegraphics[width=\linewidth]{comparison/info_20_0.10}
  \includegraphics[width=\linewidth]{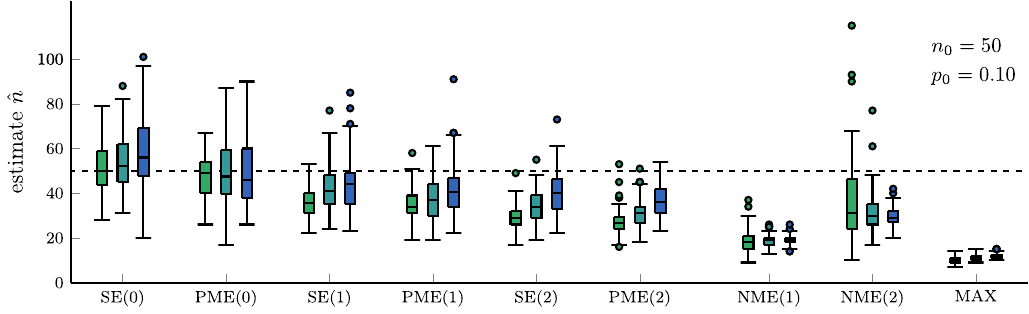}
  \includegraphics[width=\linewidth]{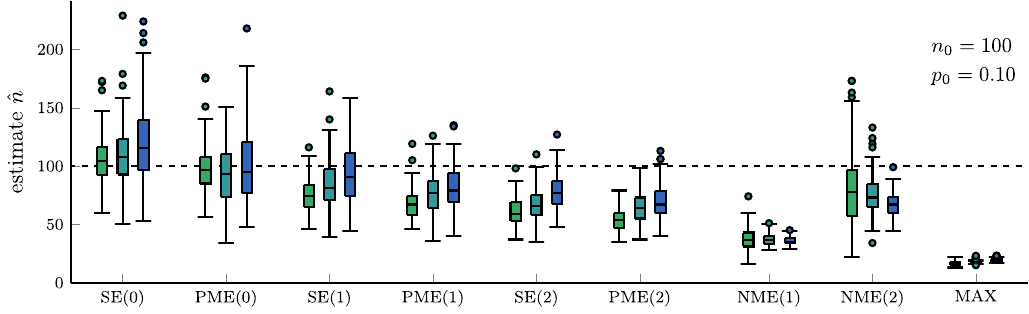}
  \includegraphics[width=\linewidth]{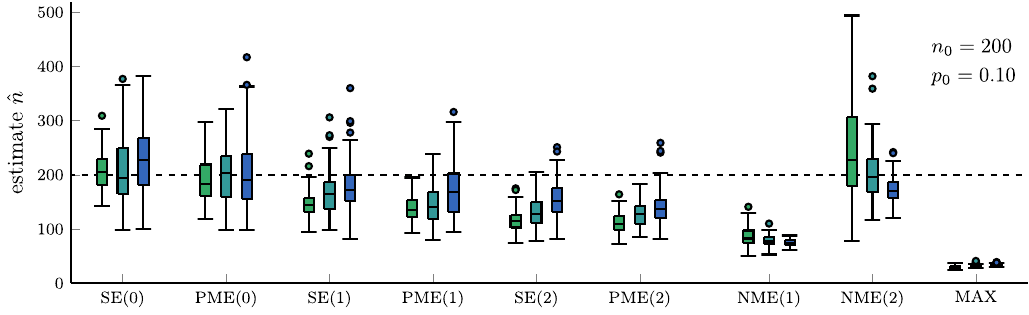}
  \caption{Box plots for $p_0 = 0.1$, $a=2$ and $b=2/p_0-2$ (supplement to
  Figure~\ref{fig:comparison}).}
\end{figure}

\begin{figure}
  {\scriptsize $a = 2$, $b = a/p_0 - a$}
  \includegraphics[width=\linewidth]{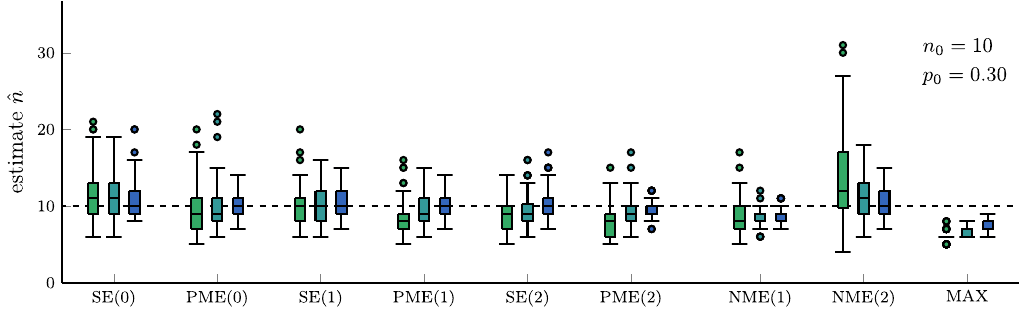}
  \includegraphics[width=\linewidth]{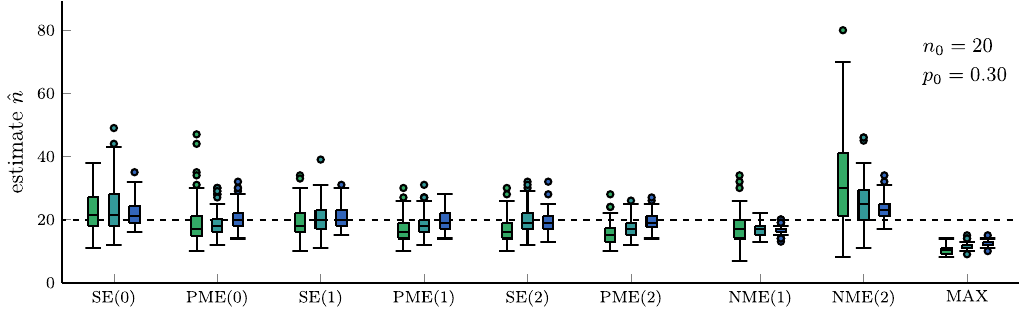}
  \includegraphics[width=\linewidth]{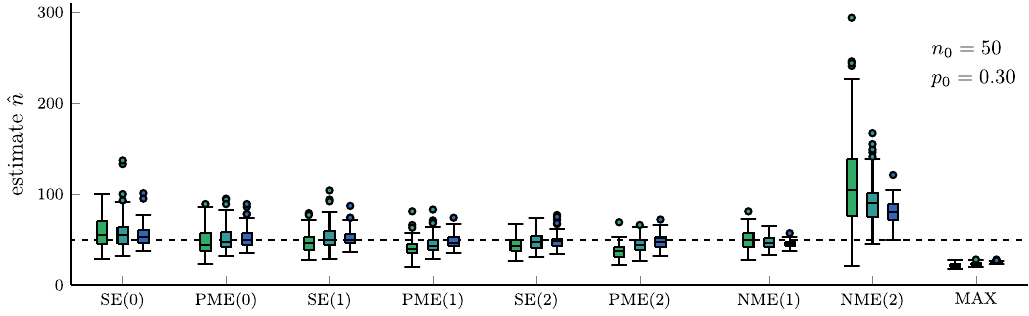}
  \includegraphics[width=\linewidth]{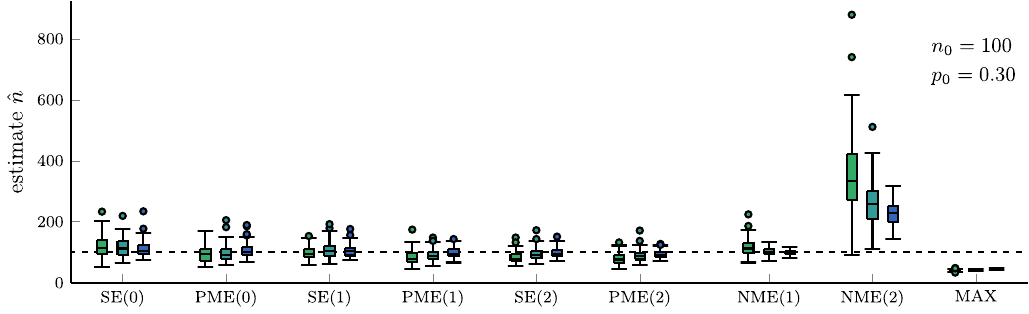}
  \includegraphics[width=\linewidth]{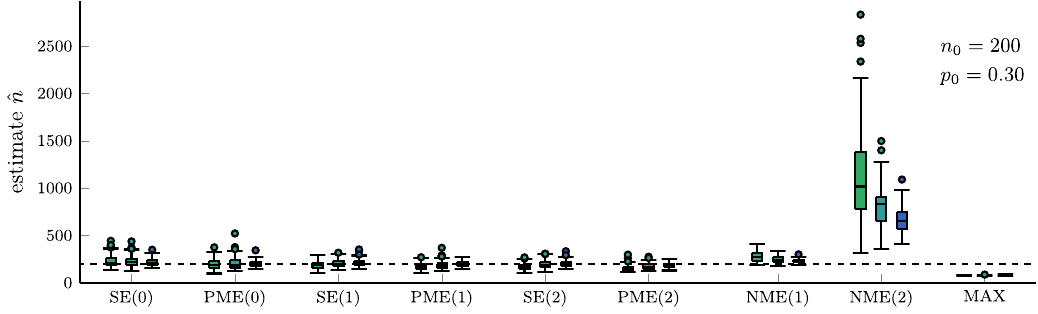}
  \caption{Box plots for $p_0 = 0.3$, $a=2$ and $b=2/p_0-2$ (supplement to
  Figure~\ref{fig:comparison}).}
\end{figure}

\end{document}